\definecolor{darkgreen}{rgb}{0,0.5,0}
\newtheorem{theorem}{Theorem}[section]
\newtheorem{lemma}[theorem]{Lemma}
\newtheorem{corollary}[theorem]{Corollary}
\theoremstyle{definition}
\newtheorem{definition}[theorem]{Definition}
\newtheorem{example}[theorem]{Example}
\newtheorem{remark}[theorem]{Remark}
\newtheorem{conjecture}[theorem]{Conjecture}
\newcommand{\inv}{\iota}
\def\Inv{\mathfrak{I}}
\def\spinc {{\operatorname{spin^c}}}
\def\s{\mathfrak s}
\def\x{\mathbf{x}}
\def\CF {\mathit{CF}}
\def\HF {\mathit{HF}}
\newcommand \CFm {\CF^-}
\newcommand \Hc {\HF_{\mathrm{conn}}}
\def\ff {{\mathbb{F}}}
\newcommand{\N}{\mathbb{N}}
\newcommand{\Z}{\mathbb{Z}}
\newcommand{\Q}{\mathbb{Q}}
\newcommand{\F}{\mathbb{F}}
\let\int\relax
\newcommand{\int}{\mathring}
\DeclareMathSymbol{\wtilde}{\mathord}{largesymbols}{"65}
\mathchardef\mhyphen="2D
\newcommand{\CFK}{\mathit{CFK}}
\newcommand{\gr}{\text{gr}}
\newcommand{\id}{\mathrm{id}}
\newcommand{\Wh}{D}
\newcommand{\Wht}{D_{t}}
\newcommand{\ima}{\mathrm{im}}
\newcommand{\cC}{\mathcal{C}}
\newcommand{\tmu}{\smash{\widetilde{\mu}}}
\newcommand{\lk}{\mathit{lk}}
\title{Rank-expanding satellites, Whitehead doubles, and Heegaard Floer homology}
\author[I. Dai]{Irving Dai}
\address {Department of Mathematics, Stanford University, Palo Alto, CA 94301}
\email{ifdai@stanford.edu}
\author[M. Hedden]{Matthew Hedden}
\address {Department of Mathematics, Michigan State University, East Lansing, MI 48824}
\email{heddenma@msu.edu}
\author[A. Mallick]{Abhishek Mallick}
\address{Simons Laufer Mathematical Sciences Institute (previously MSRI), Berkeley}
\email{abhishek.mallick@rutgers.edu}
\author[M. Stoffregen]{Matthew Stoffregen}
\address {Department of Mathematics, Michigan State University, East Lansing, MI 48824}
\email{stoffre1@msu.edu}
\begin{document}
\vspace*{-1cm}
\maketitle
\vspace*{-0.4cm}
\begin{abstract}
We show that a large class of satellite operators are rank-expanding; that is, they map some rank-one subgroup of the concordance group onto an infinite linearly independent set. Our work constitutes the first systematic study of this property in the literature and partially affirms a conjecture of the second author and Pinz\'on-Caicedo. More generally, we establish a Floer-theoretic condition for a family of companion knots to have infinite-rank image under satellites from this class. The methods we use are amenable to patterns which act trivially in topological concordance and are capable of handling a surprisingly wide variety of companions. For instance, we give an infinite linearly independent family of Whitehead doubles whose companion knots all have negative $\tau$-invariant. Our also results recover and extend several theorems in this area established using instanton Floer homology. 
\end{abstract}

\section{Introduction}\label{sec:1}
For any pattern knot $P \subset S^1 \times D^{2}$, the  satellite operation $K \mapsto P(K)$ induces a map 
\[
P: \mathcal{C} \rightarrow \mathcal{C}
\]
on the smooth (or topological) knot concordance group.   These operators have been central to the study of the concordance groups in both categories; for example, see \cite{cochranteichner,cochranorr,livingston-boundary,fractal,2torsionsolvable,HKL,HLR,CochranHarveyHorn,grope,Hominfiniterank,Levinenonsurj,injectivity,amphicheiral,reverses,MR780587,davisray,Chen11patterns,HK,PC,daemi2020chern,HPC,NST,MillerHomo,lidman2022linking,DISST}.  In this article, we investigate several questions regarding the rank of different satellite operators on the smooth concordance group.  The starting point for this line of research is the following conjecture, due to the second author and Pinz\'on-Caicedo \cite{HPC}:

\begin{conjecture}\cite[Conjecture 2]{HPC}\label{conj:1.1}
Every non-constant satellite operator has infinite rank.
\end{conjecture}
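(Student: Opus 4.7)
The plan is to approach Conjecture~\ref{conj:1.1} through involutive Heegaard Floer homology, since the conjecture is most interesting precisely for patterns (such as Whitehead doubles) that act trivially in the topological category, where classical obstructions are unavailable. The natural target group is the local equivalence group of $\iota_K$-complexes, which receives a homomorphism from $\mathcal{C}$ and is refined enough to detect subtle differences in $P(K)$ as $K$ varies, while still being computable.

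First, I would not attempt the conjecture in full generality; instead, I would aim to show that any non-constant pattern $P$ from a specified class (e.g., winding number zero patterns satisfying a mild non-triviality condition, or cables) maps some rank-one subgroup $\langle K_0 \rangle \subset \mathcal{C}$ to an infinite linearly independent set. Thus the goal becomes producing a single companion $K_0$ and showing $\{P(n K_0)\}_{n \in \mathbb{N}}$ spans a $\mathbb{Z}^\infty$ subgroup of $\mathcal{C}$. Second, I would establish a structural computation of the pair $(\CFK^\infty(P(K)), \iota_{P(K)})$ in terms of $(\CFK^\infty(K), \iota_K)$ and the pattern data. For satellites arising from surgery descriptions (cables, twisted doubles, and more generally satellites built from a link in $S^3$), this can be done via Heegaard triples and the link surgery formula; the important point is to track not only the filtered complex but also the induced $\iota$-action on it, so that one obtains an invariant in the local equivalence category.

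Third, with such a formula in hand, I would choose $K_0$ so that its $\iota_{K_0}$-complex exhibits a specific ``stair-type'' feature (for instance, a nontrivial contribution to $\overline{V}_0^\tau$ or a particular torsion tower) that persists, in a controlled way, into $(\CFK^\infty(P(K_0)), \iota_{P(K_0)})$. Linear independence of $\{P(n K_0)\}$ would then be proved by exhibiting, for each $n$, a numerical local-equivalence invariant (an involutive analogue of $V_0$, $\underline{V}_0$, $\overline{V}_0$, etc.) that separates the $n$-th connected sum from all $\mathbb{Z}$-linear combinations of the others. The standard way to do this is to compare leading-order behavior of these invariants on $P(n K_0)$ versus on $\#_{i} a_i P(K_0)^{\epsilon_i}$, arranging that the growth rates in $n$ are all distinct.

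The main obstacle is the passage from knot-level Floer data to $\iota$-equivariant satellite data. Many satellite patterns are severely lossy at the level of $\CFK^\infty$ alone --- Whitehead doubles in particular collapse much of the filtered complex --- so the whole argument depends on showing that the refined involutive structure retains enough information about $K_0$ to survive the pattern. Concretely, the technical heart will be identifying the induced $\iota$-action on the satellite complex and proving it is nontrivial in a way that scales correctly under connected sums; once this is in place, the linear independence step is a routine but careful computation with local equivalences.
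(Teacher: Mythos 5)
This statement is a conjecture, which the paper does not prove; what the paper does is verify the stronger Conjecture~\ref{conj:1.2} (and hence Conjecture~\ref{conj:1.1}) for a particular class of patterns, namely proper rational unknotting number one patterns with nonzero linking number (Theorem~\ref{thm:1.7}). So the appropriate comparison is between your plan and the paper's proof of Theorem~\ref{thm:1.7}/\ref{thm:1.9}, which you rightly set up as the realistic target.

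Your proposal shares the overall philosophy (involutive Heegaard Floer homology, local equivalence, linear independence of $\{P(nK_0)\}$), but the technical route is genuinely different, and I think there is a gap at its core. You propose computing the pair $(\CFK^\infty(P(K)),\iota_{P(K)})$ directly in terms of $(\CFK^\infty(K),\iota_K)$ and the pattern, and then extracting numerical local-equivalence invariants of that $\iota_K$-complex. The difficulty is that no such satellite formula for the knot-level involution currently exists, and there is good reason to doubt that this route can work even in principle for the key examples: the paper emphasizes that for Whitehead doubles with $\tau(K)\leq 0$ the \emph{stable equivalence class} of $\CFK^\infty(D(K))$ is already trivial (by Hedden, Hom, and Sato), so the filtered complex of $D(K)$ has essentially lost all information about $K$ before the involution is even considered. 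Your plan rests on the hope that $\iota_{D(K)}$ nonetheless remembers $K$, but you give no mechanism for this, and it is precisely the point where the naive knot-level approach has historically failed.

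The paper sidesteps this by never touching $\CFK^\infty(P(K))$ at all. It instead passes to branched double covers: for a rational unknotting number one pattern one has $\Sigma_2(P(K))\cong S^3_{p/q}(J_{K,\tmu})$ for a double infection $J_{K,\tmu}$, and the relevant involutive invariant is that of this \emph{3-manifold}, computed via the HHSZ involutive surgery formula (Theorem~\ref{thm:2.13}). The extraction of information about $K$ then happens through a negative-definite cobordism argument (Section~\ref{sec:3.2}) that bounds $V_0(J_{K,\tmu})$ below by $V_0(K)-V_0(-K)$ plus a constant, using the linking-number hypothesis $\ell\neq 0$. The surgery-formula tools you gesture at (``Heegaard triples and the link surgery formula'') are indeed the right tools, but in the paper they are deployed downstairs on $\Sigma_2(P(K))$, not on the satellite knot itself. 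If you want to rescue your proposal, the fix is to reroute step two through the Montesinos trick and the branched cover, replacing the hypothetical knot-level satellite formula with the existing 3-manifold surgery formula; otherwise the plan stalls at the step you yourself identify as ``the main obstacle,'' with no concrete way past it.
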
 

Here, by the rank of $P$ we mean the rank of the subgroup generated by the image of $P$, since in general $P$ is not a homomorphism. Significant progress towards Conjecture~\ref{conj:1.1} was made in \cite[Theorem 3]{HPC}, where it was verified for all winding number zero patterns satisfying a certain rational linking number condition.\footnote{The proof in the case of non-zero winding number is straightforward and follows from a consideration of Tristram-Levine signatures; see \cite[Proposition 8]{HPC}.} Specifically, it was shown that any such pattern maps a carefully-selected sequence of torus knots to an infinite linearly independent set. Other research has focused on establishing the linear independence of explicit families of knots under patterns such as Whitehead doubling. For instance, in joint work with Kirk, the second author proved the Whitehead doubles $\{\Wh(T_{2, 2^k-1})\}_{k \geq 2}$ are linearly independent \cite[Theorem 1]{HK}; this was extended to the entire family $\{\Wh(T_{2, 2k+1})\}_{k \in \N}$ by Nozaki, Sato, and Taniguchi \cite[Corollary 1.13]{NST}. (See also \cite[Theorem 1.12]{NST}.) Linear independence of torus knots under other (Whitehead-like) satellites was studied by Pinz\'on-Caicedo in \cite{PC}.

Given the linear independence of torus knots in $\cC$, the above results should be thought of as examples of rank-preserving behavior for $P$. The existence of more exotic behavior was conjectured in \cite{HPC}, where the following strengthening of Conjecture~\ref{conj:1.1} was presented:

\begin{conjecture}\cite[Conjecture 4]{HPC}\label{conj:1.2} 
If $P$ is a non-constant winding number zero satellite operator, then there exists a knot $K$ for which $\{P(nK)\}_{n \in \Z}$ has infinite rank.
\end{conjecture}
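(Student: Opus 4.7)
The plan is to attack the conjecture by combining the paper's Floer-theoretic criterion for infinite-rank images (the main technical result announced in the abstract) with a careful choice of a single companion knot $K$ whose concordance multiples $nK$ form a family to which the criterion applies. Because the conjecture fixes $P$ first and then demands a single $K$ that serves simultaneously for all $n$, the argument cannot rely on varying the companion to tune Tristram--Levine signatures or $\tau$-type invariants as one does in the proof of Conjecture~\ref{conj:1.1}; instead one must extract growth in $n$ from the linear scaling of companion invariants under connected sum.

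First, I would try to reduce the problem to a verifiable statement about the knot Floer complex $\CFK^\infty(K)$: namely, that $\CFK^\infty(nK) \simeq \CFK^\infty(K)^{\otimes n}$, so that any additive-under-connected-sum concordance invariant which is nontrivial on $K$ grows linearly in $n$. Good test candidates are $K = T_{2,3}$ or a thin knot with $\tau(K) > 0$, whose tensor powers admit a clean description. I would then apply the satellite formula, or its involutive/equivariant enhancement in the style of the paper, to express $\CFK^\infty(P(nK))$ in terms of $\CFK^\infty(nK)$ together with bimodule data that depends only on $P$. The Floer-theoretic criterion would be invoked with the sequence $\cK_n = nK$; the criterion's numerical hypothesis should then reduce to showing that an appropriate equivariant numerical invariant (e.g.\ a refined $V_0$-type invariant sensitive to an involution coming from $P$) of $P(nK)$ grows without bound in $n$, which I would verify using the tensor-power structure of $nK$ together with the fixed pattern data.

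The main obstacle, and the reason Conjecture~\ref{conj:1.2} is so much harder than Conjecture~\ref{conj:1.1}, is that the Floer-theoretic criterion as it stands applies only when $P$ satisfies a nontriviality condition at the level of an equivariant Floer invariant: roughly, the pattern must contribute nontrivially to some refinement of knot Floer homology (for example, via a nonzero equivariant $\tau$-invariant of $P$). There are winding-number-zero patterns which act trivially on all currently available such refinements, and for these the criterion gives nothing. To cover Conjecture~\ref{conj:1.2} in full generality one would need either (a) a strengthening of the criterion so that \emph{every} nonconstant winding-zero $P$ is detected by some equivariant Floer invariant, or (b) a pattern-independent mechanism showing that if $\{P(nK)\}_{n\in\Z}$ were of bounded rank for every $K$, then $P$ would act trivially on a large subgroup of $\cC$ and hence, by some topological rigidity input, would have to be constant. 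Bridging this gap -- producing equivariant Floer data sensitive enough to detect every nonconstant winding-zero pattern simultaneously -- is where I expect the genuine difficulty to lie.
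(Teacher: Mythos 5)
The statement you are asked about is a conjecture quoted from [HPC]; the paper does not prove it in full generality, and neither do you. What the paper actually establishes is Theorem~\ref{thm:1.7}: the conjecture holds for every proper rational unknotting number one pattern with non-zero linking number (a class containing all multiply-clasped, twisted Whitehead doubles), along any $K$ with $V_0(nK)-V_0(-nK)\to\infty$. Your closing paragraph correctly diagnoses that a fully general proof is out of reach and that some nontriviality hypothesis on $P$ is needed; the paper's hypothesis is $\ell=\lk(J,\tmu)\neq 0$, and your assessment of where the difficulty lies is consistent with the paper's own framing.

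However, the mechanism you propose for the cases that \emph{can} be handled is the one the paper explicitly argues cannot work. You suggest expressing $\CFK^\infty(P(nK))$ via a satellite/bimodule formula in terms of $\CFK^\infty(nK)$ and detecting growth with an (equivariant) invariant of the satellite knot itself. For winding number zero patterns such as Whitehead doubles, the knot Floer package of $P(K)$ is too constrained for this: as discussed in Section~\ref{sec:1}, when $\tau(K)\le 0$ the stable equivalence class of $\CFK^\infty(\Wh(K))$ is trivial, and more generally $\tau$, $\Upsilon$, and stable equivalence of $P(nK)$ cannot establish linear independence. The paper's actual route bypasses the knot Floer homology of the satellites entirely: it passes to branched double covers via the Montesinos trick, identifying $\Sigma_2(P(nK))\cong S^3_{p/q}(J_{nK,\tmu})$ for a doubly-infected strongly invertible knot $J_{nK,\tmu}$; applies the involutive surgery formula of [HHSZ] to compare the resulting $\iota$-complex with the standard classes $X_i^\vee$, reducing everything to the growth of $V_0(J_{nK,\tmu})$; and then constructs an explicit negative-definite cobordism (Section~\ref{sec:3.2}, requiring $\ell\neq 0$) whose $d$-invariant inequality yields $V_0(J_{nK,\tmu})\ge V_0(nK)-V_0(-nK)+C$. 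The linear growth you want from the tensor-power structure of $nK$ enters only at this last step, through $V_0(nK)-V_0(-nK)\to\infty$ for, e.g., $K=T_{2,3}$. Without the branched-cover reduction, the surgery formula, and the definite cobordism, your plan has no way to extract unbounded invariants from $P(nK)$, so the central step of your outline is missing rather than merely unelaborated.
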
 

By the rank of $\{P(nK)\}_{n \in \Z}$, we again mean the rank of the subgroup generated by $\{P(nK)\}_{n \in \Z}$. Conjecture~\ref{conj:1.2} states that any nontrivial satellite operator sends some rank-one subgroup of $\cC$ surjectively onto an infinite linearly independent set. We formalize this in the following definition:

\begin{definition}\label{def:1.3}
A satellite operator $P$ is \textit{rank-expanding} if there exists a rank-one subgroup $\{nK\}_{n \in \Z}$ of $\cC$ such that $\{P(nK)\}_{n \in \Z}$ has infinite rank. When we wish to emphasize the knot $K$, we say that $P$ is rank-expanding \textit{along $\{nK\}_{n \in \Z}$} (or sometimes just \textit{along $K$}).\footnote{Note that implicitly, $K$ is required to be nontorsion in $\cC$. One can also define rank expansion by requiring that there is some finite-rank subgroup whose image under $P$ generates a subgroup of greater (but still possibly finite) rank; here, we have instead chosen the strongest possible notion. The authors briefly considered calling the operators of Definition~\ref{def:1.3} rank-\textit{exploding}.}
\end{definition}

Prior to the current article, little was known about Conjecture~\ref{conj:1.2}, even in specific cases. Indeed, in \cite{HPC} it was asked whether $\{P(nK)\}_{n \in \N}$ is linearly independent for $P$ the Whitehead double and $K$ the trefoil. We show:

\begin{corollary}\label{cor:A}
Let $\mathcal{F}$ be any subset of $\{\Wh(nT_{2, 2k+1})\}_{n, k \in \N}$ whose index pairs have distinct products $nk$. Then $\mathcal{F}$ is linearly independent and in fact spans a $\Z^\infty$-summand of $\cC$.
\end{corollary}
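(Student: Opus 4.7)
The plan is to derive Corollary~\ref{cor:A} as an application of the paper's main rank-expansion theorem, applied to the Whitehead pattern $\Wh$ and the family of companions $K_{n,k} := n\,T_{2,2k+1}$. That theorem (alluded to in the abstract and motivated in the introduction) should provide a Floer-theoretic sufficient condition under which, for a pattern $P$ acting trivially in topological concordance and a family of companion knots $\{J_i\}$ whose relevant Floer data pairwise differ, the satellite family $\{P(J_i)\}$ is linearly independent in $\cC$ and in fact spans a $\Z^\infty$-summand.

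First, I would verify the pattern hypothesis for $\Wh$: the Whitehead double has winding number zero and is topologically slice, but when applied to a knot of positive $\tau$ it produces a knot whose involutive knot Floer complex carries a nontrivial, companion-sensitive piece (essentially the content of Hedden--Kirk and NST translated into Heegaard Floer language). Second, I would verify the companion hypothesis for $K_{n,k}$. Since $\tau(T_{2,2k+1}) = k$ and $\tau$ is additive under connected sum, $\tau(K_{n,k}) = nk > 0$, taking distinct values exactly along subsets of index pairs with distinct products. The knot Floer complex $\CFK^-(K_{n,k})$ is the $n$-fold connected sum of the $k$-staircase complex for $T_{2,2k+1}$, whose involutive structure (and hence the relevant Floer data feeding into the main theorem) is completely explicit.

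For the $\Z^\infty$-summand conclusion, I would assemble a sequence of $\Z$-valued concordance homomorphisms --- of $\phi$-type, or built from involutive local and chain-local equivalence classes --- whose matrix of values on $\mathcal{F}$, after indexing by $m = nk$, is upper-triangular with nonzero diagonal entries. This is a standard packaging device once the main theorem produces sufficiently many distinct involutive local equivalence classes among the $\Wh(K_{n,k})$.

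The main obstacle is that the Whitehead pattern acts trivially on most classical smooth concordance invariants and also on many first-pass Heegaard Floer invariants, so the dependence of the relevant invariants of $\Wh(K_{n,k})$ on the product $nk$ is subtle and must be extracted from the involutive Heegaard Floer formalism. Pinning down this dependence --- in particular, controlling how the involutive behavior of $\Wh(K_{n,k})$ records the companion's $\tau$-value $nk$ cleanly enough to define a usable concordance homomorphism --- is where the essential technical work lies, and will be carried out in the body of the paper via the general satellite-theoretic machinery that Corollary~\ref{cor:A} is meant to illustrate.
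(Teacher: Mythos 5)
Your proposal is essentially the paper's argument: the paper's proof of Corollary~\ref{cor:A} is the one-line observation that $nT_{2,2k+1}$ is thin with $\tau(nT_{2,2k+1}) = nk$, followed by an appeal to Theorem~\ref{thm:1.10}(1) (applicable since $\Wh$ is a $1/q$-rational tangle pattern with $q$ even and the required lower bound on $\tau$ is zero). You correctly isolate the key numerical fact, and the extra apparatus you propose for the $\Z^\infty$-summand conclusion (a triangular matrix of $\phi$-type homomorphisms) is exactly what is already packaged inside Theorem~\ref{thm:1.10} via the homomorphisms of~\cite{DHSTcobord}, so you need not re-derive it.
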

\noindent
Setting $k = 1$ and varying $n$ yields the family $\mathcal{F} = \{n\Wh(T_{2,3})\}_{n \in \N}$. This answers the above question in the affirmative and (in particular) shows that $D$ is rank-expanding along $T_{2,3}$. Setting $n = 1$ and varying $k$ yields the family $\mathcal{F} = \{\Wh(T_{2, 2k+1})\}_{k \in \N}$, which recovers \cite[Corollary 1.13]{NST} (and thus \cite[Theorem 1]{HK}). Corollary~\ref{cor:A} is a consequence of a much more general result and can be extended to all multiply-clasped and twisted Whitehead doubling operators; see Theorem~\ref{thm:1.10}.

In fact, we verify Conjecture~\ref{conj:1.2} for many other patterns and families of companions in Theorem~\ref{thm:1.7}. The prevailing belief seems to be that nontrivial satellite operators are never homomorphisms (for example, see \cite{MillerHomo, Chen11patterns, lidman2022linking}). Our results indicate that they are, in some quantifiable sense, maximally far from being homomorphisms. Indeed, a potentially reasonable strengthening of Conjecture~\ref{conj:1.2} would be the following: 
\begin{conjecture}
Any non-constant winding number zero satellite operator is rank-expanding along \textit{every} rank-one subgroup $\{nK\}_{n \in \Z}$.
\end{conjecture}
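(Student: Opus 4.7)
The plan is to attempt an attack that emulates the strategy behind Theorem~\ref{thm:1.7} while removing any hypothesis on the companion beyond infinite order in $\cC$. The basic tool is involutive knot Floer homology, together with the satellite formula for winding number zero patterns, which expresses $\CFK(P(nK))$ in terms of $\CFK(nK) \simeq \CFK(K)^{\otimes n}$ and bordered data built from $P \subset S^1 \times D^2$. Since $P$ is non-constant and has winding number zero, this bordered data carries a nontrivial ``pattern contribution'' that cannot be replaced by a trivial one up to the relevant local-equivalence notion of $\iota$-complex.

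First, I would study the involutive correction-type invariants (such as $\bar{d}$, $\underline{d}$, and the $V^{\iota}_0$-type invariants) of large-surgery manifolds $S^3_N(P(nK))$ as $n$ varies. For the families treated in the paper, these invariants exhibit systematic growth or detectable periodicity that forces linear independence. In the general case one would want to prove a \emph{uniform lower bound} on the pattern's effect: namely, that the difference between the involutive complex of $P(nK)$ and the image of $nK$ under some easy-to-understand auxiliary pattern is bounded away from zero in the local-equivalence group, with this bound depending on $n$ in a detectable way. Such an estimate, applied along a cleverly chosen sequence $n_1 < n_2 < \cdots$, would directly produce an infinite linearly independent subset of $\{P(nK)\}_{n \in \Z}$.

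The next step is to isolate the contribution of $K$ from that of $P$. Because satellite operators are not homomorphisms, $P(nK)$ is not concordant to $nP(K)$; the discrepancy is governed by higher-order terms in the bordered pattern data tensored with copies of $\CFK(K)$. Using the non-triviality of $P$ at the bordered level together with the fact that $\CFK(K)^{\otimes n}$ grows without bound, one would attempt to show that these higher-order terms cannot simultaneously vanish for all $n$. If the pattern contribution can be shown to act as a kind of ``perturbation with increasing complexity'' on $\CFK(K)^{\otimes n}$, then any finite integral relation $\sum a_i P(n_i K) = 0$ in $\cC$ would force infinitely many cancellations in the involutive complex, contradicting the uniform lower bound.

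The main obstacle is handling companions $K$ whose knot Floer complex is Floer-theoretically ``invisible''---for example, a topologically slice knot of infinite order in $\cC$ that is detected only by gauge-theoretic invariants beyond Heegaard Floer, or a knot whose $\iota_K$-complex is locally trivial. In such cases the tensor-power arguments collapse, since $\CFK(K)^{\otimes n}$ is locally equivalent to the trivial complex for every $n$, and one loses the ability to see new Floer-theoretic directions inside $\{P(nK)\}$. Overcoming this seems to require either a strengthening of the involutive theory (for instance, refinements incorporating the equivariant $\tau_K$-action or bordered invariants sensitive to finer torsion data) or a genuinely new invariant that can detect the interaction between an arbitrary nontorsion companion and a non-constant winding number zero pattern. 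This gap is presumably the reason the statement remains only conjectural at present.
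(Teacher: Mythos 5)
This statement appears in the paper as an open conjecture with no proof supplied, and your final paragraph correctly recognizes this and names the essential obstruction (companions $K$ of infinite order in $\cC$ whose knot Floer complex is locally trivial). So your ``proof'' is, as you acknowledge, not a proof; the self-assessment is honest and accurate. The one substantive issue worth flagging is that the pipeline you sketch --- analyzing $\CFK(P(nK))$ directly via bordered/satellite formulas and involutive correction terms of large surgeries on $P(nK)$ itself --- is not the route the paper uses for its partial progress, and is in fact explicitly noted in the introduction to be a dead end. For winding number zero patterns, the knot Floer concordance data of the satellite is too degenerate: as the paper recalls via \cite{Hedden, Hom2017survey, Sato}, when $\tau(K) \leq 0$ the stable equivalence class of $\CFK(\Wh(K))$ is that of the unknot, so $\tau$, $\Upsilon$, stable/local equivalence, and any refinement built from $\CFK(K)^{\otimes n}$ fed through a bordered pattern module all vanish. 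You would never get off the ground along your proposed axis, even for the examples the paper does handle.

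What the paper actually does (Theorems~\ref{thm:1.7} and~\ref{thm:1.9}) is pass to branched double covers: by the Montesinos trick $\Sigma_2(P(K)) \cong S^3_{p/q}(J_{K,\tmu})$ for a doubly infected strongly invertible knot $J_{K,\tmu}$, and then the involutive surgery formula (Theorem~\ref{thm:2.13}) together with a negative-definite cobordism produces the $X_i^\vee$-type lower bounds, controlled by $V_0(nK) - V_0(-nK)$, needed for Theorem~\ref{thm:2.12}. Your identified obstruction is the right one, but it blocks the branched-cover route just as badly as yours: for $K$ with locally trivial $\CFK^\infty(K)$, one has $V_0(nK) = V_0(-nK) = 0$ for all $n$, so the hypothesis of Theorem~\ref{thm:1.7} is never satisfied and $\Sigma_2(P(nK))$ becomes invisible to all currently-computable Heegaard Floer invariants. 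In short: you located the right frontier, but attributed it to a strategy the paper discards rather than to the one it actually employs, and a correct comparison should make clear that both pipelines hit the same wall.
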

\noindent
It is thus natural to establish robust conditions which affirm rank expansion along different $K$.

The difficulty with studying the rank of satellite operators, especially for patterns such as Whitehead doubling, lies principally with a lack of effective invariants. For example, as discussed in \cite[Section 1]{HPC}, the knot Floer homology of Whitehead doubles is sufficiently constrained so that the usual suite of Floer-theoretic concordance invariants (such as $\tau$, $\Upsilon$, stable equivalence, and so on) cannot be used to establish linear independence.  The most common technique to date has been to pass to the branched double covers of these knots and utilize  homology cobordism invariants of the latter manifolds.  In the case that these manifolds have non-trivial first homology (when the determinant of the satellite knots is not one), there are a host of Fr{\o}shov-type invariants coming out of Floer theories, or analogous Casson-Gordon signatures available in the topological category.  Analyzing these invariants in conjunction with metabolizers for linking forms yields a powerful tool for studying satellite operators, and can  be used to show that certain operators whose image consists of satellite knots with non-zero determinant have infinite rank, and are even rank expanding. (For example, Chuck Livingston pointed out to the authors that Casson-Gordon invariants can verify that certain {\em twisted} Whitehead doubles are rank-expanding. See \cite{HLR,HKL,CochranHorn,CochranHarveyHorn} for related results using $d$-invariants.)  

However, when the determinant of the satellite knots is one, the branched double covers are homology spheres, and such techniques break down.  To date, the only method for bypassing this has been to employ the filtration on instanton Floer homology provided by the Chern-Simons functional.  Since instanton Floer homology is only well-understood for a small subset of 3-manifolds, this approach has only been used to study the images of very restricted families of companion knots, such as those closely related to torus knots \cite{HK, PC,daemi2020chern, HPC, NST} or certain twist knots \cite{NST} (see   \cite{DISST} for very recent results in this direction). In particular, although the instanton approach is well-suited to understanding $\{P(K_n)\}_{n \in \N}$ for $\{K_n\}_{n \in \N}$  a family of distinct torus knots, it is not apparent how to extend this to self-connected sums of a single torus knot, in regards to Conjecture~\ref{conj:1.2}.

In this article, we use recent advances in involutive Heegaard Floer homology \cite{HM, DHSTcobord, HHSZ} to verify Conjecture~\ref{conj:1.2} for all proper rational unknotting number one patterns satisfying a certain non-zero linking number condition; see Theorem~\ref{thm:1.7}. (This class includes all multiply-clasped and twisted Whitehead doubles.) More broadly, for such patterns we establish a general condition on a family of companion knots $\{K_n\}_{n \in \N}$ which guarantees that $\{P(K_n)\}_{n \in \N}$ has infinite rank; see Theorem~\ref{thm:1.9}. Applying this to self-connected sums of a given knot $K$ allows us to verify Conjecture~\ref{conj:1.2} in the cases at hand. In fact, we show that for our examples, $K$ may be chosen to be topologically slice, so that the rank-expanding behavior of Conjecture~\ref{conj:1.2} persists even after restricting $P$ to $\smash{\cC_{TS}}$. 

Even in the well-studied case of Whitehead doubles, our formalism can handle many new families of companion knots. In addition to the linear independence of Whitehead doubles of the form $\{\Wh(nK)\}_{n \in \N}$, we give the first example of an infinite linearly independent family of (positively-clasped) Whitehead doubles whose companion knots all have $\tau(K) \leq 0$. By work of the second author \cite[Theorem 1.7]{Hedden} combined with that of Hom \cite{Hom2017survey} and Sato \cite[Theorem 1.2]{Sato}, if $\tau(K) \leq 0$ then the stable equivalence class of the knot Floer homology of $\Wh(K)$ is trivial. This means that the (non-involutive) knot Floer invariants of $D(K)$ contain no interesting concordance information; hence such knots are difficult to approach directly using knot Floer homology. Note that the companion knots of \cite{HK, PC, HPC, NST} all have positive $\tau$-invariant.

\begin{corollary}\label{cor:B}
There exists a family of knots $\{K_n\}_{n \in \N}$ with each $\tau(K_n) \leq 0$ such that $\{\Wh(K_n)\}_{n \in \N}$ is linearly independent. If desired, the $K_n$ may be taken to be topologically slice.
\end{corollary}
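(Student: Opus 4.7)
The plan is to deduce Corollary~\ref{cor:B} as a direct application of the general rank-expansion result (Theorem~\ref{thm:1.7}, or more flexibly Theorem~\ref{thm:1.9}) by choosing the family of companions to have non-positive $\tau$-invariants. Since $\Wh$ is a proper rational unknotting number one pattern whose linking number with $\{\mathrm{pt}\} \times \partial D^2$ equals $\pm 1$, the rank-expansion framework applies. The key observation is that the Floer-theoretic hypotheses of Theorem~\ref{thm:1.9} are stated purely in terms of involutive knot Floer data and impose no restriction on the sign of $\tau$; this is precisely the difference from the instanton-based approaches of \cite{HK,PC,HPC,NST}, which all require $\tau > 0$.

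For the first (smooth) assertion, I would take $K = -T_{2,3}$ and apply Theorem~\ref{thm:1.7} along $\{nK\}_{n \in \N}$. The resulting family $K_n := -nT_{2,3}$ satisfies $\tau(K_n) = -n \leq 0$, and by the rank-expansion conclusion, $\{\Wh(K_n)\}_{n \in \N}$ is linearly independent. What must be checked is that the involutive Floer-theoretic hypothesis of Theorem~\ref{thm:1.9} is satisfied by $-T_{2,3}$; since the involutive $\CFK^\infty$-package of $T_{2,3}$ (and hence of its mirror) is well understood, this verification reduces to identifying appropriate elements in the invariants of $-nT_{2,3}$ that interact nontrivially with the satellite-surgery formalism underlying Theorem~\ref{thm:1.9}.

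For the topologically slice refinement, the plan is to replace $-T_{2,3}$ by a topologically slice knot with $\tau \leq 0$ while preserving the relevant involutive data. A natural candidate is the negative untwisted Whitehead double $K = \Wh(-T_{2,3})$, which is topologically slice by Freedman's theorem, satisfies $\tau(K) = 0$, and whose involutive knot Floer invariants are computable from those of $-T_{2,3}$ via a standard mapping-cone or bordered-type description of Whitehead doubles. Connected sums $nK$ remain topologically slice and continue to have $\tau = 0$, so applying Theorem~\ref{thm:1.7} to this $K$ yields the desired $\mathcal{C}_{TS}$-family.

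The principal obstacle in both parts is verifying the involutive hypothesis of Theorem~\ref{thm:1.9} for the chosen companion $K$ --- in particular, exhibiting a sufficiently rich supply of independent elements in the involutive invariants of $nK$ to drive the rank-expansion argument, despite the fact that $\tau(nK) \leq 0$ forces the stable-equivalence class of $\CFK$ of $\Wh(nK)$ (and hence the ordinary Floer-theoretic concordance invariants) to be trivial. This is precisely where involutive Heegaard Floer homology is essential: the $\iota$-action on the branched double cover of $\Wh(nK)$ detects concordance information that the non-involutive package cannot, and it is the input that allows Theorem~\ref{thm:1.9} to succeed with no sign constraint on $\tau$.
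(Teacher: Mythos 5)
Your high-level strategy---reduce Corollary~\ref{cor:B} to the rank-expansion machinery of Theorem~\ref{thm:1.9} and note that the hypothesis is Floer-theoretic rather than a constraint on the sign of $\tau$---is correct, and you are right that this is what makes the involutive approach succeed where the $\tau$-sign obstruction blocks the instanton arguments. However, your chosen companion families do not actually satisfy that hypothesis, and this is a genuine gap. For $K_n = -nT_{2,3}$, you need $V_0(K_n) - V_0(-K_n) \rightarrow \infty$ (equivalently, via $\Sigma_2(D(K_n)) = S^3_{1/2}(K_n \# K_n^r)$, that $V_0(K_n \# K_n^r) \rightarrow \infty$). But $-nT_{2,3}$ is a connected sum of \emph{negative} L-space knots, so $V_0(-nT_{2,3}) = 0$ for all $n$; hence $V_0(K_n) - V_0(-K_n) = -V_0(nT_{2,3}) \rightarrow -\infty$, and $V_0(K_n \# K_n^r) = V_0(-2nT_{2,3}) = 0$ identically. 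Theorem~\ref{thm:1.7} for $K = T_{2,3}$ (or $-T_{2,3}$) only gives independence of $\{D(nT_{2,3})\}_{n>0}$, whose companions all have $\tau > 0$; it says nothing about the family $\{D(-nT_{2,3})\}_{n>0}$. The topologically slice candidate $K = \Wh(-T_{2,3})$ fails even worse: it is a genus-one knot with $\tau(K) = 0$, so by \cite[Theorem 1.2]{Sato} its $\nu^+$-equivalence class is trivial, and therefore $V_0(nK) = 0$ for all $n$. In both cases the quantity Theorem~\ref{thm:2.12}/\ref{thm:3.1} needs to go to infinity is stuck at zero, so the branched-double-cover obstruction detects nothing.

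The actual construction in the paper (Example~\ref{ex:6.3}) is more delicate and exploits the fact that $V_0$ and $\tau$ can be ``pulled apart'' by taking differences of L-space knots. One takes $K = A \# -B$ with $A, B$ sums of positive L-space knots chosen so that $V_0(2A) > V_0(2B)$ (which forces $V_0(2K) > 0$ by subadditivity) while simultaneously $\tau(A) \leq \tau(B)$ (which forces $\tau(K) \leq 0$). A concrete topologically slice realization is $K_{n,p,q} = p(T_{2,2n+1} - D_{2,2n+1}) + qD$ with $D = D(T_{2,3})$ and $0 < q \leq 2p$: here $V_0(2K_{n,p,q}) \geq q$ and $\tau(K_{n,p,q}) = q - 2p \leq 0$, and letting $q$ grow produces the desired family via Theorem~\ref{thm:2.12}. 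The essential step you are missing is thus the explicit construction of companions whose $V_0$-behavior and $\tau$-sign decouple; neither the negative trefoil family nor its Whitehead double accomplishes this.
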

Finally, we provide a re-proof of a conjecture of the second author and Pinz\'on-Caicedo \cite{HPC}, who asked whether there is a knot $K$ such that $D(K)$ and $D(-K)$ are both non-zero in concordance. This was recently answered by Lewark and Zibrowius using Khovanov homology \cite[Corollary 1.13]{LZ}; in Corollary~\ref{cor:6.1} we give a general condition on $K$ which guarantees the linear independence of $D(K)$ and $D(-K)$. In Corollary~\ref{cor:B}, the knots $K_n$ can be taken so that each pair $D(K_n)$ and $D(-K_n)$ are linearly independent.



\subsection{Main theorems}\label{sec:1.1}

We first give a rough overview of the class of patterns considered in this paper. A pattern $P$ has \textit{rational unknotting number one} if there exists a rational tangle $T$ embedded in $P$ such that replacing $T$ with another rational tangle $T'$ gives an unknot in the solid torus. This replacement is said to be \textit{proper} if $T'$ connects the same two pairs of points as $T$. Some examples of rational unknotting number one patterns are given in Figure~\ref{fig:1.1}; see Sections~\ref{sec:2.2} and \ref{sec:2.4} for further discussion and examples.

\begin{figure}[h!]
\center
\includegraphics[scale=1]{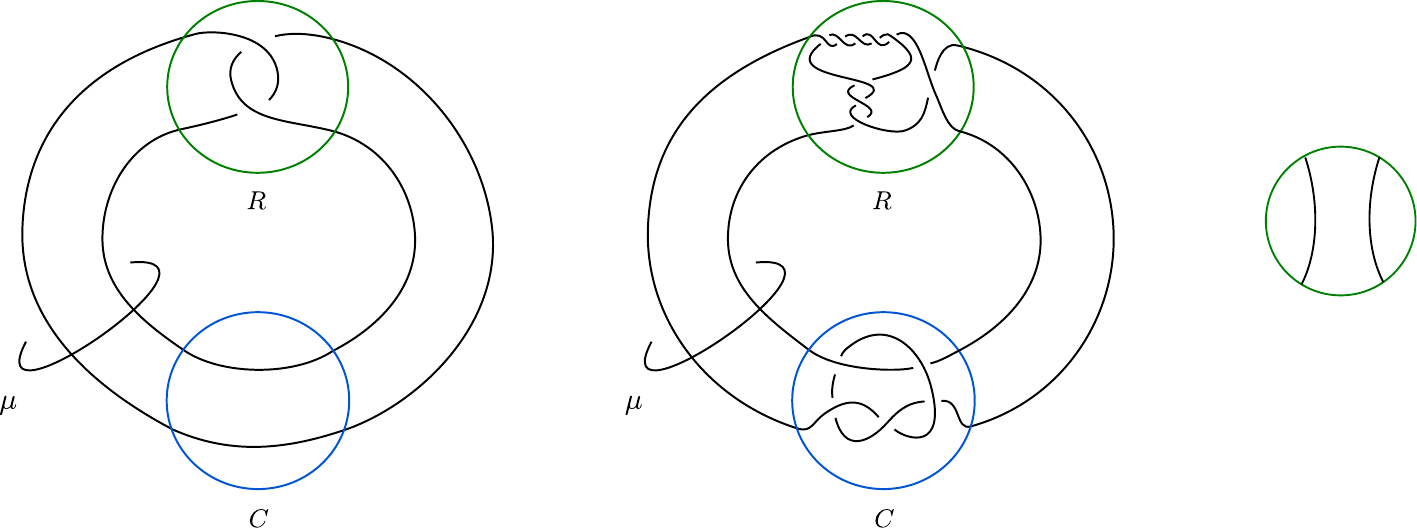}
\caption{A large class of unknotting number one patterns can be formed by gluing a rational tangle $R$ to another tangle $C$ with unknotted (horizontal) closure; see Section~\ref{sec:2.4}. In such cases the unknotting tangle replacement is given by replacing $R$ with a trivial tangle of two vertical strands.}\label{fig:1.1}
\end{figure}

Let $P$ be a rational unknotting number one pattern. By the Montesinos trick, a choice of unknotting tangle replacement identifies the branched double cover $\Sigma_2(P(U))$ with surgery on a strongly invertible knot $J$:
\[
\Sigma_2(P(U)) \cong S^3_{p/q}(J).
\]
In Section~\ref{sec:2.2}, we describe how to explicitly obtain $J$ and the surgery coefficient $p/q$. This data depends on our choice of unknotting tangle replacement; when discussing a rational unknotting number one pattern, we will usually have a fixed tangle replacement in mind, although we suppress writing this explicitly. 

Using this identification, we define an additional invariant of a rational unknotting number one pattern, which we call the \textit{linking number} $\ell$. Let $\mu$ be a meridian of the solid torus for $P$ and $\tmu$ be a lift of $\mu$ to the branched cover $\Sigma_2(P(U))$. We then set $\ell = \lk(J, \tmu)$. This may be computed by using the algorithm for determining $J$ outlined in Section~\ref{sec:2.2}. 

We now state our main theorem:

\begin{theorem}\label{thm:1.7}
Let $P$ be a proper rational unknotting number one pattern with non-zero linking number. Then $P$ is rank-expanding. Moreover, if $K$ is any knot such that $V_0(nK) - V_0(-nK) \rightarrow \infty$ as $n \rightarrow \infty$, then $P$ is rank-expanding along $K$.
\end{theorem}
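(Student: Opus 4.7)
The plan is to analyze the branched double covers $\Sigma_2(P(nK))$ together with their canonical covering involutions, and to use invariants from involutive Heegaard Floer homology to detect linear independence. Using the Montesinos trick and the proper rational unknotting number one assumption, we have $\Sigma_2(P(U)) \cong S^3_{p/q}(J)$ for a strongly invertible knot $J$. For a general companion $K$, the winding-number / linking-number bookkeeping in the branched cover identifies
\[
\Sigma_2(P(K)) \;\cong\; S^3_{p/q}(J_K),
\]
where $J_K$ is obtained from $J$ by inserting $\ell$ copies of $K$ along one strand and $\ell$ copies of $-K$ along the other (schematically, $J_K = J \# K^{\#\ell} \# (-K)^{\#\ell}$, with summands arranged to be swapped by the strong inversion). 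This manifold carries a canonical involution $\tau$ covering the hyperelliptic involution downstairs.

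Next, I would apply the involutive large-surgery / mapping-cone formula to the pair $(\Sigma_2(P(nK)), \tau)$. This produces a well-defined local-equivalence class in the group $\Gz$ and, in particular, equivariant $d$-invariants $\bar{d}_\tau$ and $\underline{d}_\tau$ of $\Sigma_2(P(nK))$ computable from the $\iota\tau$-complex of $J_{nK}$. The heart of the argument is then to show that, since $\ell \neq 0$, the contribution of $nK$ to this $\iota\tau$-complex is nontrivial, and that the $V_0 / V_0(-\cdot)$ asymmetry feeds directly into the equivariant $d$-invariants. Concretely, I would aim to establish an estimate of the form
\[
\bar d_\tau\bigl(\Sigma_2(P(nK))\bigr) - \underline d_\tau\bigl(\Sigma_2(P(nK))\bigr) \;\geq\; V_0(nK) - V_0(-nK) - C,
\]
where $C$ depends only on $P$. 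By hypothesis, the right-hand side tends to infinity.

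Finally, a diagonal subsequence argument in $\Gz$ (of the kind used by Hom and in the involutive analogues developed in \cite{DHSTcobord, HHSZ}) converts unbounded growth of these equivariant invariants into linear independence. Passing to a subsequence $\{n_k\}$ so that the equivariant invariant of $P(n_{k+1}K)$ dominates any $\mathbb{Z}$-linear combination of those of $\{P(n_j K)\}_{j \leq k}$ yields an infinite linearly independent family, and in fact produces a $\mathbb{Z}^\infty$-summand of $\cC$. Specializing to any companion satisfying the hypothesis (for example $K = T_{2,3}$, for which $V_0(nK) = n$ and $V_0(-nK) = 0$) yields the rank-expanding conclusion.

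The main obstacle is the growth estimate: pinning down how the strong inversion on $J$ interacts with the inserted $K$-summands under $\iota\tau$, and verifying that the $V_0 - V_0(-\cdot)$ asymmetry is not cancelled by the swap of $K^{\#\ell}$ and $(-K)^{\#\ell}$ or absorbed into the fixed contribution of $J$ itself. This requires a careful analysis of the involutive knot Floer complex of a connected sum on which the involution permutes summands, and is the technical heart of the argument.
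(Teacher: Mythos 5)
Your high-level strategy (branched double covers $\Sigma_2(P(nK)) \cong S^3_{p/q}(J_{nK})$, the involutive surgery formula, and a subsequence/diagonal argument to extract linear independence) matches the paper's, but the mechanism you propose diverges in ways that leave a real gap.

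First, a small but substantive point: $J_K$ is not a connected sum $J \# K^{\#\ell} \# (-K)^{\#\ell}$. It is a \emph{double infection} of the fixed strongly invertible knot $J$ — one infection by $K$ along $\tmu$ and one along $\tau\tmu$ (Definition~\ref{def:2.6}); these are genuine satellite operations, not connected sums, and $\ell$ is a linking number, not a multiplicity of summands.

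Second, and more importantly, you propose to work in the equivariant setting — the $\iota\tau$-complex of the branched cover, equivariant $d$-invariants $\bar d_\tau, \underline d_\tau$, and the group $\Gz$ — and you flag as the ``technical heart'' the problem of understanding how the strong inversion interacts with the inserted $K$-summands under $\iota\tau$. The paper sidesteps exactly this difficulty. It works in ordinary involutive Heegaard Floer homology, mapping $\Theta^3_{\Z_2} \to \Inv$ by $[Y] \mapsto (\CFm(Y),\iota)$, and never needs to understand $\iota_{J_{nK}}$ or the covering involution as extra Floer-theoretic structure. The point is Theorem~\ref{thm:2.13}: when $q$ is \emph{even} (i.e.\ the tangle replacement is proper), the local class of $S^3_{p/q}(J_n)$ is the mapping cone $A^-_s \oplus A^-_s \to B^-_s$ with $\iota$ acting by swapping the two $A^-_s$ factors — this is entirely independent of $\iota_{K}$-type data. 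One then reads off $X^\vee_{V_s(J_n)}[-d] \le C_n$, and Theorem~\ref{thm:2.12} converts $V_s(J_n) \to \infty$ into infinite rank. You would still need something of this flavor even in your framework, and you haven't supplied it.

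Third, the growth estimate in the paper is \emph{not} established Floer-theoretically from the infected knot's complex. It is a topological argument: Section~\ref{sec:3.2} constructs a negative-definite cobordism from $S^3_M(J_{nK})$ to $S^3_{N_1}(nK) \# Y \# S^3_{N_2}(nK)$ (with $N_1 \gg 0$, $N_2 < 0$, and crucially $\ell \neq 0$ is what makes negative definiteness possible — see Lemmas~\ref{lem:3.3}, \ref{lem:3.4} and Remark~\ref{rem:3.6}), and then uses $d$-invariant monotonicity plus $d(S^3_N(K),[0]) = (N-1)/4 - 2V_0(K)$ to get $V_0(J_{nK}) \geq V_0(nK) - V_0(-nK) + C$. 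This cobordism is the key ingredient missing from your plan; without it you have no handle on $V_0(J_{nK})$ in terms of the companion invariants.

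Finally, your claim that the diagonal argument gives a $\Z^\infty$-summand is an overreach: Theorem~\ref{thm:1.7} asserts only infinite rank. The summand statement (Theorem~\ref{thm:1.10}) requires an actual local equivalence to some $X_i^\vee$ (not just an inequality) together with the explicit homomorphisms $\phi_n$ from \cite{DHSTcobord}, and is only established for rational tangle patterns with thin companions.
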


Again, all multiply-clasped and twisted Whitehead doubles satisfy the hypotheses of Theorem~\ref{thm:1.7}. Since we may freely replace $K$ with $-K$ for the purposes of rank expansion, note that in the latter half of the theorem it also suffices to establish $V_0(nK) - V_0(-nK) \rightarrow \infty$ as $n \rightarrow - \infty$. The condition on $\ell$ is equivalent to the linking number condition of \cite{HPC} and is in fact a property of $P$, independent of the choice of tangle replacement (see Remark~\ref{rem:3.5}).

Although the Floer-theoretic condition $V_0(nK) - V_0(-nK) \rightarrow \infty$ might seem slightly opaque, there are many classes of knots for which this hypothesis is easy to verify. These include the following large families:

\begin{enumerate}
\item $K$ is any L-space knot, such as a torus knot or algebraic knot, or any linear combination of such knots of the same sign/handedness;
\item $K$ is any thin knot with $\tau(K) \neq 0$, such as a (quasi-)alternating knot of non-zero signature;
\item $K$ is any linear combination of genus one knots such that the overall connected sum satisfies $\tau(K) \neq 0$.
\end{enumerate}
These examples are discussed in Section~\ref{sec:5}; note that the above list is certainly not exhaustive. The wide applicability of Theorem~\ref{thm:1.7} may be taken as evidence that Conjecture~\ref{conj:1.2} indeed holds along every rank-one subgroup.

Note that since any Whitehead double has genus one, in Theorem~\ref{thm:1.7} we may take $K$ itself to be a Whitehead double so long as $\tau(K) \neq 0$. This additional condition is quite mild, and can easily be verified using \cite[Theorem 1.4]{Hedden} (cf. \cite{LivingstonNaik}). Setting (for example) $K = \Wh(T_{2,3})$, we immediately obtain:

\begin{corollary}\label{cor:1.8}
Let $P$ be a proper rational unknotting number one pattern with non-zero linking number. Then $P$ is rank-expanding when restricted to the subgroup $\smash{\cC_{TS}}$ of topologically slice knots. Setting $P$ itself to be $D$ (so that the image of $P$ is contained in $\smash{\cC_{TS}}$) gives an example of a rank-expanding operator $P |_{\smash{\cC_{TS}}} \colon \smash{\cC_{TS}} \rightarrow \smash{\cC_{TS}}$.
\end{corollary}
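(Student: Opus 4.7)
The plan is to apply Theorem~\ref{thm:1.7} with a topologically slice companion. A convenient candidate is the untwisted positive Whitehead double $K = D(T_{2,3})$ of the right-handed trefoil. By Freedman's theorem, every knot with trivial Alexander polynomial is topologically slice; since every Whitehead double has Alexander polynomial one, we have $K \in \cC_{TS}$.

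To invoke Theorem~\ref{thm:1.7}, I must verify the Floer-theoretic hypothesis $V_0(nK) - V_0(-nK) \to \infty$ as $n \to \infty$. Example~(3) in the list following Theorem~\ref{thm:1.7} supplies exactly this whenever $K$ is a linear combination of genus-one knots with $\tau(K) \neq 0$. Since $D(T_{2,3})$ has genus one, the only remaining task is to check $\tau(D(T_{2,3})) \neq 0$, and this is handled by the second author's computation \cite[Theorem 1.4]{Hedden}: for any knot $L$ with $\tau(L) > 0$, one has $\tau(D(L)) = 1$, and in particular $\tau(D(T_{2,3})) = 1$. Theorem~\ref{thm:1.7} then yields that $P$ is rank-expanding along $\{nK\}_{n \in \Z}$, which is a rank-one subgroup of $\cC_{TS}$. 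This establishes the first assertion.

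For the second assertion, I observe that $D(J)$ has Alexander polynomial one for every knot $J$, so another appeal to Freedman shows that the satellite operator $D$ carries all of $\cC$ into $\cC_{TS}$; in particular it restricts to a map $D\vert_{\cC_{TS}} \colon \cC_{TS} \to \cC_{TS}$. The fact that $D$ itself is a proper rational unknotting number one pattern with nonzero linking number is verified in Section~\ref{sec:2.4} (where the standard clasp tangle provides the required unknotting), so specializing the argument above to $P = D$ produces a topologically slice companion $K$ along which $D$ is rank-expanding, with the entire infinite linearly independent image $\{D(nK)\}_{n \in \Z}$ sitting inside $\cC_{TS}$.

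The principal obstacle is small: one needs only to locate a single topologically slice companion satisfying both hypotheses of Theorem~\ref{thm:1.7}, and this is resolved by the happy coincidence that Whitehead doubles of nontrivial companions are simultaneously topologically slice, genus one, and (when the underlying companion has nonzero $\tau$) have nonzero $\tau$ themselves. Everything else is a direct citation of Theorem~\ref{thm:1.7} together with standard properties of the Whitehead doubling operator.
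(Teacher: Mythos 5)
Your proposal matches the paper's own argument essentially verbatim: the paper also takes $K = D(T_{2,3})$, notes that a Whitehead double is genus one and topologically slice, computes $\tau(D(T_{2,3})) = 1$ via \cite[Theorem 1.4]{Hedden}, and invokes the genus-one case (item (3)) of the hypotheses of Theorem~\ref{thm:1.7}. The only difference is that you spell out the Freedman/Alexander-polynomial-one justification for topological sliceness, which the paper leaves implicit.
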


Theorem~\ref{thm:1.7} is a special case of a broader statement regarding the images of general families of companions:

\begin{theorem}\label{thm:1.9}
Let $P$ be a proper rational unknotting number one pattern with non-zero linking number and $p/q > 0$. If $\{K_n\}_{n \in \N}$ is any family of knots such that $V_0(K_n) - V_0(-K_n) \rightarrow \infty$ as $n \rightarrow \infty$, then $\{P(K_{n})\}_{n \in \N}$ has infinite rank.
\end{theorem}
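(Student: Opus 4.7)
The plan is to study the branched double covers $\Sigma_2(P(K_n))$, equipped with their covering involutions $\tau$, and to detect linear independence of $\{P(K_n)\}$ in $\cC$ via involutive Heegaard Floer invariants of these equivariant rational homology spheres. Applying the Montesinos trick to the proper rational unknotting tangle of Section~\ref{sec:2.2} identifies each $\Sigma_2(P(K_n))$ with a Dehn surgery $S^3_{p/q}(J_n)$, where $J_n$ is obtained from the strongly invertible knot $J$ by an infection along a lift $\tmu$ of the meridian of the solid torus, with companion built from $K_n$. The hypothesis $\ell = \lk(J,\tmu) \neq 0$ ensures that this infection nontrivially affects the Alexander grading of $J_n$, while the proper tangle replacement hypothesis guarantees that the covering involution extends across the infection, so that each $(\Sigma_2(P(K_n)),\tau)$ is a well-defined equivariant rational homology sphere.

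Suppose, toward a contradiction, that some nontrivial linear combination $\sum_i a_i P(K_{n_i})$ is slice in $\cC$. Taking branched double covers of a slice disc produces an equivariant rational homology cobordism from the equivariant connected sum $\#_i a_i (\Sigma_2(P(K_{n_i})),\tau)$ to $S^3$, placing the corresponding class in the kernel of the tautological map into the equivariant homology cobordism group $\G$ of \cite{DHSTcobord,HHSZ}. To obstruct this I would use the involutive correction terms $\Vtu,\Vtl,\Vitu,\Vitl$; these are homology cobordism invariants of such equivariant pairs, behave subadditively (with controlled error) under equivariant connected sum, and descend to well-behaved maps on $\G$.

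The key calculational step is to compute, or at least effectively estimate, these invariants on $(\Sigma_2(P(K_n)),\tau)$ in terms of the Heegaard Floer data of $K_n$. The plan is to use the infection description of $J_n$ together with the large-surgery/mapping-cone formulas, valid here because $p/q > 0$, to relate the knot Floer complex of $J_n$ to that of $J$ with $|\ell|$ copies of $K_n$-data inserted at $\tmu$. Since the two involutions $\tau$ and $\iota\tau$ treat the two infection strands asymmetrically, this should yield an inequality of the form
\[
\Vtu(\Sigma_2(P(K_n))) - \Vitu(\Sigma_2(P(K_n))) \;\geq\; C\bigl(V_0(K_n) - V_0(-K_n)\bigr) - C'
\]
for constants $C>0$ and $C'$ depending only on $P$. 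Granted such an estimate, the hypothesis $V_0(K_n)-V_0(-K_n) \to \infty$ forces these involutive invariants to become arbitrarily large as $n\to\infty$, and a standard subsequence/staircase argument in $\G$ then yields linear independence of $\{(\Sigma_2(P(K_n)),\tau)\}$, hence of $\{P(K_n)\}$ in $\cC$.

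I expect the main obstacle to be exactly this third step: the explicit identification of $J_n$ as an infection of $J$ along $\tmu$, the tracking of the covering involution under this infection, and the computation of the involutive Heegaard Floer package of $S^3_{p/q}(J_n)$ accurately enough to see its asymmetric sensitivity to $V_0(K_n)$ versus $V_0(-K_n)$. The linking number condition $\ell\neq 0$ and the properness of the tangle replacement should both enter crucially here, the former to guarantee that $K_n$ contributes nontrivially to the infected complex and the latter to ensure that the two strands acted upon by $\tau$ and $\iota\tau$ respectively are genuinely distinct so that the two involutive invariants separate. Once these estimates are in place, the reduction from equivariant homology cobordism back to ordinary concordance is formal.
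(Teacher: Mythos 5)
Your proposal shares the overarching strategy of passing to branched double covers and using involutive Heegaard Floer theory, but it diverges from the paper's argument in a way that leaves the central estimate unsubstantiated, and it misattributes the roles of the two key hypotheses.

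The paper does \emph{not} work with the covering involution $\tau$ or an equivariant homology cobordism group. It works in the ordinary $\mathbb{Z}_2$-homology cobordism group $\Theta^3_{\mathbb{Z}_2}$ and uses the Hendricks--Manolescu conjugation involution $\iota$ on $\CFm(S^3_{p/q}(J_n))$. The properness hypothesis is used only through Lemma~\ref{lem:2.4}: it forces $q$ to be even, and then the involutive surgery formula (Theorem~\ref{thm:2.13}) identifies the local equivalence class with the mapping cone $A^-_s \oplus A^-_s \to B^-_s$, whose $\iota$-action swaps the two $A^-$ summands \emph{regardless of} $\iota_K$; from this one extracts an inequality $C_n^\vee \leq X_{V_s(J_n)}$ (Theorem~\ref{thm:3.1}). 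It has nothing to do with separating two strands acted on by $\tau$ and $\iota\tau$, and in any case the covering involution always exists on a branched double cover, so ``the covering involution extends across the infection'' is not the content of properness. Likewise, $\ell\neq 0$ is not used to make $K_n$ ``contribute nontrivially to the infected complex''; it is used to make a negative-definite cobordism $W$ from $S^3_M(J_n)$ to $S^3_{N_1}(K_n)\#Y\#S^3_{N_2}(K_n)$ with $N_1>0$ and $N_2<0$ exist (Lemmas~\ref{lem:3.3}--\ref{lem:3.4}). That cobordism gives the estimate $V_0(J_n)\geq V_0(K_n)-V_0(-K_n)+\text{const}$, which is what converts the hypothesis into $V_0(J_n)\to\infty$. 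Your third step---establishing an inequality of the shape $\Vtu - \Vitu \geq C(V_0(K_n)-V_0(-K_n))-C'$---is left as a hope, and it is not clear how you would prove it; in particular, the description of $J_n$ as carrying ``$|\ell|$ copies of $K_n$-data inserted at $\tmu$'' is incorrect (it is a \emph{double} infection, one copy of $K_n$ along $\tmu$ and one along $\tau\tmu$, independently of $\ell$), so the mechanism you propose for making $V_0(K_n)$ enter the estimate is not right. The genuine gap is that you have neither the surgery-formula local equivalence step nor the negative-definite cobordism step, and without one of these there is no route from the hypothesis $V_0(K_n)-V_0(-K_n)\to\infty$ to any controlled growth of a Floer-theoretic invariant of $\Sigma_2(P(K_n))$.
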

\noindent
Theorem~\ref{thm:1.7} follows immediately from Theorem~\ref{thm:1.9} by setting $K_n = nK$ and (if needed) replacing $P$ by $-P$. (If $P$ has $p/q > 0$, then the mirrored pattern $-P$ has $p/q < 0$.) Note that the families of knots discussed after Theorem~\ref{thm:1.7} all apply to Theorem~\ref{thm:1.9}. Previous results in the vein of Theorem~\ref{thm:1.9} have generally focused on families of companions such as torus knots; the classes discussed in this section are significantly broader.

In certain cases, it is possible to strengthen Theorem~\ref{thm:1.9} by establishing linear independence of the entire image $\{P(K_{n})\}_{n \in \N}$. For this, we restrict to the class of \textit{rational tangle patterns}. We define the $p/q$-rational tangle pattern by taking the closure of a $p/q$-rational tangle, as discussed in Section~\ref{sec:2.4}. This is the simplest case of a rational unknotting number one pattern and corresponds to the case where $J$ is an unknot.

\begin{theorem}\label{thm:1.10}
Let $P$ be a $p/q$-rational tangle pattern with $p/q > 0$.
\begin{enumerate}
\item Suppose $q$ is even. Let $\{K_n\}_{n \in \N}$ be any family of thin knots with $\tau(K_n)$ distinct and greater than $\lfloor(\lfloor p/q \rfloor + 1)/4 \rfloor$. Then $\{P(K_{n})\}_{n \in \N}$ is linearly independent and in fact spans a $\Z^\infty$-summand of $\cC$.
\item Suppose $q$ is odd.  Let $\{K_n\}_{n \in \N}$ be any family of thin knots with $\tau(K_n)$ distinct and less than zero. Then $\{P(K_{n})\}_{n \in \N}$ is linearly independent and in fact spans a $\Z^\infty$-summand of $\cC$.
\end{enumerate}
\end{theorem}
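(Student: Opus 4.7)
My strategy is to compute the involutive Heegaard Floer invariants of the branched double covers $\Sigma_2(P(K_n))$ directly and use them to construct explicit $\Z$-valued homomorphisms that detect each $P(K_n)$ individually. For a $p/q$-rational tangle pattern $P$, the knot $J$ provided by the Montesinos trick is the unknot, so $\Sigma_2(P(U))$ is the lens space $L(p,q)$ and $\Sigma_2(P(K))$ admits a rational surgery description on a lift of a distinguished curve governed by $K$, coming from the $2$-fold cover of the solid torus branched over the pattern. This gives a Heegaard Floer surgery description in which the involution induced by the covering transformation is tractable, and in which the $\iota$-complex associated to $\Sigma_2(P(K_n))$ can be extracted from the involutive knot Floer data of $K_n$ via a mapping cone argument.

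First, using the involutive large-surgery / mapping cone formalism of \cite{HM, HHSZ}, I would compute $\bar{d}$ and $\underline{d}$ of $\Sigma_2(P(K_n))$ for a suitably chosen self-conjugate $\spinc$ structure. The dichotomy on the parity of $q$ is forced by the structure of self-conjugate $\spinc$ structures on $L(p,q)$: when $q$ is even there is a distinguished spin structure of one sign, and when $q$ is odd the relevant self-conjugate structure forces the opposite sign to contribute. For thin knots, the local equivalence class of $\CFK^\infty(K_n)$ is completely determined by $\tau(K_n)$, so the computation collapses to an explicit expression in $\tau(K_n)$, and the $\tau$-thresholds in the statement --- the lower bound $\lfloor(\lfloor p/q \rfloor + 1)/4 \rfloor$ in the even case and the upper bound $0$ in the odd case --- are calibrated precisely so that the pattern's contribution to the surgery formula does not absorb or cancel the companion's contribution; below these thresholds the computation would be swamped by the pattern side of the formula and the signal from $\tau(K_n)$ would be lost.

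Second, I would organize these calculations into a sequence of homomorphisms $\cC \to \Z$, each arising from an appropriate involutive local-equivalence invariant of the branched double cover, and show that in the ordering of the $K_n$ by $\tau$-value the resulting matrix of values is (upper) triangular with non-zero diagonal. A standard splitting argument for triangular systems of $\Z$-valued homomorphisms then promotes linear independence of $\{P(K_n)\}$ to the existence of a $\Z^\infty$-summand of $\cC$ containing the family. The main obstacle is the explicit involutive surgery computation for $\Sigma_2(P(K_n))$ and verifying that the stated $\tau$-bounds are sharp enough to produce a genuinely triangular, rather than merely invertible-in-the-limit, system; once this is in hand, the infinite-rank and summand conclusions follow by the same formal procedure used to convert Theorem~\ref{thm:1.9} from an infinite-rank statement into a linear independence statement in the rational-tangle case.
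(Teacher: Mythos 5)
Your overall framework is right---pass to the branched double cover $\Sigma_2(P(K_n)) \cong S^3_{p/q}(K_n \# K_n^r)$, apply the involutive surgery formula, and use that thin knots have simple involutive knot Floer data---but there are two substantive gaps, and one misattribution.

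\textbf{The invariants $\bar d$ and $\underline d$ are not enough.} These are not homomorphisms on $\Theta^3_{\Z_2}$, and there is no mechanism to convert a family of $\bar d, \underline d$ computations into a triangular system of $\Z$-valued homomorphisms on $\cC$. The paper's key computational step is much stronger: using the involutive surgery formula (Theorem~\ref{thm:2.13}, from \cite{HHSZ}), the \emph{entire local equivalence class} of $\Sigma_2(P(K_n))$ is identified explicitly. For $q$ even, it is a grading-shifted copy of $X_{V_s(K_n\#K_n)}^\vee$ where $s = [p/2q]$; for $q$ odd, it is a grading-shifted copy of $X_{|\tau(K_n)|}^\vee$ or $X_{|\tau(K_n)|}^\vee \otimes X_1^\vee$ (depending on whether the thin-knot complex has a box). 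Identifying the class up to local equivalence is precisely what makes the summand argument go, because the homomorphisms $\phi_n : \widehat{\Inv} \to \Z$ constructed in \cite{DHSTcobord} satisfy $\phi_i(\widehat{h}(X_j)) = \delta_{ij}$; composing these with $\cC \to \Theta^3_{\Z_2} \to \Inv \to \widehat{\Inv}$ exhibits the $\Z^\infty$-summand directly, with no triangularity argument needed (in the even case) or a very mild one (in the odd case, where the $X_1^\vee$ contamination must be handled). Your ``appropriate involutive local-equivalence invariant'' homomorphisms are exactly these $\phi_n$, but you would need to name them and verify they pair as a dual basis against the $X_i$, which requires knowing the local equivalence classes on the nose, not just their $\bar d, \underline d$.

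\textbf{The parity dichotomy comes from the surgery formula, not from $\spinc$ structures on $L(p,q)$.} Since $p$ is odd (forced by $\Sigma_2(P(U))$ being a $\Z_2$-homology sphere), both $L(p,q)$ and $S^3_{p/q}(K_n\#K_n^r)$ have a unique self-conjugate $\spinc$ structure regardless of the parity of $q$. What changes is the \emph{shape} of the involutive mapping cone in Theorem~\ref{thm:2.13}: for $q$ odd one gets $(A_0^-, \iota_K)$, so one must analyze $\iota_K$ on the thin-knot complex; for $q$ even one gets a cone $A_s^- \oplus A_s^- \to B_s^-$ with $\iota$ swapping the two copies of $A_s^-$, which is insensitive to $\iota_K$. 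This also explains the asymmetry in the $\tau$-thresholds: in the even case one computes $V_s(K_n \# K_n) = \tau(K_n) - \lfloor s/2\rfloor$ (positive part), and $\lfloor s/2 \rfloor = \lfloor(\lfloor p/q\rfloor+1)/4\rfloor$, so the threshold ensures this is positive; in the odd case one needs $\tau(K_n\#K_n) < 0$ for the staircase to have the dual orientation so that $A_0^-$ is locally equivalent to $X_{|\tau|}^\vee$ rather than a trivial complex.
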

\noindent
A rational tangle pattern always has rational unknotting number one. However, the tangle replacement is proper if and only if $q$ is even. Note that every multiply-clasped and twisted Whitehead double is rational tangle pattern; hence Theorem~\ref{thm:1.10} recovers \cite[Theorem 13]{PC}. Using the methods of this paper, it is also possible to extend Theorem~\ref{thm:1.10} to other (non-thin) classes of companions, including certain families of torus knots or L-space knots.



\subsection{Overview}\label{sec:1.2}

Our results employ the well-established strategy of translating the linear independence of satellites to a question about branched double covers. Recall that taking the branched double cover gives a homomorphism
\[
\Sigma_2 \colon \cC \rightarrow \Theta^3_{\Z_2}.
\]
Thus, to determine whether a given family of knots is linearly independent in $\cC$, it suffices to show that their branched double covers are linearly independent in $\smash{\Theta^3_{\Z_2}}$. Establishing linear independence in the homology cobordism group is an old and well-explored application of Floer homology, and the results of \cite{HK, PC, HPC, NST} have all relied on leveraging the Chern-Simons filtration on instanton Floer theory in this setting. In this paper, we instead use the involutive Heegaard Floer package of Hendricks and Manolescu \cite{HM}. This has already been employed by several authors to study homology cobordism; see for example \cite{HMZ, DaiManolescu, DaiStoffregen, HHL, DHSTcobord, HHSZ}. 

The Heegaard Floer framework is especially suited to this strategy. Indeed, let $P$ be any rational unknotting number one pattern. We show in Section~\ref{sec:2.3} that for any companion knot $K$, the branched double cover $\smash{\Sigma_2(P(K))}$ is homeomorphic to $p/q$-surgery on a certain knot $J_{K, \tmu}$ constructed from $J$ and $K$. To establish the linear independence of $\{P(K_n)\}_{n \in \N}$, it thus suffices to show that the family of $\Z_2$-homology spheres $\{\smash{S^3_{p/q}(J_{K_n, \tmu})}\}_{n \in \N}$ is linearly independent. In joint work with Hendricks, Hom, and Zemke \cite{HHSZ}, the fourth author established a surgery formula for involutive Heegaard Floer homology. Our approach is to use this surgery formula to analyze the involutive Floer homology of $\{\smash{S^3_{p/q}(J_{K_n, \tmu})}\}_{n \in \N}$ and apply existing involutive Floer techniques to show that this family has infinite rank.

\subsection{Comparison with other techniques}

It may be somewhat surprising that involutive Heegaard Floer theory can be used to study the classes of satellites at hand. Indeed, prior to this article, Heegaard Floer invariants had not been successfully employed to establish that {\em any} winding number zero satellite operators have infinite rank. In particular, a host of Heegaard Floer theoretic invariants had failed a simple litmus test in this direction; namely, (re)proving the independence of infinite families of (untwisted) Whitehead doubles, first exhibited in \cite{HK}.

It is also worth noting an interesting conceptual distinction between the instanton and Heegaard Floer homologies. An important feature in the realm of instanton Floer homology is its filtration by the Chern-Simons functional, which provides refined topological invariants that are crucial for the arguments of \cite{HK, PC,daemi2020chern, HPC, NST}. While instanton and Heegaard Floer homology share many formal properties, no such filtration is present on the Heegaard Floer side.  Indeed, the analagous filtration on the Heegaard Floer side comes from the symplectic action functional used in the definition of Lagrangian Floer homology for the symmetric product of a Heegaard diagram. To date, however, no topological significance of this information for 3-manifolds and cobordisms between them  has been discovered.   Even if the action functional  could be used in a similar manner,  it seems unlikely that the Heegaard Floer package, being isomorphic to an abelian gauge theoretic  Floer theory (Seiberg-Witten monopole Floer homology) could recover the topological information about  non-abelian fundamental group representations contained in the Chern-Simons filtration. It is thus curious that the usage of involutive Heegaard Floer homology in our situation suffices to recover (and in some cases extend) previously known results established using instanton Floer theory.  

\subsection*{Organization}
In Section~\ref{sec:2}, we introduce the notion of a rational unknotting number one pattern and review the basic setup of involutive Heegaard Floer homology and local equivalence. We then prove Theorems~\ref{thm:1.7} and \ref{thm:1.9} in Section~\ref{sec:3} and Theorem~\ref{thm:1.10} in Section~\ref{sec:4}. In Section~\ref{sec:5}, we give some examples of Theorems~\ref{thm:1.7} and \ref{thm:1.9}. Finally, in Section~\ref{sec:6} we prove Corollaries~\ref{cor:A} and \ref{cor:B} and discuss further applications to Whitehead doubles.

\subsection*{Acknowledgements} ID was supported by NSF grant DMS-1902746.  MH was supported by NSF grant DMS-2104664. MS was supported by NSF grant DMS-1952755. MS was supported by NSF grant DMS-1952755. This material is based upon work supported by the National Science Foundation under grants  DMS-1929284 while MH was in residence at the Institute for Computational and Experimental Research in Mathematics in Providence, RI, during the Braids program in Spring 2022, and under grant DMS-1928930 while ID, MH, and AM were in residence at the Simons Laufer Mathematical Sciences Institute (previously MSRI) in Berkeley, CA, during the Fall 2022 semester. AM was supported by the postdotoral fellowship from Max-Planck-Institut f\"ur Mathematik and SLMath (MSRI) during the course of this work. MS thanks Kristen Hendricks, Jenifer Hom, Ian Zemke for their collaboration with him during work on the involutive surgery formula. The authors also Chuck Livingston for helpful conversations. 

\section{Background}\label{sec:2}

In this section, we define the class of rational unknotting number one patterns and give a brief overview of the setup of involutive Heegaard Floer homology.

\subsection{Rational tangles}\label{sec:2.1}
We first review the notion of a rational tangle. Let $B^3$ be a $3$-ball with four marked points on its boundary. A \textit{Conway tangle} (or sometimes just \textit{tangle}) is a proper embedding of two disjoint arcs $T \subseteq B^3$ whose boundaries are precisely the four marked points. Two tangles are \textit{isotopic} if there is an isotopy fixing the boundary which takes one to the other.

\begin{definition}\label{def:2.1}
A tangle is \textit{rational} if it consists of a pair of boundary-parallel arcs.
\end{definition}
 
The set of rational tangles in a fixed $3$-ball $B^3$ may be placed in (non-canonical) bijection with $\Q \cup \{\infty\}$, as follows. Fix a projection of $B^3$ and let $\smash{T_{1/0} = T_\infty}$ and $\smash{T_{0/1} = T_0}$ be the tangles displayed in Figure~\ref{fig:2.1}. Given any $p/q \in \Q \cup \{\infty\}$, consider the continued fraction
\[
p/q = [x_1, x_2, \cdots , x_n]= x_1+\cfrac{1}{x_2+\cfrac{1}{x_3+ \cdots + \cfrac{1}{x_{n}}}}
\]
with each $x_i \in \Z$. Let $h$ and $v$ be the horizontal and vertical half-twist operations displayed on the right in Figure~\ref{fig:2.1}. We then define the $p/q$-rational tangle $T_{p/q}$ to be
\[
    T_{p/q} = 
\begin{cases}
    h^{x_1} v^{x_2} \cdots h^{x_{n-1}} v^{x_n} T_{\infty}& \text{for \textit{n} even}\\
    h^{x_1} v^{x_2} \cdots v^{x_{n-1}} h^{x_n} T_{0}& \text{for \textit{n} odd}.
\end{cases}
\]

\begin{figure}[h!]
\center
\includegraphics[scale=0.8]{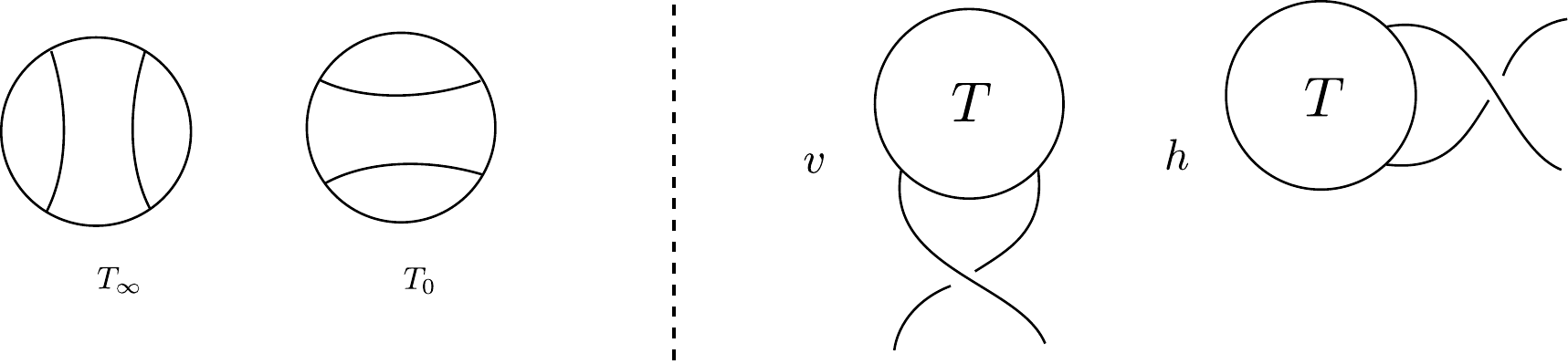}
\caption{Left: the tangles $\smash{T_{1/0} = T_\infty}$ and $\smash{T_{0/1} = T_0}$. Right: adding half-twists to a tangle $T$ via the operations $v$ and $h$.}\label{fig:2.1}
\end{figure}

Conway \cite{Conway} showed that up to isotopy, $\smash{T_{p/q}}$ is independent of the choice of the continued fraction decomposition of $p/q$ and that every rational tangle (on the same marked $3$-ball) arises from the above construction. (Here, our sign convention is opposite to that in \cite{Gordon}.) See Figure~\ref{fig:2.2} for some examples of rational tangles.

\begin{figure}[h!]
\center
\includegraphics[scale=0.9]{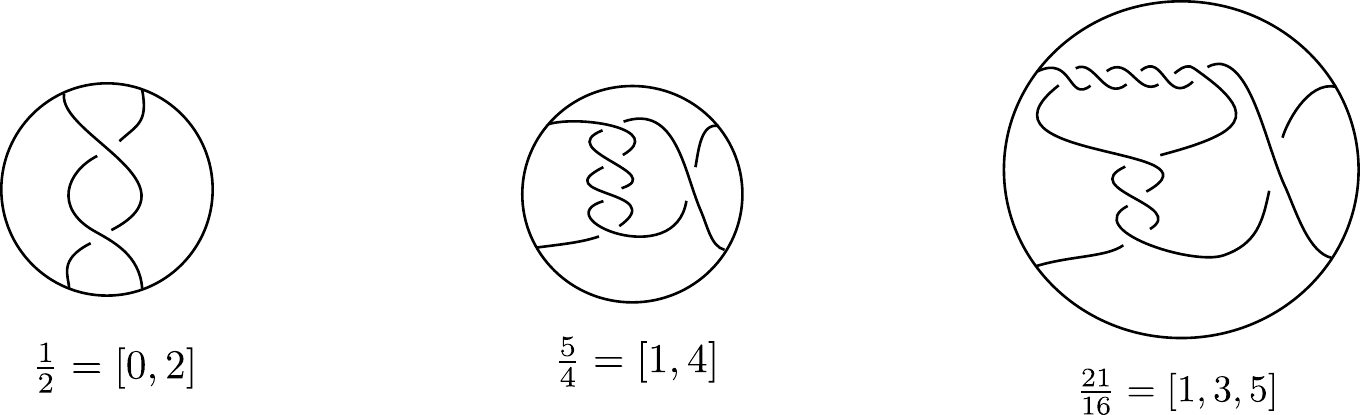}
\caption{Examples of rational tangles.}\label{fig:2.2}
\end{figure}

We stress that identifying a rational tangle with an element of $\Q \cup \{\infty\}$ is relative to a particular projection; or, equivalently, a choice for $T_\infty$ and $T_0$. (In the next subsection, we will see why being precise with this identification is so important.) Indeed, given an abstract $3$-ball with four marked points, there is no canonical choice for $T_\infty$ or $T_0$ without fixing a preferred projection. Instead, we declare $T_\infty$ and $T_0$ to be a pair of rational tangles which connect different pairs of marked points on $\partial B^3$ and are simultaneously boundary-parallel. Pushing $\smash{T_\infty}$ and $T_0$ to the boundary of $B^3$ then divides $\partial B^3$ into two hemispheres, from which it easily follows that up to homeomorphism (not fixing $\partial B^3$) we may draw $\smash{T_\infty}$ and $T_0$ as in Figure~\ref{fig:2.1}. When we refer to a \textit{$p/q$-rational tangle} without further elaboration, we will usually have in mind the standard projection in the sense of Figure~\ref{fig:2.1}.

In general, if we have fixed a projection of $B^3$ in which $\smash{T_\infty}$ and $T_0$ are not standard (in the sense of Figure~\ref{fig:2.1}), then in order to identify a tangle $T$ with an element of $\Q \cup \{\infty\}$, we must find the homeomorphism $F: B^3 \rightarrow B^3$ which moves $\smash{T_\infty}$ and $T_0$ into standard position with respect to the projection. We then apply the previous discussion to the projection of $F(T)$. 

\subsection{Rational unknotting number one patterns}\label{sec:2.2}
We now define the class of patterns considered in this paper.

\begin{definition}\label{def:2.2}
Let $P \subseteq S^1 \times D^2$ be a pattern. We say that $P$ has \textit{rational unknotting number one} if there exists a rational tangle $T$ in $P$ such that replacing $T$ with another rational tangle $T'$ gives a knot which is unknotted in the solid torus. We say that $P$ has \textit{proper rational unknotting number one} if $T'$ can be taken to be a proper tangle replacement; that is, connecting the same two pairs of marked points as $T$.
\end{definition}
See Figure~\ref{fig:2.3} for an example of a rational unknotting one pattern. We will write $P'$ to refer to the result of replacing $T$ with $T'$; this is of course isotopic to the unknot. When we discuss a rational unknotting number one pattern, we will usually implicitly have a particular unknotting tangle replacement $T'$ in mind, although a single pattern may admit several different unknotting replacements.

For us, the important feature of a rational unknotting number one pattern is that its branched double cover is surgery on a strongly invertible knot. Recall that a knot $J$ is called \textit{strongly invertible} if there exists an orientation-preserving involution $\tau$ of $S^3$ which fixes $J$ setwise and has two fixed points on $J$. By \cite{Waldhausen}, it follows that $\tau$ is conjugate to $180^{\circ}$ rotation about an unknotted axis. We claim that if $P$ has rational unknotting number one, then
\[
\Sigma_2(P(U)) \cong S^3_{p/q}(J)
\]
for some strongly invertible knot $J$ and surgery coefficient $p/q$. Moreover, this homeomorphism identifies the branched covering action on $\Sigma_2(P(U))$ with the involution on $\smash{S^3_{p/q}(J)}$ induced by the strong inversion on $J$. Our claim is immediate from the Montesinos trick: since $P'$ is an unknot, the branched double cover over $P'$ is $S^3$. The $3$-ball $B^3$ containing $T'$ lifts to a solid torus in $S^3$, and replacing $T'$ with $T$ corresponds to doing surgery on the core of this solid torus. 

However, explicitly producing $J$ and the surgery coefficient $p/q$ is slightly involved. An example of this procedure is given in Figure~\ref{fig:2.3}. Here, we have drawn the tangles $T$ and $T'$ in black and red, respectively, while $B^3$ is drawn in green. It is straightforward to check that replacing $T$ with $T'$ gives an unknot in the solid torus. The meridian of $S^1 \times D^2$ is labeled $\mu$.

To draw $J$, let $\gamma$ be a reference arc in $B^3$ which has one endpoint on each component of $T'$. In general, there are many such arcs (each looping around the components of $T'$ multiple times); we select one by requiring $\gamma$ not to intersect the disks obtained as traces of the isotopy pushing $T'$ to $\partial B^3$. In the case that $T'$ has trivial projection, $\gamma$ is the obvious arc running from one component to the other, as displayed in panel (2) of Figure~\ref{fig:2.3}. Let $F_t$ be an isotopy of the solid torus moving $P'$ into a local unknot in $S^1 \times D^2$. Apply $F_1$ to $\gamma$ and $B^3$, as shown in (3) of Figure~\ref{fig:2.3}. It is then straightforward to draw the lift of $F_1(\gamma)$ to the branched double cover over the unknot $F_1(P')$. This gives the desired strongly invertible knot $J$, displayed in panel (4). 

\begin{figure}[h!]
\center
\includegraphics[scale=0.8]{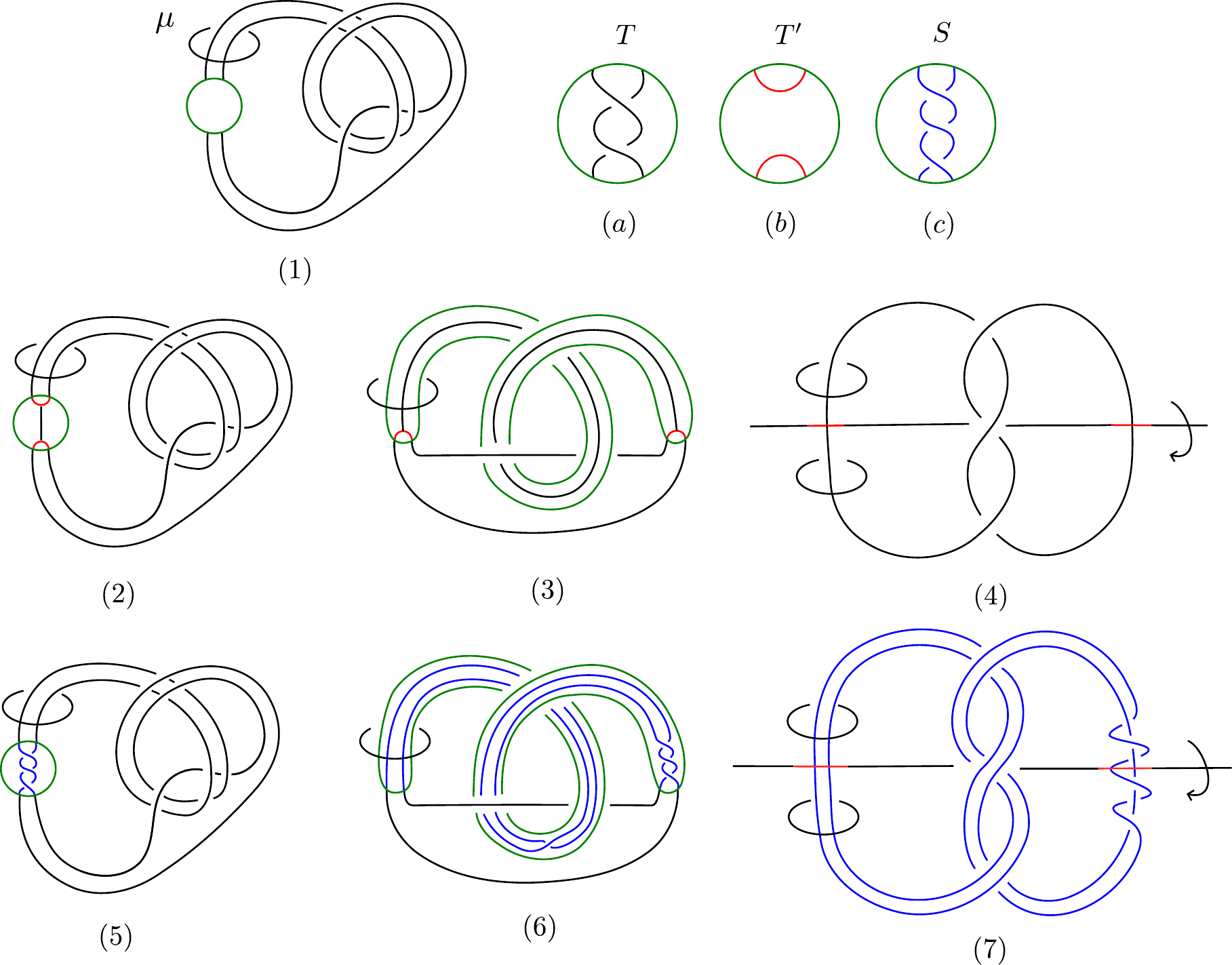}
\caption{Top row:  ($1$) the pattern $P$; ($a$), ($b$), and ($c$) are the tangles $T$, $T'$, and $S$. Middle row: ($2$) the pattern $P'$ and arc $\gamma$; ($3$) applying the isotopy $F_1$ to $P'$ and $\gamma$; ($4$) the strongly invertible knot $J$. Bottom row: ($5$) the tangle $S$; ($6$) applying the isotopy $F_1$ to $S$; ($7$) the $\tau$-invariant Seifert framings of $J$.}\label{fig:2.3}
\end{figure}

Determining the surgery coefficient $p/q$ is slightly more involved. In order to do this, we must find the unique rational tangle $S$ in $B^3$ which lifts to a pair of $\tau$-equivariant Seifert framings of $J$; this is colored blue in Figure~\ref{fig:2.3}. Determining $S$ can be done by running $F_t$ backwards: in Figure~\ref{fig:2.3}, panel (7) shows the two $\tau$-invariant Seifert framings for $J$. The quotient of these by $\tau$ is displayed in (6), while in (5) we have reversed the isotopy $F_t$ to draw $S$ in the original $3$-ball $B^3$. By the Montesinos trick, the surgery coefficient $p/q$ is then precisely the rational number identified with the original tangle $T$ relative to the choice of reference tangles $\smash{T_\infty} = T'$ and $T_0 = S$. 

\begin{definition}\label{def:2.3}
Let $P$ be a rational unknotting number one tangle with a fixed choice of unknotting tangle replacement $T'$. As discussed above, this gives an identification
\[
\Sigma_2(P(U)) \cong S^3_{p/q}(J).
\]
We refer to $p/q$ as the \textit{coefficient} of $P$. Note that because $P$ is a knot, its branched double cover is a $\Z_2$-homology sphere; hence $p$ is necessarily odd. We say that $P$ is \textit{even} or \textit{odd} according to the parity of $q$ and \textit{positive} or \textit{negative} according to the sign of $p/q$.
\end{definition}

In our setting, it turns out that even and odd rational unknotting number one patterns behave rather differently. Fortunately, even though determining the exact coefficient $p/q$ of $P$ is rather difficult, the parity of $q$ can be easily read off from the tangle replacement:
\begin{lemma}{\cite[Corollary 2]{MZ}}\label{lem:2.4}
Let $P$ be a rational unknotting number one pattern with a fixed choice of unknotting tangle replacement $T'$. Then $q$ is even if and only if the tangle replacement is proper.
\end{lemma}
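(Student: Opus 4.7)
The plan is to reduce Lemma~\ref{lem:2.4} to the classical parity characterization of rational tangles, namely that the pairing of the four marked boundary points of $B^3$ induced by $T_{p/q}$ depends only on the parity of $q$. Granting this, the lemma is immediate: by Definition~\ref{def:2.3}, the coefficient $p/q$ is read off from the identification of $T$ with a rational number using the reference tangles $T_\infty = T'$ and $T_0 = S$. In this reference frame $T' = T_{1/0}$ has even denominator, so $T = T_{p/q}$ pairs the boundary points the same way as $T'$ precisely when $q$ is even. Since properness is by definition the condition that $T$ and $T'$ induce the same pairing on $\partial B^3$, the lemma follows.

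To establish the parity-pairing correspondence, I would induct on the length $n$ of a continued fraction expansion $p/q = [x_1, \ldots, x_n]$. The base cases are handled by inspection: $T_\infty = T_{1/0}$ and $T_0 = T_{0/1}$ are built from two boundary-parallel arcs realizing the two distinct pairings of the marked points, and $q = 0$ is even while $q = 1$ is odd, so the claim holds. For the inductive step, one verifies that each half-twist $h$ or $v$ transposes an adjacent pair of marked points on $\partial B^3$ and hence flips the global pairing type; simultaneously, passing from a length-$n$ continued fraction to a length-$(n{+}1)$ one toggles the base tangle between $T_\infty$ and $T_0$, and a short computation (essentially the Euclidean algorithm) shows that the two operations $h, v$ flip the parity of $q$ in lockstep with the change in pairing.

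The main obstacle is really just bookkeeping: the transformation rules for $h$ and $v$ on the fraction $p/q$ depend on the projection and sign conventions, and it is easy to be off by one parity if one is not careful. A more conceptual alternative goes through branched double covers. The branched cover $\Sigma_2(B^3, T_{p/q})$ is a solid torus sitting inside the torus $\Sigma_2(\partial B^3, \{4~\text{pts}\})$, whose meridional slope relative to the basis curves lifting $\{T_\infty, T_0\}$ is precisely $p/q$. The pairing of the four marked points then translates into whether a fixed boundary arc between two adjacent marked points lifts to a loop or to an arc in the branched cover, which is controlled by the mod-$2$ intersection number of this lift with the meridional slope---exactly the parity of $q$. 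Either approach establishes the key claim, and Lemma~\ref{lem:2.4} follows as described in the first paragraph.
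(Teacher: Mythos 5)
Your opening reduction is the same as the paper's: conjugate by the homeomorphism $F$ to the standard projection, note that properness is preserved, and then use that the replacement $T_{p/q}\to T_\infty$ is proper exactly when $q$ is even. The paper cites \cite[Lemma~11]{MZ} for this last fact (after observing that the distance between $T_{p/q}$ and $T_\infty$ is $q$); you attempt to prove it directly, and that is where the gap lies.

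The auxiliary claim you formulate---that the boundary pairing of $T_{p/q}$ depends only on the parity of $q$---is false. With $\gcd(p,q)=1$ there are \emph{three} possible pairings (vertical, horizontal, diagonal), determined by the slope $p/q$ reduced mod $2$: $q$ even gives the vertical pairing of $T_\infty$; $q$ odd and $p$ even gives the horizontal pairing of $T_0$; $q$ odd and $p$ odd gives the diagonal pairing of $T_{\pm 1}$. Your inductive step is also wrong: the half-twist $h$ does \emph{not} flip the pairing type; it fixes the vertical pairing (indeed $hT_\infty\simeq T_\infty$) and swaps horizontal with diagonal, while $v$ fixes the horizontal pairing and swaps vertical with diagonal. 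So the claimed ``lockstep'' between half-twists and the parity of $q$ does not exist as stated, and the branched-cover heuristic has the same problem---the correct invariant is the full slope in $\{0,1,\infty\}$ mod $2$, not merely $q$ mod $2$. The weaker statement you actually need (that $T_{p/q}$ pairs like $T_\infty$ iff $q$ is even) is true, but it is precisely the content of \cite[Lemma~11]{MZ}, and your sketch does not correctly establish it.
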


\begin{proof}
As in the discussion of Section~\ref{sec:2.1}, let $F$ be a homeomorphism of $B^3$ taking $T'$ and $S$ to the standard $\infty$- and $0$-tangles $T_\infty$ and $T_0$, respectively. Note that the tangle replacement $T$ to $T^{\prime}$ is proper if and only if the tangle replacement $F(T)$ (which is by definition the $p/q$-tangle with respect to the standard $\infty$- and $0$-tangles) to $T_\infty$ is proper. It was shown in \cite[Lemma 11]{MZ} that the distance between $T_{p/q}$ and $T_\infty$ is even if and only if the replacement $T_{p/q}$ to $T_\infty$ is proper. The result follows by observing that the distance between $T_{p/q}$ and the $T_\infty$ is precisely $q$.
\end{proof} 

Note that if $P$ is a negative pattern, then its mirror is positive. We thus lose no generality in considering the class of positive rational unknotting number one patterns. 

There is one more important piece of data associated to a rational unknotting number one pattern. This is the following:
\begin{definition}\label{def:2.5}
Let $P$ be a rational unknotting number one tangle with a fixed choice of unknotting tangle replacement $T'$. We define the \textit{linking number} of $P$ by
\[
\ell = \lk(J, \tmu).
\]
As $\tmu$ and $\tau \tmu$ are equally preferenced, $\ell$ is defined only up to sign. (Note that $\ell$ is well-defined since $J$ is independent of the choice of isotopy $F_t$, up to equivariant homeomorphism.)
\end{definition}


\subsection{Branched covers of satellites}\label{sec:2.3}

We now extend the discussion of the previous subsection to the branched double cover of $P(K)$ for an arbitrary companion knot $K$. First observe that in the algorithm of Section~\ref{sec:2.2}, the meridian $\mu$ of our pattern lifts to a symmetric unlink $\tmu \cup \tau \tmu$ in $S^3$ disjoint from $J$. Here, the fact that $\tmu \cup \tau \tmu$ is a two-component unlink follows from the condition that $P'$ is unknotted in $S^1 \times D^2$. Note that the data of $P$ comes with an orientation of $\mu$; we give $\tmu \cup \tau \tmu$ the lifted orientation. 
Let $K$ be an oriented knot in $S^3$. Recall that $P(K)$ can be constructed by taking the image of $P$ inside the gluing
\[
S^3 \cong (S^3 - N(\mu)) \cup_{\partial N(\mu)} (S^3 - N(K))
\]
formed by a boundary identification which maps a meridian of $\mu$ to a Seifert framing of $K$ and a longitude of $\mu$ to a meridian of $K$ (respecting the orientations). It follows from the discussion of the previous subsection that 
\[
\Sigma_2(P(K)) \cong (S^3_{p/q}(J) - N(\tmu) - N(\tau \tmu)) \cup_{\partial N(\tmu)} (S^3 - N(K)) \cup_{\partial N(\tau \tmu)} (S^3 - N(K)).
\]
Note that this manifold has an obvious involution. On $\smash{S^3_{p/q}(J) - N(\tmu) - N(\tau \tmu)}$, this involution is induced by the strong inversion on $J$, while elsewhere we simply exchange the two copies of $S^3 - N(K)$.

\begin{definition}\label{def:2.6}
Let $J$ be a strongly invertible knot and $\tmu \cup \tau \tmu$ be an oriented, symmetric unlink disjoint from $J$. We assume that $\tmu \cup \tau \tmu$ is given a $\tau$-invariant orientation. For any oriented knot $K$, define the \textit{double infection} $\smash{J_{K, \tmu}}$ by infecting $J$ twice: once using $K$ along $\tmu$ and once using $K$ along $\tau \tmu$. Note that $\smash{J_{K, \tmu}}$ is a strongly invertible knot.
\end{definition}

\begin{figure}[h!]
\center
\includegraphics[scale=0.9]{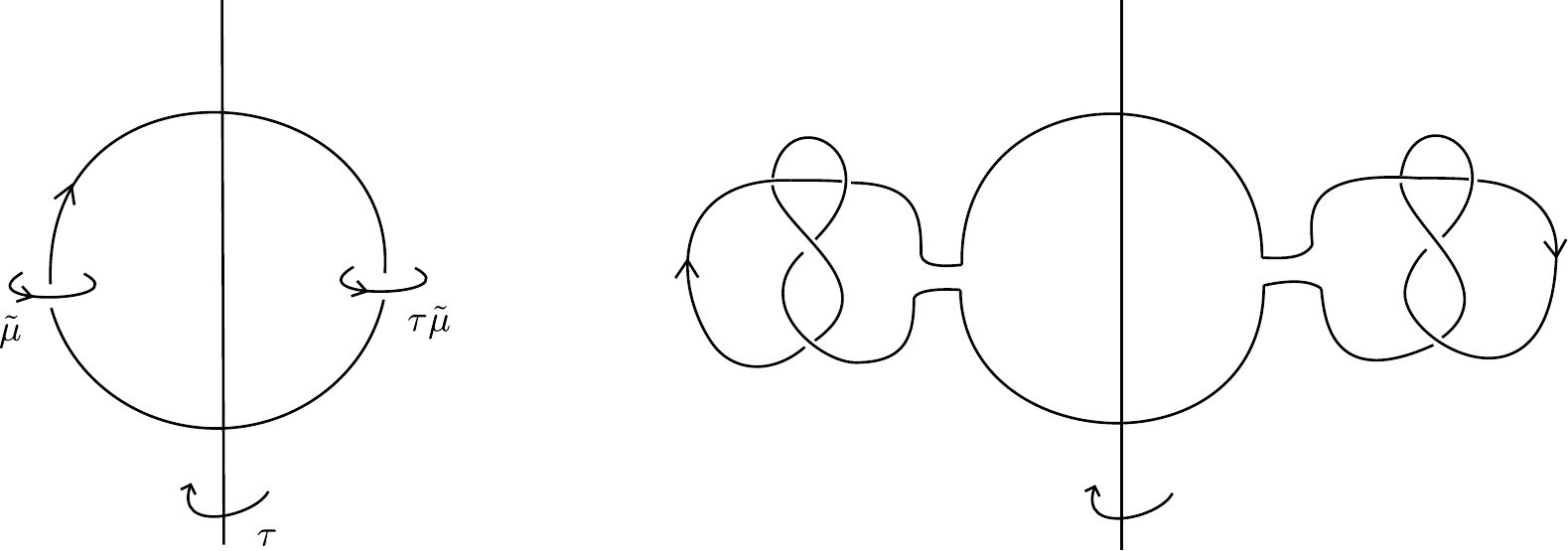}
\caption{In the trivial case where $J$ is the unknot and $\tmu$ is a standard meridian, we have $\smash{J_{K, \tmu}} = K \# K^r$. Note the reversal of orientation in the second factor; this is because $\tmu$ and $\tau \tmu$ are oriented such that $\lk(K, \tmu)  = -\lk(K, \tau \tmu)$.}\label{fig:2.4}
\end{figure}

We thus have:
\[
\Sigma_2(P(K)) \cong S^3_{p/q}(J_{K, \tmu}).
\]
Note that the surgery coefficient $p/q$ is independent of $K$ (and is the same as that of Definition~\ref{def:2.3}). Moreover, once again this homeomorphism identifies the branching action on $\Sigma_2(P(K))$ with the involution on $\smash{S^3_{p/q}(J_{K, \tmu})}$ induced by the strong inversion on $J_{K, \tmu}$. 

To spell out the relevance of this construction, recall that our general strategy is to study the family $\{P(K_n)\}_{n \in \N}$ via their branched double covers $\{\Sigma_2(P(K_n))\}_{n \in \N}$. If $P$ is a rational unknotting number one pattern, then this is the same as studying $p/q$-surgeries on the family of knots
\[
J_n = J_{K_n, \tmu}.
\]
We will use the fact that the $J_n$ are all (double) infections of the same knot in order to derive certain structural results regarding the Floer homologies of these surgeries. This will allow us to establish the desired linear independence.

\subsection{Examples}\label{sec:2.4}
We now give some examples of rational unknotting number one patterns. The simplest of these are \textit{rational tangle patterns}. A rational tangle pattern is obtained by taking the horizontal closure of a $p/q$-rational tangle in the standard projection, as in Figure~\ref{fig:2.5}. Clearly, each such pattern has rational unknotting number one, with the replacement tangle $T'$ being the standard $\infty$-tangle. (The resulting strongly invertible knot $J$ is the unknot.) A rational tangle pattern has linking number $\pm 1$ and surgery coefficient precisely $p/q$. Note that all multiply-clasped, multiply-twisted Whitehead doubling patterns fall into this class.

\begin{figure}[h!]
\center
\includegraphics[scale=0.5]{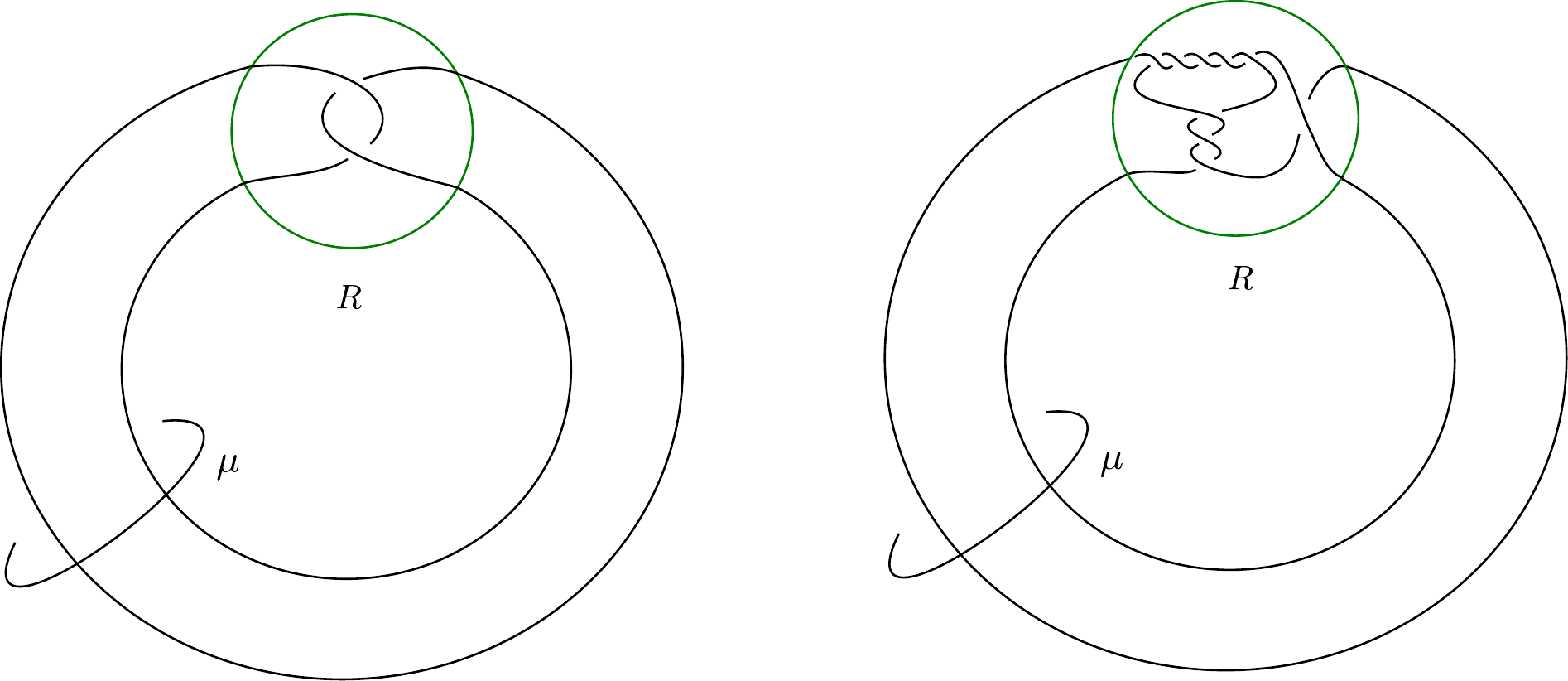}
\caption{The rational tangle patterns corresponding to $1/2$ and $21/16$.}\label{fig:2.5}
\end{figure}

One particularly simple way of constructing a rational unknotting number one pattern is to start from a Conway tangle $C$ with unknotted (horizontal) closure and glue it to any rational tangle $R$, as in Figure~\ref{fig:2.6}. We embed this in $S^1 \times D^2$ by choosing the indicated meridian $\mu$. The resulting pattern tautologically has rational unknotting number one by replacing $R$ with the usual $\infty$-tangle, and some thought shows that $\ell = \pm 1$. Note that rational tangle patterns are a special case of this construction, where $C$ is chosen to be the trivial tangle of two horizontal strands.

\begin{figure}[h!]
\center
\includegraphics[scale=0.5]{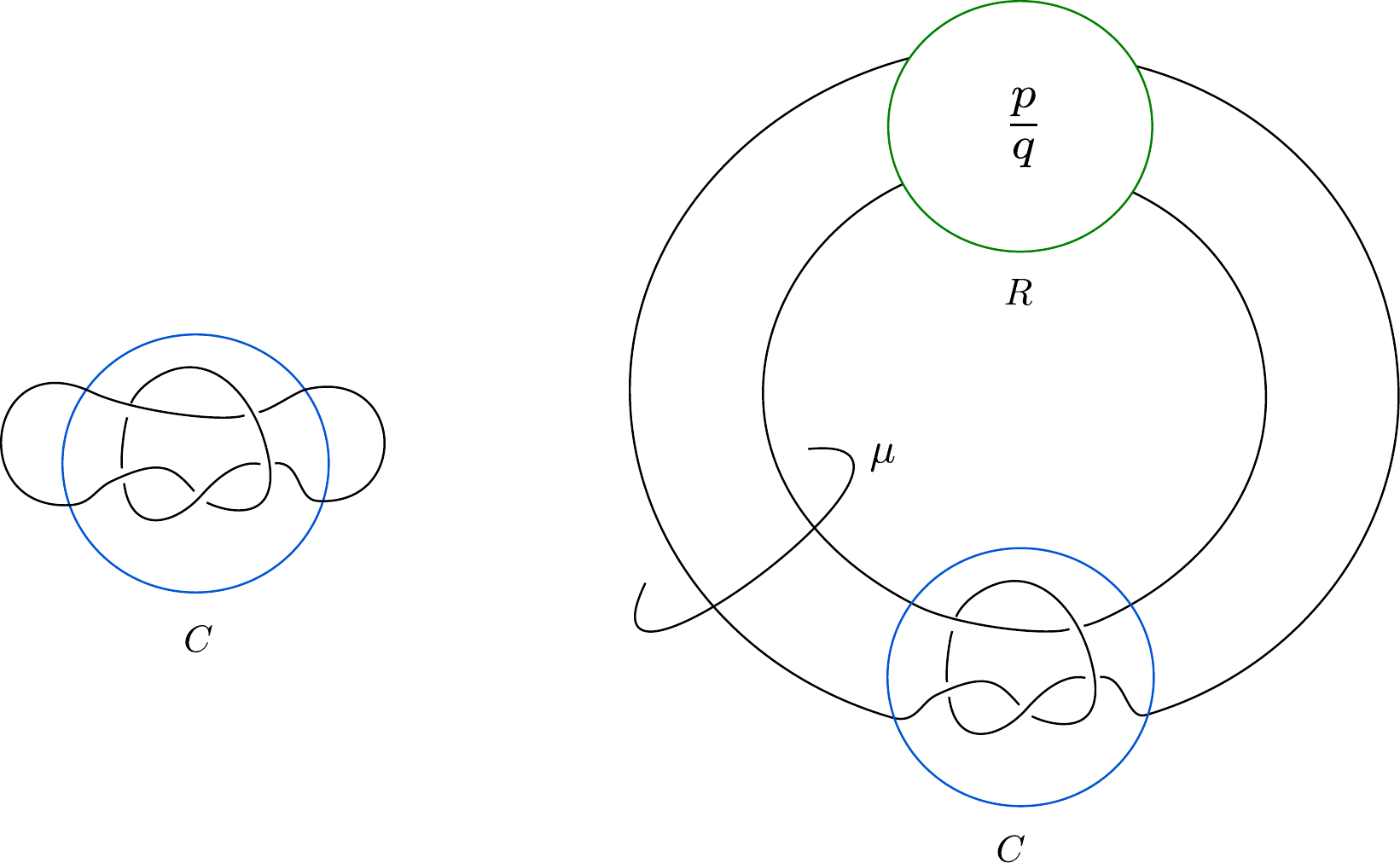}
\caption{Left: A tangle $C$ with an unknotted closure. Right: A rational unknotting number one pattern defined using $C$.}\label{fig:2.6}
\end{figure}

In general, it is often easier to construct a rational unknotting number one pattern by working backwards from the associated knot $J$. As in Definition~\ref{def:2.6}, let $J$ be a strongly invertible knot equipped with a symmetric unlink $\tmu \cup \tau \tmu$ disjoint from $J$. We furthermore require that $\tmu \cup \tau \tmu$ is in fact unlinked in the complement of the axis of symmetry. The algorithm of (for example) \cite[Section 1.1.12]{SavelievInvariants} allows us to express any surgery $\smash{S^3_{p/q}(J)}$ as the branched double cover over a knot. We turn this knot into a pattern by taking the image of $\tmu \cup \tau \tmu$ under the quotient map. Examples of this procedure are shown along the top rows of Figures~\ref{fig:2.7} and \ref{fig:2.8}. 

\begin{figure}[h!]
\center
\includegraphics[scale=.8]{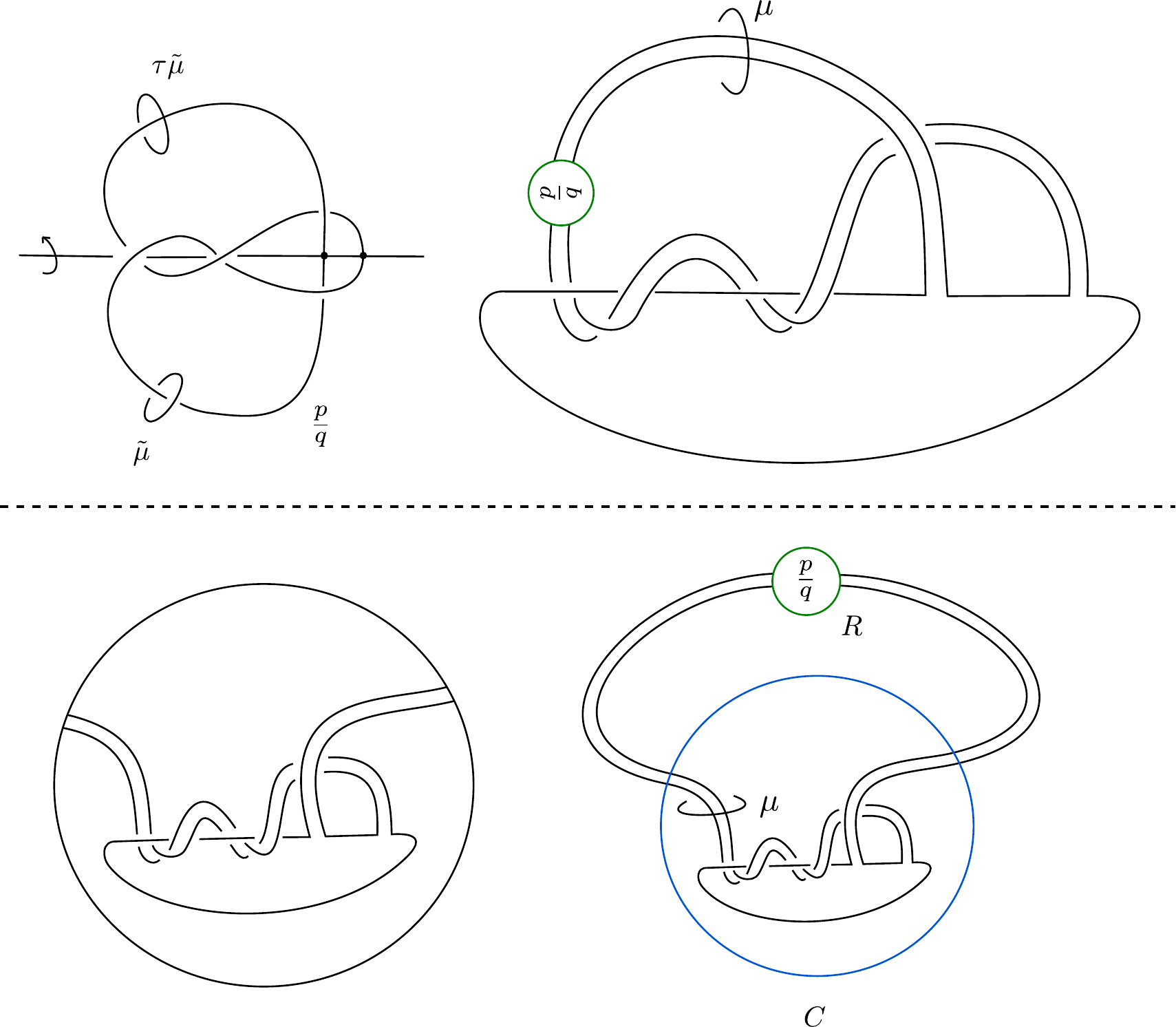}
\caption{A pattern whose double branched cover is surgery on the figure-eight knot, with $\tmu$ a standard meridian. Here, $\ell = 1$.}\label{fig:2.7}
\end{figure}

As shown along the bottom rows of Figures~\ref{fig:2.7} and \ref{fig:2.8}, the result may be viewed as a Conway tangle $C$ glued to a $p/q$-rational tangle $R$ in the standard projection. If $\tmu$ is chosen to be a meridian of $J$ (as in Figure~\ref{fig:2.7}), then the meridian $\mu$ of the resulting pattern will be obviously isotopic to a curve between $C$ and $R$, as in Figure~\ref{fig:2.6} but in general $\mu$ may be more complicated (as in Figure~\ref{fig:2.8}). (Different choices for $\tmu$ give patterns which are the same as knots, but which may have different embeddings in the solid torus.) It is clear from the algorithm of \cite[Section 1.1.12]{SavelievInvariants} that replacing the $p/q$-rational tangle with an $\infty$-tangle gives an unknot in the complement of $\mu$. The same algorithm likewise shows that the tangle $S$ coming from the Seifert framing of $J$ is the usual $0$-tangle in the standard projection. By construction, any such pattern has non-zero linking number as long as $\tilde{\mu}$ was chosen so that $lk(J,\tilde{\mu}) \neq 0$.

\begin{figure}[h!]
\center
\includegraphics[scale=.8]{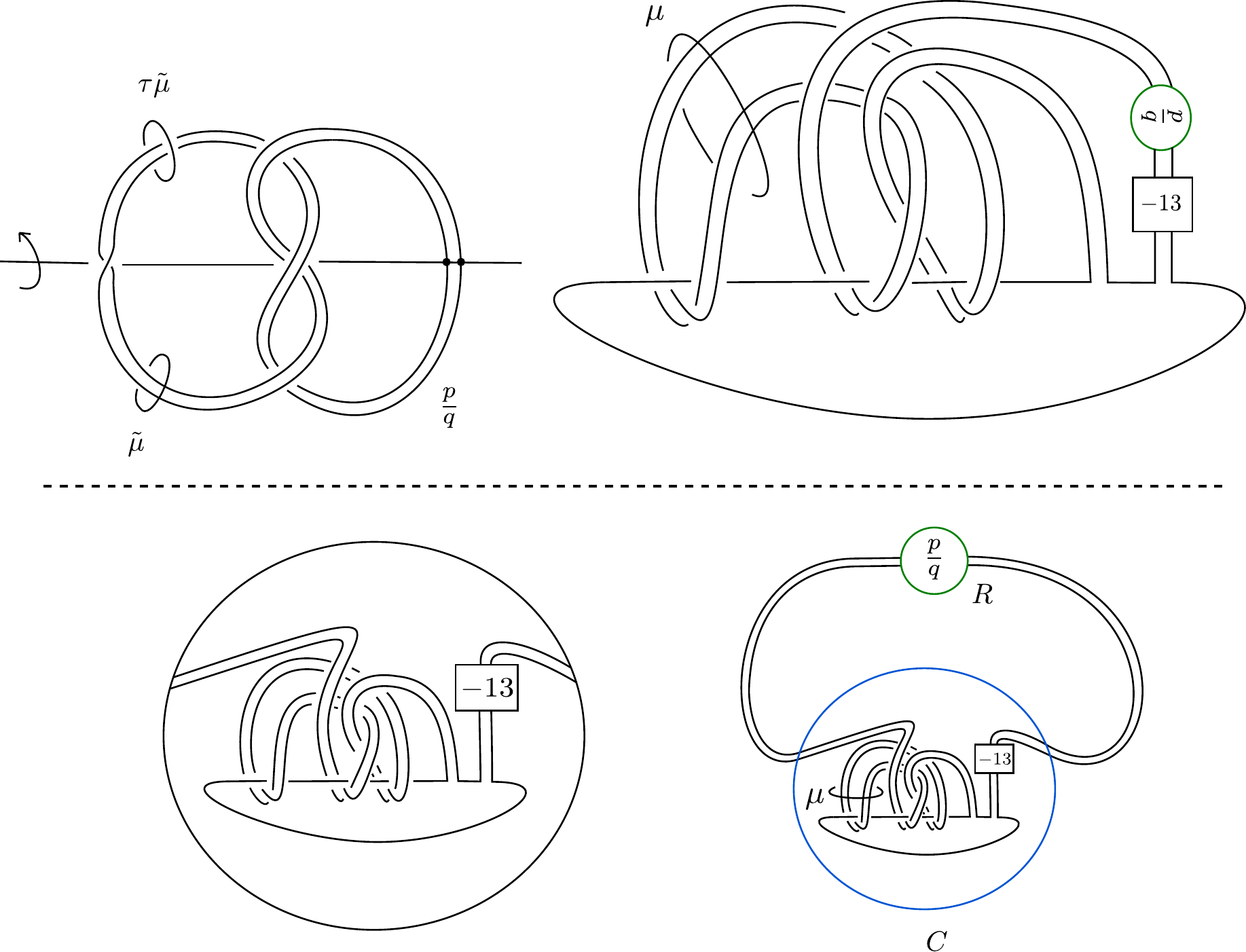}
\caption{A pattern whose double branched cover is surgery on the $(2,1)$-cable of the trefoil, with $\tmu$ as indicated. Here, $\ell = 2$.}\label{fig:2.8}
\end{figure}

\subsection{Local equivalence}\label{sec:2.5}
In this section, we give a brief overview of the involutive Heegaard Floer package and the machinery of local equivalence. Let $Y$ be a 3-manifold equipped with a self-conjugate $\spinc$-structures $\s=\bar{\s}$. In \cite{HM}, Hendricks and Manolescu defined a homotopy involution on the Heegaard Floer chain complex coming from the conjugation symmetry present in a Heegaard diagram:
\[
\iota: \CFm(Y,\s) \rightarrow \CFm(Y,\s).
\]
This additional data provides an enhancement of the usual Heegaard Floer invariant of Ozsv\'ath and Szab\'o \cite{OS3manifolds1, OS3manifolds2}. More precisely, associated to $(Y, \s)$, we may consider the pair $(\CFm(Y, \s), \iota)$; up to an appropriate notion of homotopy equivalence, this is a diffeomorphism invariant of $Y$. 

Although involutive Heegaard Floer homology is defined for all 3-manifolds, we will mainly be concerned with rational homology spheres. In this case, we formalize the resulting algebraic structure in the following definition:

\begin{definition}{\cite[Definition 8.1]{HMZ}}\label{def:2.8}
An {\em $\iota$-complex} is a pair $(C, \iota)$, where:
\begin{enumerate}
\item $C$ is a (free, finitely-generated) chain complex over $\ff[U]$ with
\[
U^{-1}H_*(C) \cong \ff[U, U^{-1}].
\]
Here, $\ff = \Z/2\Z$. We require $C$ to be graded by a coset of $\Z$ in $\Q$ and $U$ to be of degree $-2$.
\item $\iota : C \to C$ is a $\ff[U]$-equivariant, grading-preserving homotopy involution; that is, 
\[
\iota^2 \simeq \id
\]
via a $U$-equivariant chain homotopy.
\end{enumerate}
\end{definition}

In \cite{HMZ}, it is shown that if $Y$ is a rational homology sphere equipped with a self-conjugate $\spinc$-structures $\s$, then the pair $(\CFm(Y, \s), \iota)$ is an $\iota$-complex.

In order to study homology cobordism, we introduce the following equivalence relation:

\begin{definition}{\cite[Definition 8.3]{HMZ}}\label{def:2.7}
Two $\inv$-complexes $(C, \iota)$ and $(C', \iota')$ are called {\em locally equivalent} if there exist $\F[U]$-equivariant, grading-preserving chain maps
\[
f \colon C \to C' \quad \text{and} \quad g \colon C' \to C
\]
such that 
\[
f \circ \iota \simeq \iota' \circ f \quad \text{and} \quad g \circ \iota' \simeq \iota \circ g
\]
and $f$ and $g$ induce isomorphisms on homology after localizing with respect to $U$. We call a map $f$ as above a \textit{local map} from $(C, \iota)$ to $(C', \iota')$, and similarly we refer to $g$ as a local map in the other direction.
\end{definition}

To see the relevance of Definition~\ref{def:2.7} to homology cobordism, let $(Y_1, \s_1)$ and $(Y_2, \s_2)$ be two rational homology spheres equipped with self-conjugate $\spinc$-structures and let $W$ be a rational homology cobordism from $Y_1$ and $Y_2$. Suppose $W$ admits a self-conjugate $\spinc$-structure $\s$ restricting to $\s_i$ on each $Y_i$. Then the Heegaard Floer cobordism map $F_{W, \s}$ (together with its reverse) constitutes a local equivalence between the $\iota$-complexes associated to $(Y_1, \s_1)$ and $(Y_2, \s_2)$.

In the setting of $\Z$- or $\Z_2$-homology cobordism, we always have a unique self-conjugate $\spinc$-structure on $Y$ or $W$. In this case, we may unambiguously associate to $Y$ its \textit{local equivalence class}; this is a homology cobordism invariant. Denote
\[
\Inv = \{\text{all $\iota$-complexes}\}\ /\ \text{local equivalence}.
\]
We then obtain a map $\smash{h \colon \Theta^{3}_{\mathbb{Z}_{2}} \rightarrow \Inv}$ given by sending
\[
[Y] \mapsto h([Y]) = [(\CFm(Y, \s)[-2], \iota)].
\]
Here, $\s$ is the unique self-conjugate $\spinc$-structure on $Y$ and the $[-2]$ is a formal (unimportant) grading shift. In \cite[Section 8.3]{HMZ} it is shown that $\Inv$ is an abelian group with the operation of tensor product. The identity element is given by the trivial complex $\ff[U]$ and inverses are given by dualizing; see \cite[Section 8.3]{HMZ} for details.\footnote{Strictly speaking, our notation $\Inv$ is not quite the group $\Inv$ of \cite[Section 8.3]{HMZ}. The difference is that here, we allow our $\iota$-complex to have gradings valued in $\Q$, rather than $\Z$.} Moreover, it is shown that $h$ is a well-defined homomorphism.

It is thus possible to show that a given family of $\Z_2$-homology spheres is linearly independent by computing their local equivalence classes and establishing their linear independence in $\Inv$. However, this requires an analysis of the algebraic structure of $\Inv$. Although in general $\Inv$ is very complicated, techniques for carrying out this strategy have been developed in (for example) \cite{DaiManolescu, DaiStoffregen, HHL, DHSTcobord}. The results of the current article will depend on several such calculations, which we outline in the next subsection.

\subsection{Linear independence in $\Inv$}\label{sec:2.6}
We will need to be familiar with the following especially simple class of $\iota$-complexes:

\begin{definition}\label{def:2.9}
For $i \in \N$, define $X_i$ to be the $\iota$-complex generated over $\F[U]$ by three elements: $x$, $\iota x$, and $\alpha = \iota \alpha$. These have gradings given by $\gr(x) = \gr(\iota x) = 0$ and $\gr(\alpha) = -2i + 1$; the differential is defined by $\partial \alpha = U^i(x + \iota x)$. We likewise have the dual complex $X_i^\vee$, which is generated by $x^\vee$, $\iota x^\vee$, and $\alpha^\vee = \iota \alpha^\vee$. These have gradings given by $\gr(x^\vee) = \gr(\iota x^\vee) = 0$ and $\gr(\alpha^\vee) = 2i - 1$; the differential is defined by $\partial x^\vee = \partial \iota x^\vee = U^i \alpha^\vee$. The complexes $X_i$ and $X_i^\vee$ are displayed in Figure~\ref{fig:Xi}, along with their homologies. We will often also write $X_i$ for the local equivalence class of $X_i$ in $\Inv$, and similarly for $X_i^\vee$.
\end{definition}

\begin{figure}[h!]
\center
\includegraphics[scale=0.8]{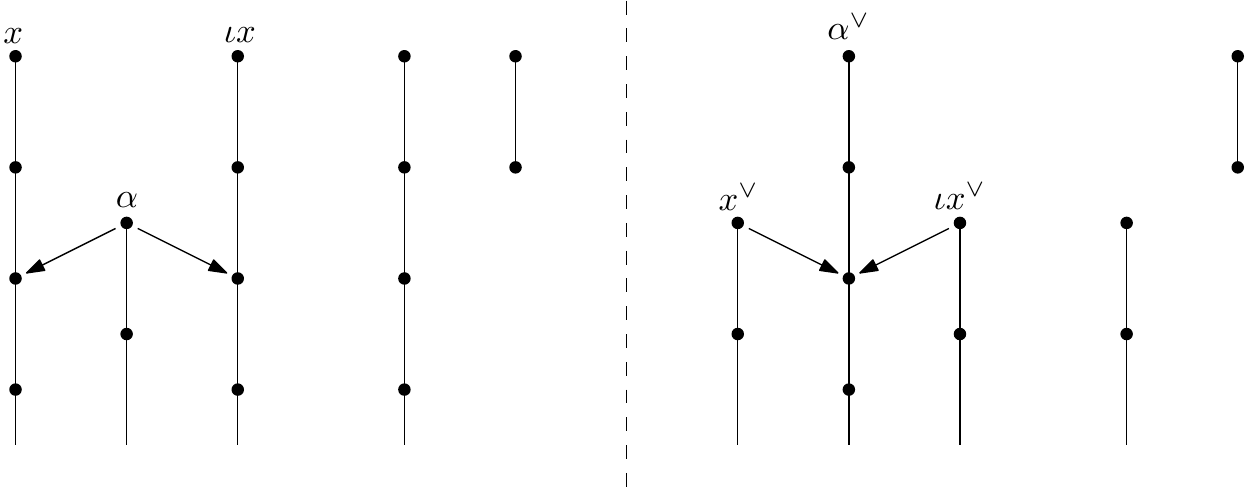}
\caption{Left: the complex $\smash{X_2}$ and its homology. Right: the complex $\smash{X_2^\vee}$ and its homology. In both cases, generators over $\F$ are represented by dots; the action of $U$ is given by following the vertical line segments downwards. For the two chain complexes, the action of $\partial$ is given by extending the indicated arrows $U$-equivariantly.}\label{fig:Xi}
\end{figure}

The $X_i$ turn out to be fundamental for understanding the structure of $\Inv$ and occur as the local equivalence classes of several families of homology spheres; see for example \cite{DaiManolescu}. We have the following basic fact regarding the $X_i$:

\begin{theorem}{\cite[Proof of Theorem 1.7]{DaiManolescu}}\label{thm:2.10}
For $i \in \N$, the classes $X_i$ are linearly independent in $\Inv$.
\end{theorem}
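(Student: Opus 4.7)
The plan is to suppose for contradiction that $\sum_i n_i X_i \sim_{\mathrm{loc}} 0$ for some nontrivial integer coefficients $n_i$, and to derive a contradiction. Since inverses in $\mathfrak{I}$ are given by dualization, separating positive and negative coefficients reduces the task to showing that whenever the $n_i$ are not all zero, we cannot have a local equivalence
\[
A := \bigotimes_{n_i > 0} X_i^{\otimes n_i} \ \sim_{\mathrm{loc}} \ \bigotimes_{n_i < 0} X_i^{\otimes |n_i|} =: B.
\]
Both $A$ and $B$ are now ``positive'' tensor products of the $X_i$ (with no duals appearing), so the problem becomes one of separating these tensor products by a suitable local-equivalence invariant.

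First I would verify that the individual $X_i$ are pairwise distinct from each other and from all of their duals. The cleanest invariant for this is
\[
r(C) := \min\bigl\{r \geq 0 : U^r(1+\iota)x \simeq 0 \text{ for some cycle } x \text{ representing a tower generator of } C\bigr\}.
\]
A direct inspection of the complexes drawn in Figure~\ref{fig:Xi} shows $r(X_i) = i$, with the class $\alpha$ serving as the witnessing chain homotopy, while $r(X_i^\vee) = 0$, since the tower generator $x^\vee + \iota x^\vee$ of $X_i^\vee$ is already annihilated by $1+\iota$. The quantity $r$ is manifestly preserved by local equivalence because any local map sends a tower generator to a tower generator and intertwines $1+\iota$ up to chain homotopy. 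This already establishes pairwise distinctness.

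To handle arbitrary tensor products, one refines $r$ into a sequence of nonnegative integer invariants $\{h_r\}_{r \geq 1}$, where $h_r(C)$ measures the multiplicity with which the relation $U^r(1+\iota) \simeq 0$ arises in $C$ beyond what is already accounted for at strictly smaller depths. The key computations to establish are $h_r(X_i) = \delta_{ri}$, $h_r(X_i^\vee) = 0$, and additivity $h_r(C \otimes C') = h_r(C) + h_r(C')$ under tensor product. Granting these, evaluating $h_r$ on both sides of $A \sim_{\mathrm{loc}} B$ yields $h_r(A) = \max(n_r, 0)$ and $h_r(B) = \max(-n_r, 0)$, which forces $n_r = 0$ for every $r$ and completes the proof.

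The main obstacle is the precise construction of the $h_r$ and the verification of their additivity. The subtlety is that in a tensor product $C \otimes C'$, torsion corrections to a candidate tower generator can in principle produce new witnesses to $U^r(1+\iota)$-triviality at depths strictly smaller than those appearing in the individual factors, which would violate additivity unless one argues that every such cancellation can be attributed to a relation already counted at a lower depth. Controlling this interaction between $\iota$, $U$, and the differential on a tensor product is the technical core of the argument, and it is carried out in detail in the cited Dai--Manolescu proof of linear independence for the family $\{\Sigma(2,3,6k+1)\}_{k \geq 1}$ in the homology cobordism group.
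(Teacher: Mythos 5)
Your reduction to $A := \bigotimes_{n_i>0} X_i^{\otimes n_i} \sim_{\mathrm{loc}} \bigotimes_{n_i<0} X_i^{\otimes|n_i|} =: B$ is correct and standard, and the invariant $r$ you define (the minimal $U$-power killing $(1+\iota)$ on a tower generator, minimized over choices of tower generator) is indeed a local-equivalence invariant, with $r(X_i)=i$ and $r(X_i^\vee)=0$ exactly as you compute. However, the proof as written has a genuine gap, and the gap is located exactly where you place the ``technical core.'' You propose an additive sequence of invariants $h_r$ with $h_r(X_i)=\delta_{ri}$ but never define them, and the existence of such an additive family with those values on the $X_i$ would already exhibit $\bigoplus_i \Z\langle X_i\rangle$ as a summand of $\Inv$ --- a strictly stronger statement than the theorem. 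Asserting that the construction is ``carried out in detail'' in the Dai--Manolescu reference does not repair this: to my knowledge Dai--Manolescu do not construct additive $h_r$-type invariants (that additive framework belongs to the later work of Dai--Hom--Stoffregen--Truong, cited elsewhere in this paper), and in any case one cannot defer the entire content of the theorem to an undefined invariant.

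What makes the gap more pointed is that your $h_r$ machinery is unnecessary for the statement at hand, precisely because after the reduction the index sets on the two sides of $A \sim_{\mathrm{loc}} B$ are \emph{disjoint}. A single integer invariant with a $\max$-type behavior already finishes the job: it suffices to show
\[
r\bigl(X_{i_1}\otimes\cdots\otimes X_{i_k}\bigr)=\max_j i_j,
\]
since then $r(A)=\max\{i : n_i>0\}$ and $r(B)=\max\{i: n_i<0\}$ cannot coincide unless both sides are empty. The tensor-product identity is the actual work, and it is not automatic for the reason you yourself flag: one must rule out that adding a torsion cycle $z$ to the canonical tower generator $x=\bigotimes x_{i_j}$ could lower the $U$-torsion order of $(1+\iota)$. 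This can be done by a direct bookkeeping argument: any degree-zero cycle of $\bigotimes X_{i_j}$ is an $\F[U]$-combination of the elements $\bigotimes_{j\in S}a_j\otimes\bigotimes_{j\notin S}x_j$ (with $a_j=(1+\iota)x_j$), and $(1+\iota)$ applied to any such element with $|S|\ge 1$ only produces terms with $|T|>|S|$. Hence the ``$|S|=1$'' component of $(1+\iota)(x+z)$ is always $\sum_j a_j\otimes x_{\mathrm{rest}}$, independent of $z$, and the term $U^{\max-1}a_{j_0}\otimes x_{\mathrm{rest}}$ (for $i_{j_0}$ the maximum) represents a nonzero homology class not cancelled by the others. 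This is the step that must appear in a complete proof, whether phrased via $r$ as you set it up or via the involutive correction terms $\bar d,\underline d$ (which is, as I understand it, closer to what Dai--Manolescu actually do).
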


We will also need a generalization of Theorem~\ref{thm:2.10} which follows from the proof of \cite[Proof of Theorem 1.3]{DHM}. In order to state this, we recall some notation. If $(C, \iota)$ is an $\iota$-complex, then it follows from the first condition of Definition~\ref{def:2.8} that $H_*(C)$ is isomorphic to an $\F[U]$-module of the form $\F[U] \oplus (U\text{-torsion})$. The grading of the uppermost generator of the copy of $\F[U]$ is well-defined and gives the \text{$d$-invariant} $d(C)$. 

\begin{definition}\label{def:2.11}
If $(C_1, \iota_1)$ and $(C_2, \iota_2)$ are two $\iota$-complexes with $d(C_1) = d(C_2)$, then we write $(C_1, \iota_1) \leq (C_2, \iota_2)$ if there is a local map from $(C_1, \iota_1)$ to $(C_2, \iota_2)$.
\end{definition}

We will often suppress writing $\iota_1$ and $\iota_2$ in the inequality (and when discussing local equivalence). Note that if $C_1$ is locally equivalent to $C_2$, then automatically $d(C_1) = d(C_2)$. However, in general, a local map from $C_1$ to $C_2$ only guarantees $d(C_1) \leq d(C_2)$; hence Definition~\ref{def:2.11} is stronger than the presence of a local map.\footnote{This is essentially matter of notational convention. Elsewhere in the literature, it is often assumed that $d(C) = 0$ for convenience, in which case an inequality is indeed equivalent to the existence of a local map.} It turns out that Definition~\ref{def:2.11} defines a partial order on $\Inv$; for a discussion of the importance of Definition~\ref{def:2.11}, see \cite{DHSTcobord}.

\begin{theorem}{\cite[Proof of Theorem 1.3]{DHM}}\label{thm:2.12}
Let $\{C_i\}_{i \in \N}$ be a sequence of $\iota$-complexes. Suppose there exists a sequence $(n_i)_{i \in \N}$ such that $n_i \rightarrow \infty$ and $C_i \leq X_{n_i}$ for each $i$. Then $\{C_i\}_{i \in \N}$ has infinite rank in $\Inv$.
\end{theorem}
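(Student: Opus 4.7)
The plan is to argue by contradiction, following the strategy in the proof of \cite[Theorem 1.3]{DHM}. Suppose for contradiction that $\{[C_i]\}_{i \in \N}$ spans a finite-rank subgroup of $\Inv$, so that there is a nontrivial integer relation $\sum_{i=1}^{k} a_i [C_i] = 0$ in $\Inv$. Since $n_i \to \infty$, after reindexing I may assume $n_k > n_i$ for every $i < k$ with $a_i \ne 0$, while $a_k \ne 0$. This isolates a single index whose $X$-bound strictly dominates all the others appearing in the relation.

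The next step is to translate the hypothesis $C_i \le X_{n_i}$ into an algebraic statement in $\Inv$. In additive notation this hypothesis is equivalent to $E_i := [X_{n_i}] - [C_i] \ge 0$; concretely, $X_{n_i} \otimes C_i^{\vee}$ admits a local map from the trivial complex $\F[U]$. Substituting $[C_i] = [X_{n_i}] - E_i$ into the relation gives
\[
\sum_{i=1}^{k} a_i [X_{n_i}] \;=\; \sum_{i=1}^{k} a_i E_i \;=\; P - N,
\]
where $P := \sum_{a_i > 0} a_i E_i$ and $N := \sum_{a_i < 0} |a_i| E_i$ are both $\ge 0$ in $\Inv$. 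I am thereby reduced to showing that no such identity can hold when $a_k \ne 0$ and $n_k$ strictly exceeds every other $n_i$ on the left --- a statement now phrased purely in terms of tensor products of $X_m$'s, $X_m^{\vee}$'s, and nonnegative classes.

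To rule this out I would apply a monotone numerical invariant $\phi \colon \Inv \to \Q$ --- for instance one of the involutive $d$-invariants $\du, \dl$, or a refined Fr{\o}yshov-type invariant as in \cite{DHM} --- that (i) respects the partial order $\le$, (ii) is controllably additive under tensor product, and (iii) on iterated tensor products of $X_m$'s and $X_m^{\vee}$'s detects the largest index $m$ present. Property (iii) forces $\phi$ of the left-hand side to register the presence of $n_k$, while properties (i) and (ii) bound $\phi(P - N)$ by quantities depending only on the strictly smaller $n_i$; the resulting numerical inequality forces $a_k = 0$, contradicting the choice of relation. The main obstacle is producing such a $\phi$ with all three properties sharply enough to conclude: one-sided local maps admit only monotone invariants, and tensoring mixes contributions from different $X_{n_i}$'s in nontrivial ways, so one must track the graded structure of $X_{n_k}$ through the iterated tensor product and locate a specific grading level at which its contribution is unavoidable. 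This is the technical core borrowed from \cite[Theorem 1.3]{DHM}, of which Theorem~\ref{thm:2.12} is essentially a direct repackaging under the hypothesis $C_i \le X_{n_i}$.
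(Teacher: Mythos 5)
Your sketch identifies the right high-level strategy, but the step you explicitly defer --- producing the monotone invariant $\phi$ --- is the entire content of the theorem, and the single numerical quantities you name ($\du$, $\dl$, Fr{\o}yshov-type invariants) are not strong enough to carry it out. The invariant the paper relies on (following the proof of \cite[Theorem 1.3]{DHM}) is the connected homology $\Hc$, a $U$-torsion $\F[U]$-module, and specifically the lengths of its $U$-torsion towers. The two technical inputs are \cite[Lemma 7.11]{DHM}, which shows that $C \leq X_n$ forces a $U$-torsion tower of length at least $n$ in $\Hc(mC)$ for every $m \neq 0$, and \cite[Lemma 7.10]{DHM}, which bounds the tower lengths appearing in $\Hc$ of any element of the span of $C_{i_1}, \dots, C_{i_{p-1}}$ in terms of those of the individual $\Hc(C_{i_j})$. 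Your conditions (i)--(iii) are the abstract shadows of these two lemmas; the ``main obstacle'' you flag is the heart of the matter, not a technicality to be borrowed.

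Two structural points distinguish your outline from the paper's argument. The paper argues by direct construction rather than contradiction: at the $p$-th stage pick $i_p$ with $n_{i_p}$ exceeding every torsion tower length in $\Hc(C_{i_1}), \dots, \Hc(C_{i_{p-1}})$ (possible since $n_i \to \infty$), then conclude from Lemma 7.10 that no nonzero multiple of $C_{i_p}$ lies in their span. This sidesteps a gap in your setup: given a generic nontrivial relation, you cannot always reindex so that a single $n_k$ is the strict maximum over the support --- ties among the $n_i$ can occur, in which case the $X_{n_i}$-terms coincide even though the corresponding $C_i$'s need not, and the ``dominant'' contribution can cancel without the relation being trivial. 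Finally, the rewriting $\sum a_i [X_{n_i}] = P - N$ with $P, N \geq 0$ is formally valid in the partially ordered group $\Inv$, but it gives little purchase, since $P$ and $N$ are unconstrained nonnegative classes; the paper instead extracts the usable information from the one-sided inequalities $C_i \leq X_{n_i}$ directly via Lemma 7.11, never passing through this decomposition.
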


\begin{proof}
We sketch the proof for the convenience of the reader. It was shown in \cite[Lemma 7.11]{DHM} that for any local equivalence class $C_i$ with $C_i \leq X_{n_i}$, the connected homology $\Hc(mC_i)$ (see \cite{HHL} for a definition) has a $U$-torsion tower of length at least $n_i$ whenever $m \neq 0$. We then build a infinite linearly independent subsequence of the $C_{i}$ as follows. At the $p$-th stage, let $i_p$ be any integer for which $n_{i_{p}}$ is larger than the maximal $U$-torsion tower length appearing amongst $\Hc(C_{i_1})$, $\Hc(C_{i_2}), \cdots, \Hc(C_{i_{p-1}})$. It follows $\Hc(mC_{i_p})$ has a $U$-torsion tower of length at least $n_{i_{p}}$. According to \cite[Lemma 7.10]{DHM}, this implies that $mC_{i_{p}}$ does not lie in the the span of $C_{i_1}$, $C_{i_2}, \cdots, C_{i_{p-1}}$ for any $m \neq 0$. Hence $\{C_i\}_{i \in \N}$ has infinite rank in $\Inv$.
\end{proof}

For concreteness, in Definition~\ref{def:2.9} we have normalized the $X_i$ such that each $d(X_i) = 0$. However, Theorems~\ref{thm:2.10} and \ref{thm:2.12} hold more generally upon apply a grading shift to each $X_i$ or $X_{n_i}$. More precisely, for any sequence of integers $d_i$, the grading-shifted classes $X_i[d_i]$ are linearly independent. Similarly, if $\{C_i\}_{i \in \N}$ is a sequence of $\iota$-complexes with $C_i \leq X_{n_i}[d_i]$, then $\{C_i\}_{i \in \N}$ again has infinite rank in $\Inv$. Note that in the latter case, Definition~\ref{def:2.11} requires $d_i = - d(C_i)$; otherwise Theorem~\ref{thm:2.12} is false.\footnote{Our convention for the grading-shift notation is that an element of $C$ in grading zero has grading $-\Delta$ in $C[\Delta]$.} These minor extensions easily follow from considering the splitting
\[
\Inv = \Inv_0 \oplus \Z,
\]
where $\Inv_0$ is the subgroup of $\Inv$ consisting of all $\iota$-complexes with $d$-invariant zero.

\subsection{Involutive surgery formula}\label{sec:2.7}
Let $K$ be an oriented knot in $S^3$. In \cite{HM}, Hendricks and Manolescu defined a grading-preserving, skew-filtered homotopy involution 
\[
\iota_K \colon \CFK^\infty(K) \rightarrow \CFK^\infty(K)
\]
on the knot Floer complex of $K$. As in the 3-manifold case, the (filtered) homotopy equivalence class of the pair $(\CFK^\infty(K), \iota_K)$ is a diffeomorphism invariant of $K$. Although in general the action of $\iota_K$ is difficult to compute, there are a wide variety of cases in which $\iota_K$ is determined for formal reasons; these include thin knots and L-space knots (see \cite[Section 7]{HM} and \cite[Section 8]{HM}). 

In this paper, we will mainly utilize the involutive knot Floer package in the context of the involutive surgery formula. To state this, we first review some notation from the usual knot Floer surgery formula. For $s \in \mathbb{Z}$, let $A^{-}_{s}(K)$ be the subcomplex of $\CFK^\infty(K)$ given by:
\[
A^{-}_{s}(K) = \text{span}_{\ff[U]} \{ [\x,i,j], \; \textrm{such that} \; i \leq 0, j \leq s \}.
\]
Let $B^{-}_{s}(K)$ be the subcomplex of $\CFK^\infty(K)$ given by:
\[
B^{-}_{s}(K) = \text{span}_{\ff[U]}\{ [\x,i,j], \; \textrm{such that} \; i \leq 0 \}.
\]
Note that $H_*(B^{-}_{s}(K)) \cong \F[U]$ for any $s$. We also have the inclusion map 
\[
v: A^{-}_{s}(K) \rightarrow B^{-}_{s}(K).
\]
See \cite{OSknots, Rasmussen} for further discussion.

In \cite[Section 6]{HM}, Hendricks and Manolescu established a large surgery formula by showing that for any integer $p \geq g(K)$, there is a relatively graded homotopy equivalence
\[
(\CFm(S^3_p(K), [0]), \iota) \simeq (A^-_0(K), \iota_K).
\]
Note that $\iota_K$ preserves $A^-_0(K)$. In \cite{HHSZ} Hendricks, Hom, Zemke and the fourth author extended this to a general surgery formula for computing $\smash{(\CFm(S^3_{p/q}(K), [0]), \iota)}$. The local equivalence class of the resulting $\iota$-complex is easily described. The following will be the main technical tool used in this paper:

\begin{theorem}\cite[Proposition 22.9]{HHSZ}\label{thm:2.13}
Let $p$ and $q$ be positive, relatively prime integers. Suppose $p$ is odd, so that $[0]$ is the unique self-conjugate $\spinc$-structure on $\smash{S^{3}_{p/q}(K)}$. Then:
\begin{enumerate}
\item If $q$ is odd, $(\CFm(S^{3}_{p/q},[0]), \iota)$ is locally equivalent to $(A^{-}_{0}(K), \iota_K)$.
\item If $q$ is even, $(\CFm(S^{3}_{p/q},[0]), \iota)$ is locally equivalent to truncated mapping cone complex below:
\[\begin{tikzpicture}
   
\draw[-{Latex[round]}] (0,0) node [left] {$A^{-}_{[p/{2q}]}$} -- ++(1.4,-1.4) node [below] {$B^{-}_{[p/{2q}]}$};
\draw[-{Latex[round]}] (3,0) node [right] {$A^{-}_{[p/{2q}]}$} -- ++(-1.3,-1.4);

  \end{tikzpicture}\]
\end{enumerate}
Here, $[n]$ represents the integer closest to $n$.\footnote{No half-integers for $n$ appear in this article.} The $\iota$-action on this complex is given by interchanging the two copies of $A^{-}_{[p/2q]}$ and fixing $B^{-}_{[p/{2q}]}$.
\end{theorem}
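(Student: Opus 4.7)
The plan is to invoke the full involutive mapping cone surgery formula of Hendricks--Hom--Stoffregen--Zemke and then carry out an explicit local-equivalence reduction in the self-conjugate $\spinc$-structure. Recall that the Ozsv\'ath--Szab\'o rational surgery formula presents $\CFm(S^{3}_{p/q}(K),\mathfrak{s})$ as the mapping cone of a surgery map
\[
D \colon \bigoplus_{s} A^{-}_{s}(K) \longrightarrow \bigoplus_{s} B^{-}_{s}(K),
\]
where the index set is a translate of $\{s \in \Z : -\lfloor p/2q\rfloor \le s \le \lfloor p/2q\rfloor\}$ (after suitable truncation) determined by $\mathfrak{s}$, $p$, and $q$. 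In \cite{HHSZ}, this mapping cone is upgraded to an $\iota$-complex by combining the knot involution $\iota_{K}$ acting on each summand (identifying $A^{-}_{s}$ with $A^{-}_{-s}$ via conjugation symmetry, and similarly for $B^{-}_{s}$) with the reflection symmetry $s \leftrightarrow -s$ on the index set, together with a shift coming from the flip map exchanging the two legs of the cone.

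The first step is to identify the self-conjugate $\spinc$-structure $[0]$ among the index sets. Since $p$ is odd, there is a unique such structure, and its index set is symmetric about zero. The second step is to analyze the fixed points of the reflection $s \leftrightarrow -s$ according to the parity of $q$. When $q$ is odd, the value $s=0$ lies in the index set and is a genuine fixed point: the summand $A^{-}_{0}$ is preserved by $\iota$ (which restricts there to $\iota_{K}$), while every other summand $A^{-}_{s}$ for $s \ne 0$ is exchanged with $A^{-}_{-s}$ and similarly for the $B^{-}_{s}$. A standard argument shows that swapped pairs contribute acyclic summands to any $U$-localized local-equivalence representative, so these pairs cancel and the surviving local equivalence class is $(A^{-}_{0}(K),\iota_{K})$.

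When $q$ is even, the index set does not contain $0$; instead the reflection has no fixed index, but the truncation of the cone produces a symmetric pair of extremal $A^{-}_{[p/2q]}$ terms each mapping into the same $B^{-}_{[p/2q]}$ term. The involution on this extremal piece is forced by symmetry: it must swap the two $A^{-}_{[p/2q]}$ summands and fix $B^{-}_{[p/2q]}$ (up to the action of $\iota_{K}$). The interior summands again pair off and are killed by local equivalence, leaving precisely the length-two mapping cone in the statement. To make this rigorous, one first verifies that the $\iota$-action from \cite{HHSZ} preserves the filtration by $|s|$, so that the interior cancellations are compatible with the involution; then one exhibits explicit local maps in both directions between the full truncated mapping cone and its two-term quotient.

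The main obstacle is the careful bookkeeping of the homotopies involved: the full involution on the surgery cone has off-diagonal components (involving $\iota_{K}$ together with the flip map and various shift homotopies), and one must ensure that the collapsing argument preserves the equivariance of the local maps up to $U$-equivariant chain homotopy. In the present setting this amounts to checking that the off-diagonal terms vanish after $U$-inversion on the cancelled pairs and restrict correctly to $\iota_{K}$ on the surviving summand(s). Since all of this is worked out in \cite[\S 22]{HHSZ}, our proof will consist of specializing that general formula to the $[0]$ $\spinc$-structure and reading off the two cases above.
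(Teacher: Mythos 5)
This is a citation (\cite[Proposition 22.9]{HHSZ}) rather than a result the paper proves; the paper simply records the statement without argument. Your proposal takes exactly the same route — invoke the involutive mapping-cone surgery formula and specialize to the self-conjugate $\spinc$-structure, deferring the truncation and equivariance bookkeeping to \cite{HHSZ} — so the approaches agree. One small caution on your heuristics: it is not quite accurate to say the index set ``does not contain $0$'' when $q$ is even, nor that the two surviving $A^-_{[p/2q]}$ summands are ``extremal'' pieces of the truncation; the relevant dichotomy is whether the reflection of the truncated cone has its fixed locus on an $A^-$ vertex (odd $q$, surviving $A^-_0$) or on a $B^-$ vertex flanked by a swapped pair of $A^-$'s (even $q$, surviving $A^-_{[p/2q]}\oplus A^-_{[p/2q]}\to B^-_{[p/2q]}$). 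Since you ultimately appeal to \cite[\S 22]{HHSZ} for the rigorous version, this imprecision does not affect the validity of the citation-based argument.
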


Note that if $q$ is even, up to local equivalence the action of $\iota$ does not in fact depend on $\iota_K$.


\section{Proof of Theorems 1.7 and 1.9}\label{sec:3}

We now turn to the proof of Theorem~\ref{thm:1.9}, which will quickly imply Theorem~\ref{thm:1.7}. 

\subsection{Families of surgeries} \label{sec:3.1}
We begin with a general theorem regarding the linear independence of families of homology spheres obtained by odd-over-even surgeries on knots.

\begin{theorem}\label{thm:3.1}
Let $p$ and $q$ be positive integers with $p$ odd and $q$ even. Let $\{J_n\}_{n \in \N}$ be any family of knots in $S^3$. If $V_0(J_n) \rightarrow \infty$ as $n \rightarrow \infty$, then the family of $\Z_2$-homology spheres $\{\smash{S^3_{p/q}(J_n)}\}_{n \in \N}$ has infinite rank in $\smash{\Theta^{3}_{\Z_2}}$.
\end{theorem}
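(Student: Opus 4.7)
The strategy is to pass to involutive Heegaard Floer theory via the surgery formula of Theorem~\ref{thm:2.13} and establish linear independence of the resulting local equivalence classes in $\Inv$ using a dual form of Theorem~\ref{thm:2.12}. Since the composition $\Sigma_2 \colon \cC \to \Theta^3_{\Z_2}$ followed by $h \colon \Theta^3_{\Z_2} \to \Inv$ is a homomorphism, it suffices to exhibit infinite rank in $\Inv$ for the local equivalence classes of the branched double covers in question.

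Set $m := [p/(2q)]$. As $p$ is odd, $[0]$ is the unique self-conjugate $\spinc$-structure on $S^3_{p/q}(J_n)$, and Theorem~\ref{thm:2.13}(2) identifies $(\CFm(S^3_{p/q}(J_n), [0]), \iota)$ up to local equivalence with the truncated mapping cone
\[
\mathcal{M}_n \;=\; \bigl(\, A^-_m(J_n) \oplus A^-_m(J_n) \longrightarrow B^-_m(J_n) \,\bigr),
\]
in which $\iota$ swaps the two copies of $A^-_m(J_n)$ and fixes $B^-_m(J_n)$. Modulo internal $U$-torsion, the two maps $A^-_m(J_n) \to B^-_m(J_n)$ become multiplication by $U^{V_m(J_n)}$ and $U^{V'(J_n)}$ on the respective $\F[U]$-towers, where $V_m$ is the standard Ozsv\'ath--Szab\'o invariant and $V'$ is its counterpart coming from the second (horizontal) map in the surgery formula. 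Choosing top-of-tower cycles $a_1, a_2 \in A^-_m(J_n)$ and $b \in B^-_m(J_n)$, and setting $n_i := \min(V_m(J_n), V'(J_n))$, the assignment $x^\vee \mapsto a_1$, $\iota x^\vee \mapsto a_2$, $\alpha^\vee \mapsto b$ defines, after a grading shift $\Delta_n$ matching the $d$-invariants, a map
\[
\phi_n \colon X^\vee_{n_i}[\Delta_n] \longrightarrow \mathcal{M}_n.
\]
This is grading-preserving, $\iota$-equivariant by the swap description of $\iota$, and induces an isomorphism on $U^{-1}H_\ast$ since it sends the generator of the $\F[U,U^{-1}]$-tower of $X^\vee_{n_i}$ to the corresponding generator of $\mathcal{M}_n$. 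Thus $X^\vee_{n_i}[\Delta_n] \leq \mathcal{M}_n$ in the sense of Definition~\ref{def:2.11}.

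For the divergence of $n_i$, iterating the standard inequality $V_s(K) \geq V_{s+1}(K) \geq V_s(K) - 1$ yields $V_m(J_n) \geq V_0(J_n) - m$, and an analogous bound controls $V'(J_n)$; since $m$ is fixed and $V_0(J_n) \to \infty$ by hypothesis, we get $n_i \to \infty$. Finally, apply the dual form of Theorem~\ref{thm:2.12}: since $X^\vee_{n_i} \leq C_i$ is equivalent, under the duality involution $C \mapsto C^\vee$ of $\Inv$, to $C_i^\vee \leq X_{n_i}$, applying Theorem~\ref{thm:2.12} to $\{\mathcal{M}_n^\vee\}$ produces infinite rank, which transports to $\{\mathcal{M}_n\}$ under dualization. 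Composing with the homomorphism $h \circ \Sigma_2$ yields the desired infinite rank in $\Theta^3_{\Z_2}$.

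The principal obstacle is the explicit identification of the second (horizontal) map in $\mathcal{M}_n$ and the verification that its height $V'(J_n)$ is also bounded below by a quantity tending to infinity with $V_0(J_n)$, together with careful bookkeeping of the grading shifts $\Delta_n$ inherited from the surgery formula. Neither point is conceptually serious given the explicit description of the truncated mapping cone, but the algebraic details require care.
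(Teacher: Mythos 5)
Your overall strategy matches the paper's: apply Theorem~\ref{thm:2.13}(2), compare the truncated mapping cone to a complex $X_i^\vee$, control $V_s$ via $V_0(J_n) - s \leq V_s(J_n) \leq V_0(J_n)$, dualize, and invoke Theorem~\ref{thm:2.12}. Two points, however, need attention.

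First, and most importantly, the map you define,
\[
x^\vee \mapsto a_1, \quad \iota x^\vee \mapsto a_2, \quad \alpha^\vee \mapsto b,
\]
is \emph{not} a chain map. In the mapping cone $\mathcal{M}_n$, the differential of $a_1$ is $v(a_1)$, which is only \emph{homologous} to $U^{V_s(J_n)}b$, not equal to it, whereas in $X^\vee_{n_i}$ one has $\partial x^\vee = U^{n_i}\alpha^\vee$, so you need $\partial\phi_n(x^\vee) = U^{n_i}b$ on the nose. The paper fixes this by choosing $c\in B_s^-$ with $\partial c = v(a) + U^{V_s(J_n)}b$ and sending $x^\vee \mapsto a_1 + c$, $\iota x^\vee \mapsto a_2 + c$. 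Then $\partial(a_1 + c) = v(a) + v(a) + U^{V_s(J_n)}b = U^{V_s(J_n)}b$ as required, while $\iota(a_1+c) = a_2 + c$ preserves $\iota$-equivariance. You should include this correction term.

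Second, your worry about a separate "horizontal" map with a potentially different height $V'(J_n)$ is unfounded given the structure asserted in Theorem~\ref{thm:2.13}(2). By Definition~\ref{def:2.8}, $\iota$ is a genuine chain map; since $\iota$ interchanges the two copies of $A_s^-$ and fixes $B_s^-$, commuting with the differential forces the two arrows $A_s^- \to B_s^-$ to be the same map. Thus there is only one height $V_s(J_n)$, and the $\min(V_m, V')$ in your argument collapses to $V_s(J_n)$, just as in the paper. Once you incorporate the correction term $c$, the rest of your argument (duality, the bound $V_s \geq V_0 - s$ with $s$ fixed, and Theorem~\ref{thm:2.12}) goes through exactly as in the paper.
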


\begin{proof}
For convenience, write $s = [p/(2q)]$. Denote $A_s^- = A_s^-(J_n)$ and $B_s^- = B_s^-(J_n)$, so that the $\iota$-complex $C_n$ of $\smash{S^{3}_{p/q}(J_n)}$ is locally equivalent to the complex $A_s^- \oplus A_s^- \rightarrow B_s^-$ of Theorem~\ref{thm:2.13}. The structure of knot Floer homology implies we have a relatively graded isomorphism
\[
H_*(A_s^-) \cong \F[U] \oplus (U\text{-torsion}) \quad \text{and} \quad H_*(B_s^-) \cong \F[U].
\] 
The map $v$ induces an injection from the $\F[U]$-tower of $H_*(A_s^-)$ to $H_*(B_s^-) \cong \F[U]$ which is modeled on multiplication by some power of $U$.\footnote{The decomposition $H_*(A_s^-) \cong \F[U] \oplus (U\text{-torsion})$ is of course not canonical, but this statement holds for any choice of decomposition.} This power of $U$ is precisely the knot Floer concordance invariant $V_s(J_n)$ defined in \cite{NiWu}. 

Although in general it is difficult to completely understand $C_n$, we will show that the above discussion suffices to produce an inequality
\[
X_{V_s(J_n)}^\vee[-d(C_n)] \leq C_n. 
\]
We then dualize and apply Theorem~\ref{thm:2.12} This will give the linear independence of the $C_n^\vee$, and hence the $C_n$.

Let $a \in A_s^-$ be a cycle generating the $\F[U]$-tower in $H_*(A_s^-)$ and let $b \in B_s^-$ be a cycle generating $H_*(B_s^-) \cong \F[U]$. Write $a_1$ for the copy of $a$ in the first summand of $A_s^- \oplus A_s^-$ and $a_2$ for the copy of $a$ in the second. Note that $v(a)$ is homologous to $\smash{U^{V_s(J_n)} b}$; let $c \in B_s^-$ be such that $\smash{\partial c = v(a) + U^{V_s(J_n)} b}$. 

Define a (grading-homogenous) map from $\smash{X_{V_s(J_n)}^\vee}$ to $C_n$ by sending
\[
x^\vee \mapsto a_1 + c, \quad \iota x^\vee \mapsto a_2 + c, \quad \text{and} \quad \alpha^\vee \mapsto b
\]
This is an $\iota$-equivariant chain map; the situation is schematically depicted in Figure~\ref{fig:3.1}. The $d$-invariant of $C_n$ is given by the grading of $a_1 + a_2$. Applying a grading shift to $\smash{X_{V_s(J_n)}^\vee}$ so that $\smash{x^\vee}$ and $\smash{\iota x^\vee}$ have this grading then gives the claimed inequality.

\begin{figure}[h!]
\center
\includegraphics[scale=1]{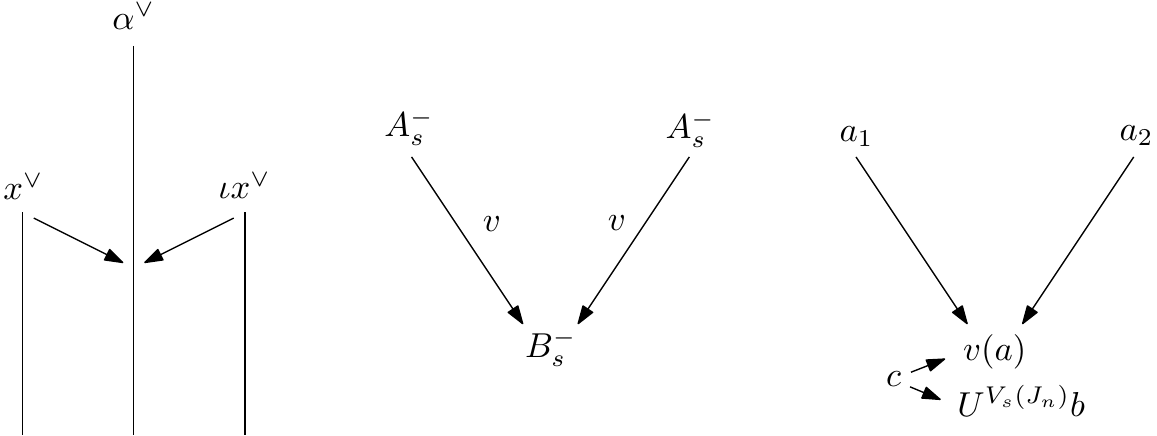}
\caption{Left: the complex $\smash{X_{V_s(J_n)}^\vee}$. Middle: the complex $A_s^- \oplus A_s^- \rightarrow B_s^-$ afforded by Theorem~\ref{thm:2.13}. Right: a subcomplex of $A_s^- \oplus A_s^- \rightarrow B_s^-$ isomorphic to $\smash{X_{V_s(J_n)}^\vee}$.}\label{fig:3.1}
\end{figure}

Dualizing, we obtain an inequality from $C_n^\vee$ to some grading shift of $\smash{X_{V_s(J_n)}}$. By \cite{Rasmussen_lens, NiWu}, we have that
\[
V_0(J_n) - s \leq V_s(J_n) \leq V_0(J_n).
\]
Since $s$ is independent of $n$, the condition $V_0(J_n) \rightarrow \infty$ implies that $V_s(J_n) \rightarrow \infty$. Theorem~\ref{thm:2.12} then shows that the span of the $C_n^\vee$, and hence the span of the $C_n$, has infinite rank in $\Inv$. 
\end{proof}

In fact, an examination of the proof of Theorem~\ref{thm:3.1} establishes a slightly stronger claim: we may even allow the surgery coefficient $p/q$ to vary, so long as $s = [p/(2q)]$ remains bounded. (This extension will not be used in the present paper.) Theorem~\ref{thm:3.1} immediately gives a general result regarding rational unknotting number one patterns. For completeness, we record this here:

\begin{theorem}\label{thm:3.2}
Let $P$ be a positive proper rational unknotting number one pattern with associated knot $J$. For any family of knots $\{K_n\}_{n \in \N}$ in $S^{3}$, let $\smash{J_n = J_{K_n, \tilde{\mu}}}$ be the corresponding doubly-infected family. If $V_{0}(J_n) \rightarrow \infty$ as $n \rightarrow \infty$, then $\{P(K_{n})\}_{n \in \N}$ has infinite rank.
\end{theorem}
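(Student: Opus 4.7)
The plan is to assemble Theorem~\ref{thm:3.2} directly from Theorem~\ref{thm:3.1} together with the branched double cover machinery established in Section~\ref{sec:2}. Concretely, I would first invoke Section~\ref{sec:2.3} to identify $\Sigma_2(P(K_n)) \cong S^3_{p/q}(J_n)$, where $p/q$ is the coefficient of $P$ from Definition~\ref{def:2.3} (and is the same for every $n$, depending only on $P$). Since $P$ is positive by hypothesis, $p/q > 0$; and since $P(K_n)$ is a knot, $p$ is odd. Furthermore, because $P$ is \emph{proper}, Lemma~\ref{lem:2.4} guarantees that $q$ is even. Thus the positive, proper, $\Z_2$-homology sphere hypotheses of Theorem~\ref{thm:3.1} are all satisfied.

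Next, I would apply Theorem~\ref{thm:3.1} to the family $\{J_n\}_{n \in \N}$: the assumption $V_0(J_n) \to \infty$ is precisely the hypothesis of that theorem, so we conclude that $\{S^3_{p/q}(J_n)\}_{n \in \N} = \{\Sigma_2(P(K_n))\}_{n \in \N}$ has infinite rank in $\Theta^3_{\Z_2}$.

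Finally, to transfer this linear independence from $\Theta^3_{\Z_2}$ back to the concordance group, I would use the fact that the branched double cover map
\[
\Sigma_2 \colon \mathcal{C} \longrightarrow \Theta^3_{\Z_2}
\]
is a homomorphism (as noted in the overview in Section~\ref{sec:1.2}). Any finite linear relation $\sum_i a_i [P(K_{n_i})] = 0$ in $\mathcal{C}$ would map under $\Sigma_2$ to a finite linear relation among the $[\Sigma_2(P(K_{n_i}))]$ in $\Theta^3_{\Z_2}$, contradicting the infinite rank just established. Hence $\{P(K_n)\}_{n \in \N}$ has infinite rank in $\mathcal{C}$.

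The only genuine content here is Theorem~\ref{thm:3.1}; once that is in hand, the present statement is essentially a bookkeeping corollary, so there is no substantial obstacle to carrying out the steps above. The one thing worth double-checking is that the surgery coefficient $p/q$ (and in particular the parity of $q$) is intrinsic to $P$ and independent of $n$, which is clear from the construction in Section~\ref{sec:2.3}, where $p/q$ is read off from $P$ alone and the infection only modifies the knot being surgered, not the surgery slope.
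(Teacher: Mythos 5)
Your proposal is correct and follows essentially the same argument as the paper's own proof: pass to branched double covers via Section~\ref{sec:2.3}, verify that $p$ is odd and (by properness via Lemma~\ref{lem:2.4}) that $q$ is even, and then apply Theorem~\ref{thm:3.1}, transferring the infinite rank back to $\mathcal{C}$ through the homomorphism $\Sigma_2$. You have merely spelled out the bookkeeping that the paper compresses into a short paragraph.
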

\begin{proof}
As explained in Section~\ref{sec:2}, by passing to branched double covers and applying the discussion of Section~\ref{sec:2.3}, it suffices to prove that $\{\smash{S^{3}_{p/q}(J_n)}\}_{n \in \N}$ has infinite rank in $\smash{\Theta^{3}_{\Z_2}}$. As explained in Section~\ref{sec:2.2}, the fact that $P$ is proper means that $q$ is even, while $p$ is always odd. The claim then follows verbatim from Theorem~\ref{thm:3.1}.
\end{proof}

Theorem~\ref{thm:3.2} is trivially a specialization of Theorem~\ref{thm:3.1}: so far, we have not imposed any condition on $\ell = \lk(J, \tmu)$, nor have we used any details of the definition of a rational tangle pattern other than the fact that their branched double covers are surgeries on knots. However, while in principle Theorem~\ref{thm:3.2} is entirely general, in practice it may be difficult to check the condition $V_{0}(J_n) \rightarrow \infty$, since the knots $J_n$ are extremely complicated.

Our approach will thus be to estimate $V_0(J_n)$ in terms of the invariants of the companion knots $K_n$. We show that if $\ell \neq 0$, then we can bound $V_0(J_n)$ below in terms of $V_0(K_n) - V_0(-K_n)$, which will establish Theorem~\ref{thm:1.9}. The desired inequality will follow from the construction of a certain negative-definite cobordism whose incoming end is positive surgery on $J_n$. The outgoing end of our cobordism will be the connected sum of three pieces: positive surgery on $K_n$, negative surgery on $K_n$, and a third fixed manifold which is independent of $K_n$. A similar cobordism was considered in \cite{HPC}. We provide an elementary discussion of this technique in the next subsection.

\subsection{Construction of the cobordism}\label{sec:3.2}
Fix any non-zero integer $M$. In Figure~\ref{fig:3.2}, we have displayed an alternative surgery diagram for $M$-surgery on $\smash{J_{K, \tmu}}$. This consists of a copy of $J$ with surgery coefficient $M$, together with $K$, $\tau K$, $\mu$, and $\tau \mu$, all with surgery coefficient zero. For convenience, we denote these by $K_1$, $K_2$, $\mu_1$, and $\mu_2$, respectively. For ease of bookkeeping, we give $K_2$ and $\mu_2$ the reversed orientation as compared to their pushforward orientations under $\tau$. (Note that in the discussion of Section~\ref{sec:2.2}, both of these are given the pushforward orientation. Hence we may simultaneously reverse orientation on both at no cost.) Then
\[
\lk(\mu_i, K_i) = 1 \quad \text{and} \quad \lk(\mu_i, J) = \ell.
\]
for $i = 1, 2$.

To see that the diagram of Figure~\ref{fig:3.2} is correct, slide the strands of $J$ which pass though $\mu_i$ over $K_i$, via $J \mapsto J - \ell K_1 - \ell K_2$. This changes $J$ into $J_{K, \tmu}$ (with surgery coefficient $M$) and unlinks $J$ from each $\mu_i$. We then use $\mu_i$ to separate $K_i$ from the rest of the diagram and delete both pairs $\mu_i$ and $K_i$.

\begin{figure}[h!]
\center
\includegraphics[scale=0.8]{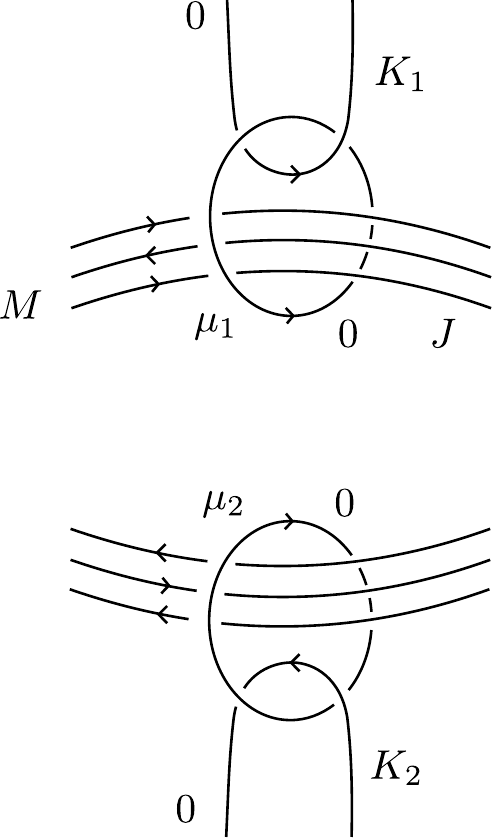}
\caption{A surgery diagram for $M$-surgery on $J_{K, \tmu}$. In this example, $\ell = 1$.}\label{fig:3.2}
\end{figure}

Now fix any pair of integers $N_1$ and $N_2$. Construct a cobordism $W$ from $M$-surgery on $J_{K, \tmu}$ by attaching a pair of $2$-handles along the curves $\gamma_1$ and $\gamma_2$ indicated on the left in Figure~\ref{fig:3.3}. These have framings $-N_1$ and $-N_2$, respectively. For concreteness, we orient $\gamma_1$ and $\gamma_2$ such that the non-zero linking numbers are given by
\[
\lk(\gamma_i, \mu_i) = 1 \quad \text{and} \quad \lk(\gamma_i, K_i) = -N_i.
\]
The outgoing end of this cobordism is homeomorphic to the connected sum
\[
S^3_{N_1}(K) \# Y(J, \mu, M) \# S^3_{N_2}(K),
\]
where $Y(J, \mu, M)$ is a 3-manifold which depends only on $J$, $\mu$, and $M$. (In particular, it does not depend on $K$.) To see this, slide $K_i$ over $\gamma_i$, as in Figure~\ref{fig:3.3}.

\begin{figure}[h!]
\center
\includegraphics[scale=0.8]{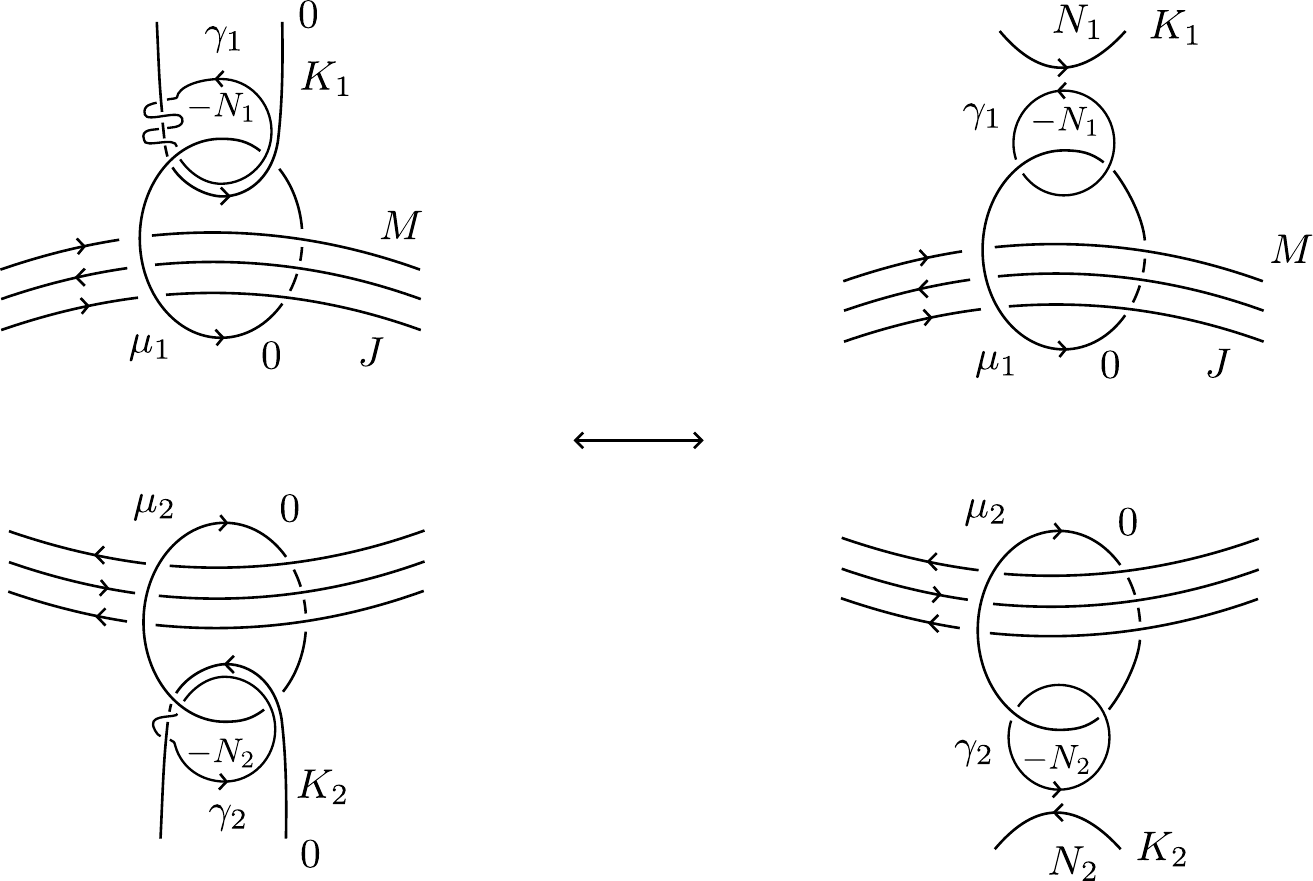}
\caption{Left: constructing a cobordism by attaching a pair of 2-handles along the curves $\gamma_1$ and $\gamma_2$. Right: the outgoing end of this cobordism. The equivalence between the left- and right-hand pictures is most easily envisioned by going from right-to-left, in which case the map is given by sliding $K_i$ over $\gamma_i$ via $K_i \mapsto K_i + \gamma_i$. The map from left-to-right corresponds to the slide $K_i \mapsto K_i - \gamma_i$. The 3-manifold $Y(J, \mu, M)$ is surgery on the sublink on the right consisting of $J$, $\mu_i$, and $\gamma_i$. In this example, $\ell = 1$, $N_1 = 2$, and $N_2 = -1$.}\label{fig:3.3}
\end{figure}

We now investigate under what conditions this cobordism is negative definite:

\begin{lemma}\label{lem:3.3}
The cobordism of Figure~\ref{fig:3.3} is negative definite if and only if 
\[
\ell^2(N_1^2 + N_2^2) - (N_1 + N_2)M  > 0 \quad \text{and} \quad MN_1N_2(- \ell^2(N_1 + N_2) + M) > 0.
\]
\end{lemma}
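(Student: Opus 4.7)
The plan is to compute the intersection form on $H_2(W;\Q)$ as an explicit $2 \times 2$ matrix and then apply the classical trace--determinant criterion for negative definiteness. Since $M \neq 0$, the incoming end $Y_- = S^3_M(J_{K,\tmu})$ is a rational homology sphere, and the long exact sequence of the pair $(W, Y_-)$ combined with $H_2(W, Y_-) \cong \Z^2$ (one generator per attached $2$-handle $\gamma_1, \gamma_2$) gives $H_2(W;\Q) \cong \Q^2$.

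To compute the intersection form, I would close $W$ up by gluing it to the $4$-manifold $X_\mathrm{inc}$ obtained from $B^4$ by attaching $2$-handles along the framed link $L_\mathrm{inc} = J \cup K_1 \cup K_2 \cup \mu_1 \cup \mu_2$ with framings $M,0,0,0,0$, so that $\partial X_\mathrm{inc} = Y_-$. Then $V = X_\mathrm{inc} \cup_{Y_-} W$ is obtained from $B^4$ by attaching $2$-handles along the full framed link of Figure~\ref{fig:3.3} (left), and its intersection form is the $7 \times 7$ linking matrix
\[
A_\mathrm{total} = \begin{pmatrix} A_\mathrm{inc} & B^T \\ B & D \end{pmatrix},
\]
where $D = \mathrm{diag}(-N_1, -N_2)$ and $B$ records the linkings of $\gamma_1, \gamma_2$ with the components of $L_\mathrm{inc}$. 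Because $Y_-$ is a rational homology sphere, Mayer--Vietoris yields an orthogonal splitting $H_2(V;\Q) \cong H_2(X_\mathrm{inc};\Q) \oplus H_2(W;\Q)$, so the intersection form of $W$ on $H_2(W;\Q)$ is the Schur complement $S = D - B A_\mathrm{inc}^{-1} B^T$.

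Using the linking data from Section~\ref{sec:3.2}, namely $\lk(J,\mu_i) = \ell$, $\lk(K_i,\mu_i) = 1$, $\lk(\gamma_i,\mu_i) = 1$, $\lk(\gamma_i, K_i) = -N_i$, and all other pairs zero, a direct row reduction shows $\det A_\mathrm{inc} = M$ and yields an explicit formula for $A_\mathrm{inc}^{-1}$. Substituting and simplifying gives
\[
S = \frac{1}{M}\begin{pmatrix} N_1(M - N_1 \ell^2) & -N_1 N_2 \ell^2 \\ -N_1 N_2 \ell^2 & N_2(M - N_2 \ell^2) \end{pmatrix}.
\]
A $2 \times 2$ symmetric matrix is negative definite if and only if its trace is negative and its determinant is positive; computing these,
\[
\mathrm{tr}(S) = \frac{M(N_1+N_2) - \ell^2(N_1^2+N_2^2)}{M}, \qquad \det(S) = \frac{N_1 N_2\,(M - \ell^2(N_1+N_2))}{M},
\]
and clearing denominators (with the sign of $M$ tracked carefully, under the positive-surgery convention $M > 0$ inherited from the intended applications) recovers the two inequalities in the lemma.

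The only real technical issue is the bookkeeping of orientations and signs throughout the Schur complement calculation, in particular because $\mu_2$ and $K_2$ are oriented opposite to their $\tau$-pushforwards as specified in Section~\ref{sec:3.2}. No deep topology is needed beyond the orthogonal splitting afforded by the rational homology sphere gluing.
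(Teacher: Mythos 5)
Your proof is correct and takes essentially the same route as the paper's: the paper carries out the Schur complement computation by hand, constructing explicit homology classes $C_i = M\gamma_i + (\text{correction terms})$ orthogonal to $H_2(X_{\mathrm{inc}};\Q)$ whose Gram matrix is $M^2$ times your Schur complement $S$, and then applies the same trace--determinant criterion. Your remark about the implicit convention $M>0$ is apt; the paper does assume this throughout (it is stated just after the lemma that $M$ is taken to be a large positive integer), and the ``if and only if'' as written does depend on that sign.
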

\begin{proof}
Since $M \neq 0$, the incoming end of the cobordism is a rational homology sphere with first homology $\Z/M\Z$. Over $\Q$, the second homology of the cobordism is thus clearly of rank two and is generated by the cores of the 2-handles attached along $\gamma_1$ and $\gamma_2$. However, in order to calculate the self-intersections of these generators, we must perform handleslides on the $\gamma_i$ to algebraically unlink them from the rest of the diagram. In addition, the $\gamma_i$ are not null-homologous in general; instead, we are only guaranteed that they are $M$-torsion in first homology.

We thus instead consider the curves
\begin{align*}
&C_1 = M\gamma_1 + (N_1 \ell^2 - M)K_1 + MN_1 \mu_1 - N_1 \ell J + N_1 \ell^2 K_2 \\
&C_2 = M\gamma_2 + (N_2 \ell^2 - M)K_2 + MN_2 \mu_2 - N_2 \ell J + N_2 \ell^2 K_1.
\end{align*}
This are obtained by sliding $M\gamma_1$ and $M\gamma_2$ over the other curves in the diagram to make them algebraically unlinked from the left-hand side of Figure~\ref{fig:3.3}. Indeed, the reader should verify that the linking numbers between $C_1$ and the five curves $K_1, \mu_1, J, \mu_2$, and $K_2$ are zero, and similarly for $C_2$. For convenience, we recall that the non-zero linking numbers are given by
\[
\lk(\mu_i, K_i) = \lk(\mu_i, \gamma_i) = 1, \quad \lk(\mu_i, J) = \ell, \quad \text{and} \quad \lk(\gamma_i, K_i) = -N_i,
\]
and
\[
\lk(\gamma_i, \gamma_i) = - N_i \quad \text{and} \quad \quad \lk(J, J) = M.
\]
The self-linking of $C_1$ is given by
\begin{align*}
lk(C_1, C_1) &= M lk(C_1, \gamma_1) \\
&= M\left( M(-N_1) + (N_1 \ell^2 - M)(-N_1) + MN_1 \right) \\
&= MN_1(M - N_1 \ell^2).
\end{align*}
Similarly, $\lk(C_2, C_2) = MN_2(M - N_2 \ell^2)$. Meanwhile, 
\[
\lk(C_1, C_2) = M \lk(C_1, \gamma_2) = - M N_1 N_2 \ell^2.
\]
Thus, the intersection form of the cobordism is proportional to
\[
\left(\begin{array}{cc}N_1(M - N_1 \ell^2) & -N_1N_2\ell^2 \\-N_1N_2\ell^2 & N_2(M - N_2 \ell^2)\end{array}\right).
\]
This has characteristic polynomial
\[
t^2 + \left( \ell^2(N_1^2 + N_2^2) - (N_1 + N_2)M \right) t + \left( MN_1N_2(- \ell^2(N_1 + N_2) + M) \right).
\]
The roots of the characteristic polynomial are simultaneously negative if and only if their sum is negative and their product is positive; that is,
\[
\ell^2(N_1^2 + N_2^2) - (N_1 + N_2)M  > 0 \quad \text{and} \quad MN_1N_2(- \ell^2(N_1 + N_2) + M) > 0,
\]
as desired.
\end{proof}

In our situation, we will be interested in large positive surgery on $J_{K, \tmu}$, since $V_0(J_{K, \tmu})$ is (up to an overall shift) given by the $d$-invariant of such a manifold. We thus assume that $M$ is positive. Suppose in addition that $\ell \neq 0$. Then we have:

\begin{lemma}\label{lem:3.4}
Let $M > 0$ and $\ell \neq 0$. Then for any $N_1 \gg 0$ and $N_2 < 0$ with $N_2$ small in magnitude compared to $N_1$, the cobordism $W$ is negative definite.
\end{lemma}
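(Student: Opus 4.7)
The plan is to verify the two inequalities furnished by Lemma~\ref{lem:3.3} under the hypotheses $M>0$, $\ell\neq 0$, $N_1\gg 0$, and $N_2<0$ with $|N_2|$ small relative to $N_1$. Both inequalities are polynomial in $N_1, N_2$, so the analysis reduces to identifying the dominant terms and checking signs.

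First I would dispatch the inequality
\[
\ell^2(N_1^2 + N_2^2) - (N_1 + N_2)M > 0.
\]
Since $\ell\neq 0$ and $|N_2|$ is bounded relative to $N_1$, the $\ell^2 N_1^2$ term is of quadratic order in $N_1$ while $(N_1+N_2)M$ is linear. Thus for $N_1$ sufficiently large (depending on $M$, $\ell$, and the allowed range of $N_2$), this inequality holds automatically.

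Next I would analyze
\[
MN_1 N_2(-\ell^2(N_1+N_2) + M) > 0.
\]
With $M>0$, $N_1>0$, and $N_2<0$, the prefactor $MN_1N_2$ is negative, so the inequality is equivalent to $-\ell^2(N_1+N_2) + M < 0$, i.e.\ $\ell^2(N_1+N_2) > M$. Because $|N_2|$ is small compared to $N_1$, we have $N_1+N_2 > 0$ and in fact $N_1+N_2 \to \infty$ as $N_1\to\infty$; hence $\ell^2(N_1+N_2)$ eventually exceeds $M$. Together these two estimates show that both conditions of Lemma~\ref{lem:3.3} are satisfied, which is exactly the criterion for $W$ to be negative definite.

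The argument is essentially a limit computation in two variables, and there is no real obstacle: the only subtlety is making the quantifier ``$N_2$ small in magnitude compared to $N_1$'' precise. I would phrase it as: choose any $\varepsilon>0$ and restrict to $|N_2|\leq \varepsilon N_1$; then both thresholds above take the form $N_1 \geq N_1(M,\ell,\varepsilon)$ for an explicit bound, so the lemma holds uniformly over any such regime.
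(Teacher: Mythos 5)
Your proof is correct and follows essentially the same route as the paper: both verify the two conditions of Lemma~\ref{lem:3.3} by observing that the first is dominated by the quadratic term $\ell^2 N_1^2$ for $N_1$ large, and the second reduces (after noting the sign of $MN_1N_2$) to $-\ell^2(N_1+N_2)+M<0$, which holds once $N_1+N_2$ is large enough. The only cosmetic difference is that the paper first divides both conditions through by $\ell^2$, but the analysis is the same.
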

\begin{proof}
Under the hypotheses of the lemma, the conditions of Lemma~\ref{lem:3.3} are equivalent to
\[
N_1^2 + N_2^2 - \left(N_1 + N_2\right) \dfrac{M}{\ell^2} > 0 \quad \text{and} \quad N_1N_2\left( -(N_1 + N_2) + \dfrac{M}{\ell^2} \right) > 0.
\]
The first condition is clearly satisfied as long as at least one of $N_1$ and $N_2$ is sufficiently large in magnitude. In addition, if $N_1$ is sufficiently positive compared to the magnitude of $N_2$, then the factor $-(N_1 + N_2) + M/\ell^2$ in the second condition is negative. Since $N_1 > 0$ and $N_2 < 0$, this gives the claim.
\end{proof}

\begin{remark}\label{rem:3.5}
Let $P$ be any pattern with winding number zero. The meridian $\mu$ of $P$ lies on the boundary of the solid torus $S^1 \times D^2$ for $P$ and thus inherits a normal framing as a curve on $\partial(S^1 \times D^2)$. Lifting this normal framing to the branched double cover defines a pushoff of $\tmu$, which we denote by $\tmu'$. In \cite{HPC}, the authors consider the rational linking number $\lk(\tmu, \tmu')$ and impose the condition $\lk(\tmu, \tmu') \neq 0$ as a hypothesis of \cite[Theorem 3]{HPC}. In our context, $\tmu'$ may be obtained by taking the Seifert framing of $\tmu$ before surgering along $J$ in Figure~\ref{fig:3.3}; the quantity $\lk(\tmu, \tmu')$ is the rational linking number of $\tmu$ and $\tmu'$ in the surgered manifold. It is easily checked that this is non-zero if and only if $\ell = \lk(\tmu, J)$ is non-zero. Thus the linking number requirement we impose in this paper is the same as that of \cite{HPC}; moreover, this characterization shows that the condition $\ell \neq 0$ depends only on $P$ (and not the choice of unknotting tangle replacement).
\end{remark}

\subsection{Completion of proof} We now finally conclude the proof of Theorem~\ref{thm:1.9}. We recall the statement for the convenience of the reader: \\
\\
\noindent
\textbf{Theorem 1.9.} \textit{Let $P$ be a proper rational unknotting number one pattern with non-zero linking number and $p/q > 0$. If $\{K_n\}_{n \in \N}$ is any family of knots such that $V_0(K_n) - V_0(-K_n) \rightarrow \infty$ as $n \rightarrow \infty$, then $\{P(K_{n})\}_{n \in \N}$ has infinite rank.}

\begin{proof}
Fix any positive integer $M$ and integers $N_1$ and $N_2$ satisfying the conditions of Lemma~\ref{lem:3.4}. It will be useful to assume that $M$, $N_1$, and $N_2$ are odd. We obtain a negative-definite cobordism $W$ from
\[
S^3_M(J_n) = S^3_M(J_{K_n, \tmu})
\]
to
\[
S^3_{N_1}(K_n) \# Y \# S^3_{N_2}(K_n),
\]
where $Y = Y(J, \mu, M)$ is independent of $K_n$. We claim that there exists a $\spinc$-structure $\s$ on $W$ which restricts to $[0]$ on $S^3_M(J_n)$ and $[0]$ on both factors $S^3_{N_1}(K_n)$ and $S^3_{N_2}(K_n)$. This can be shown in many ways. For example, note that since $M$, $N_1$, and $N_2$ are odd, the $\spinc$-structures on $S^3_M(J_n)$, $S^3_{N_1}(K_n)$, and $S^3_{N_2}(K_n)$ are parameterized by the Chern classes of their determinant line bundles. Let $\mathfrak{t}$ be any $\spinc$-structure on $W$ with determinant line bundle $L$. Let
\[
2E + 1= MN_1N_2.
\]
Then the tensor product $\s = \mathfrak{t} \otimes L^E$ is a $\spinc$-structure on $W$ with first Chern class $(2E + 1) c_1(L)$; this trivially vanishes when restricted to $S^3_M(J_n)$, $S^3_{N_1}(K)$, and $S^3_{N_2}(K)$. We thus obtain an equality of $d$-invariants:
\begin{equation}\label{eq:eq1}
d(S^3_M(J_n), [0]) + \Delta(W, \s) \leq d(S^3_{N_1}(K_n), [0]) + d(Y, \s|_Y) + d(S^3_{N_2}(K_n), [0]).
\end{equation}
Here, $\Delta(W, \s)$ is the Heegaard Floer grading shift associated to $W$ and $\s$. Crucially, note that $\Delta(W, \s)$ and $d(Y, \s|_Y)$ do not depend on the index $n$.

Now, since $M$ and $N_1$ are positive, we have the standard equality
\[
d(S_M(J_n), [0]) = \dfrac{M-1}{4} - 2V_0(J_n) \quad \text{and} \quad d(S_{N_1}(K), [0]) = \dfrac{N_1-1}{4} - 2V_0(K_n).
\]
Since $N_2$ is negative, we have
\[
d(S_{N_2}(K_n), [0]) = - d(S_{-N_2}(-K_n), [0]) = \dfrac{N_2 + 1}{4} + 2V_0(-K_n).
\]
Substituting these into our inequality (\ref{eq:eq1}) for $d$-invariants and collecting terms, we obtain
\[
V_0(J_n) \geq V_0(K_n) - V_0(-K_n) + C
\]
where $C$ is a constant not depending on $n$. Hence we see that the condition $V_0(K_n) - V_0(-K_n) \rightarrow \infty$ in fact guarantees $V_0(J_n) \rightarrow \infty$. Applying Theorem~\ref{thm:3.2} then gives the result.
\end{proof}

\begin{remark}\label{rem:3.6}
The reader may wonder whether the condition $\ell \neq 0$ is necessary. This is crucial for the argument: note that if $\ell = 0$, then the conditions of Lemma~\ref{lem:3.3} become
\[
-(N_1 + N_2)M > 0 \quad \text{and} \quad M^2N_1N_2 > 0.
\]
If $M > 0$, these conditions are only satisfied when $N_1$ and $N_2$ are both less than zero. In this case, the resulting inequality bounds $V_0(J_n)$ below by a constant plus $-2V_0(-K_n)$, which is not generally useful (as $V_0$ is positive). Similarly, the reader may wonder whether more judicious choices for $N_1$ and $N_2$ might produce different inequalities. For example, if we could choose $N_1$ and $N_2$ to both be positive, we would bound $V_0(J_n)$ below by a constant plus $2 V_0(K_n)$. Unfortunately, this is also impossible: if $N_1$ and $N_2$ are positive, the conditions of Lemma~\ref{lem:3.3} become
\[
N_1^2 + N_2^2 - \left(N_1 + N_2\right) \dfrac{M}{\ell^2} > 0 \quad \text{and} \quad -(N_1 + N_2) + \dfrac{M}{\ell^2} > 0.
\]
It is straightforward to verify that this is impossible.
\end{remark}

This immediately completes the proof of Theorem~\ref{thm:1.7}: \\
\\
\noindent
\textbf{Theorem 1.7.} \textit{Let $P$ be a proper rational unknotting number one pattern with non-zero linking number. Then $P$ is rank-expanding. More specifically, if $K$ is any knot such that $V_0(nK) - V_0(-nK) \rightarrow \infty$ as $n \rightarrow \infty$, then $P$ is rank-expanding along $K$.}

\begin{proof}
Let $P$ be a proper rational unknotting number one pattern with $\ell \neq 0$. If $P$ is positive, then setting $K_n = nK$ and applying Theorem~\ref{thm:1.9} immediately gives the claim. Otherwise, consider the (mirrored, orientation-reversed) pattern $-P$. This is also a proper rational unknotting number one pattern with $\ell \neq 0$; considering the branched double cover shows that $-P$ is positive. Again setting $K_n = nK$, Theorem~\ref{thm:1.9} implies $(-P)(nK)$ (for $n > 0$) has infinite rank. But this means $P(-nK)$ (for $n > 0$) has infinite rank. Hence $P$ is certainly rank-expanding along $\{nK\}_{n \in \Z}$.
\end{proof}

\section{Proof of Theorem 1.10}\label{sec:4}
We now turn to the proof of Theorem~\ref{thm:1.10}. \\
\\
\noindent
\textbf{Theorem 1.10.} 
\textit{
Let $P$ be a $p/q$-rational tangle pattern with $p/q > 0$.
\begin{enumerate}
\item Suppose $q$ is even. Let $\{K_n\}_{n \in \N}$ be any family of thin knots with $\tau(K_n)$ distinct and greater than $\lfloor(\lfloor p/q \rfloor + 1)/4 \rfloor$. Then $\{P(K_{n})\}_{n \in \N}$ is linearly independent and in fact spans a $\Z^\infty$-summand of $\cC$.
\item Suppose $q$ is odd.  Let $\{K_n\}_{n \in \N}$ be any family of thin knots with $\tau(K_n)$ distinct and less than zero. Then $\{P(K_{n})\}_{n \in \N}$ is linearly independent and in fact spans a $\Z^\infty$-summand of $\cC$.
\end{enumerate}
}

\begin{proof}
Let $P$ be a $p/q$-rational tangle pattern with $p/q > 0$. We start by showing that the surgered family $\{\smash{S^{3}_{p/q}(K_{n} \# K_{n})}\}_{n \in \N}$ is linearly independent in $\smash{\Theta^{3}_{\Z_2}}$.

We first consider the case when $q$ is even. Following the proof of Theorem~\ref{thm:1.7}, we again analyze the surgery complex afforded by Theorem~\ref{thm:2.13} and compare it to some $X_i^\vee$. However, because the knot Floer homology of a thin knot is very simple, in this case we will be able to establish an explicit local equivalence, rather than just an inequality. This will allow us to utilize Theorem~\ref{thm:2.10} rather than Theorem~\ref{thm:2.12}, and thus conclude linear independence.

As before, write $s = [p/2q]$ and denote $A_s^- = A_s^-(K_n \# K_n)$ and $B_s^- = B_s^-(K_n \# K_n)$, so that the $\iota$-complex of $\smash{S^{3}_{p/q}(K_{n} \# K_{n})}$ is locally equivalent to the complex $A_s^- \oplus A_s^- \rightarrow B_s^-$ defined in Theorem~\ref{thm:2.13}. 

It is a standard fact that if $K_n$ is thin, then the connected sum $K_n \# K_n$ is also thin. Hence the knot Floer complex of $K_n \# K_n$ consists of a step-length-one staircase, together with a number of side-length-one boxes, as schematically displayed on the left in Figure~\ref{fig:4.1}. The staircase has total height $2 \tau(K_n)$. The fact that $\tau(K_n) > 0$ (together with the fact that $\tau(K_n \# K_n) = 2\tau(K_n)$ is even) shows that the staircase opens towards the south-west, as in Figure~\ref{fig:4.1}.

\begin{figure}[h!]
\center
\includegraphics[scale=1]{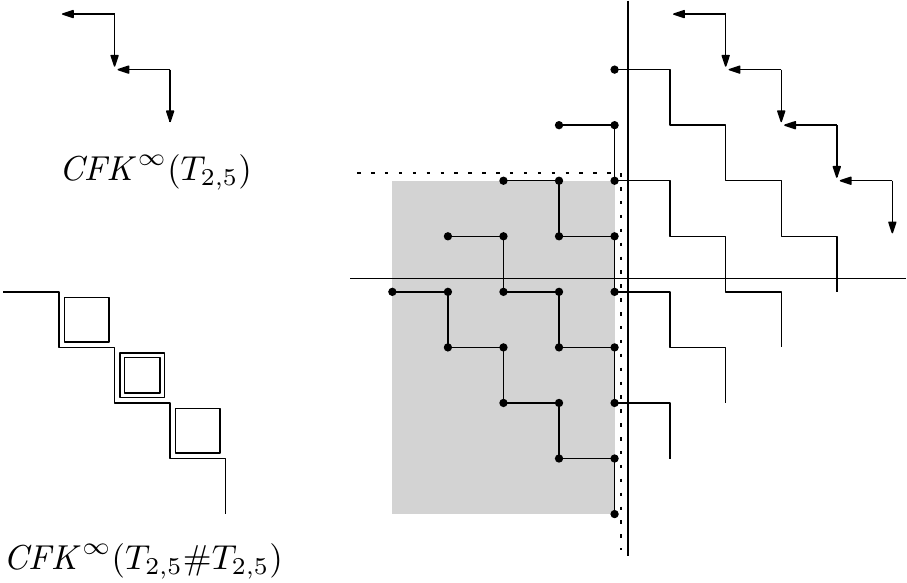}
\caption{Top left: the knot Floer complex of the thin knot $T_{2, 5}$; $\tau(T_{2,5}) = 2$. Bottom left: the knot Floer complex of $T_{2,5} \# T_{2,5}$ (after a change of basis). Right: the subcomplex of $\CFK^\infty(T_{2,5} \# T_{2,5})$ spanned by the staircase generators. We have schematically depicted $(A_s^-)'$ and $(B_s^-)'$ for $s = 2$. The generators in $(B_s^-)'$ are drawn as dots (i.e., to the left of the vertical axis); the generators in $(A_s^-)'$ are the subset of these dots lying in the shaded region.}\label{fig:4.1}
\end{figure}

We argue that up to local equivalence, we can successively simplify $\CFK^\infty(K_n \# K_n)$ until it is the same as some $\smash{X_i^\vee}$. The first simplification is as follows. Consider the subcomplex of $\CFK^\infty(K_n \# K_n)$ spanned by the staircase generators. Let the intersection of this subcomplex with $A_s^-$ be denoted $(A_s^-)'$, and define $(B_s^-)'$ similarly. (See the right of Figure~\ref{fig:4.1}.) This gives an obvious subcomplex $(A_s^-)' \oplus (A_s^-)' \rightarrow (B_s^-)'$ of $A_s^- \oplus A_s^- \rightarrow B_s^-$. The inclusion and projection maps for this subcomplex are easily checked to be local equivalences, so without loss of generality we may replace the original complex $A_s^- \oplus A_s^- \rightarrow B_s^-$ with $(A_s^-)' \oplus (A_s^-)' \rightarrow (B_s^-)'$. 

An examination of Figure~\ref{fig:4.1} shows that $H_*((A_s^-)')$ and $H_*((B_s^-)')$ are both copies of $\F[U]$. Thus, the associated graded complex 
\[
H_*((A_s^-)') \oplus H_*((A_s^-)') \rightarrow H^*((B_s^-)')
\]
with the induced map $v_* \oplus v_*$ is certainly isomorphic to a grading-shifted copy of $\smash{X_{V_s(K_n \# K_n)}^\vee}$. It is moreover easy to show that in this case, the associated graded complex is $\iota$-equivariantly homotopy equivalent to the original. Hence we obain the desired local equivalence
\[
X_{V_s(K_n \# K_n)}^\vee[-d(C_n)] \simeq C_n. 
\]

An examination of Figure~\ref{fig:4.1} shows
\[
V_s(K_n \# K_n) = \lceil (\tau(K_n \# K_n) - s)/2 \rceil = \tau(K_n) - \lfloor s/2 \rfloor
\] 
so long as the right-hand side is positive, and $V_s(K_n \# K_n) = 0$ otherwise. (Consider the copy of the staircase on the right of Figure~\ref{fig:4.1} which intersects the left-half plane in a single dot. Then $V_s(K_n \# K_n)$ is the number of diagonal translations needed for this staircase to intersect the shaded region.) Some numerological casework shows that 
\[
\lfloor s/2 \rfloor = \lfloor [p/(2q)]/2 \rfloor = \lfloor(\lfloor p/q \rfloor + 1)/4 \rfloor.
\]
The hypotheses of the theorem thus imply that the $\iota$-complexes of the $\smash{S^{3}_{p/q}(K_{n} \# K_{n})}$ are locally equivalent to grading-shifted copies of $X_i^\vee$, with $i$ positive and distinct. As these are linearly independent in $\Inv$, this completes the proof.

We now turn to the case when $q$ is odd. By Theorem~\ref{thm:2.13}, the $\iota$-complex of $\smash{S^{3}_{p/q}(K_{n} \# K_{n})}$ is locally equivalent to the large surgery complex $(A_0^-(K_n \# K_n), \iota_K)$. We attempt to understand $A_0^-(K_n \# K_n)$ explicitly. Much of this computation follows from \cite[Section 8]{HM}, so we will be brief.

In \cite[Section 8]{HM}, Hendricks and Manolescu calculate the $\iota_K$-complex of all thin knots. Their result shows that up to local equivalence, the $\iota_K$-complex of a thin knot is locally equivalent either to a staircase or a staircase plus a single side-length-one box. These possibilities are displayed in Figure~\ref{fig:4.2}; note that we now assume $\tau(K_n \# K_n)$ is negative. In the former case, the action of $\iota_K$ is the obvious reflection map on the staircase generators. In the latter, we have the slight modification (in the notation of Figure~\ref{fig:4.2}):
\[
\iota_K d = d + b, \quad \iota_K b = b + e, \quad \iota_K c = c' + a', \quad \text{and} \quad \iota_K c' = c + a,
\]
with $\iota_K$ acting by reflection on all other generators. We abuse notation and write $A_0^-$ for this simplified representative of the local equivalence class of $A_0^-(K_n \# K_n)$.

\begin{figure}[h!]
\center
\includegraphics[scale=1.2]{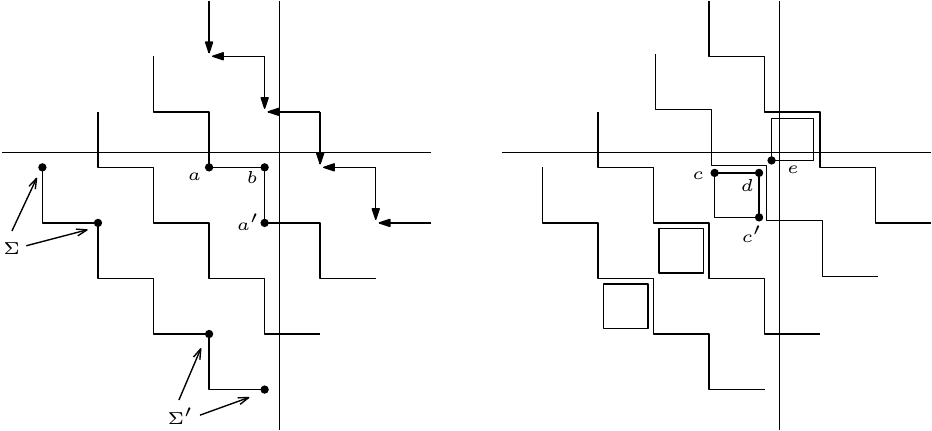}
\caption{Left: staircase with no box, with generators $a$, $a'$ and $b$ labeled. We have also labeled sums-of-generators $\Sigma$ and $\Sigma'$. To define these, consider the first copy of the staircase contained in the lower-left quadrant. Note that there are an odd number of non-cycle generators in this staircase. Let $\Sigma$ be the sum of such generators in the (strict) upper-half of this staircase and $\Sigma'$ be the reflection of $\Sigma$. Right: staircase with a single box; several further generators labeled. (The generators labels on the left are meant to carry over in the obvious way.)}\label{fig:4.2}
\end{figure}

We show that for a staircase with no box, $(A_0^-, \iota_K)$ is homotopy equivalent to (a grading-shifted copy of) $\smash{X_{|\tau(K_n)|}^\vee}$. For this, consider the subcomplex $S$ of $A_0^-$ spanned over $\F[U]$ by $a$, $a'$, and $b$, together with the sums-of-generators $\Sigma$ and $\Sigma'$. The reader may check that this is a $\iota_K$-equivariant subcomplex of $A_0^-$ which is homotopy equivalent to the original. Moreover, we claim that $S$ is homotopy equivalent to the complex on the left in Figure~\ref{fig:4.3}. One direction of this homotopy equivalence is given by the map
\[
f(x_0) = \Sigma + U^{|\tau(K_n)|} b + \Sigma', \quad f(x_1) = \Sigma, \quad \text{and} \quad f(x_2) = a.
\]
This does not intertwine $\iota_K$ with the $\iota$-action in Figure~\ref{fig:4.3} on the nose, but if we set
\[
H(x_0) = 0, \quad H(x_1) = 0, \quad \text{and} \quad H(x_2) = b
\]
then $f\iota + \iota_Kf = \partial H + H \partial$. We leave it to the reader to produce the homotopy equivalence in the other direction. A quick change-of-basis shows that up to grading shift, the left-hand complex in Figure~\ref{fig:4.3} is precisely $\smash{X_{|\tau(K_n)|}^\vee}$, giving the claim. For further discussion, see \cite[Example 2.6]{DHSTcobord}.

\begin{figure}[h!]
\center
\includegraphics[scale=1.3]{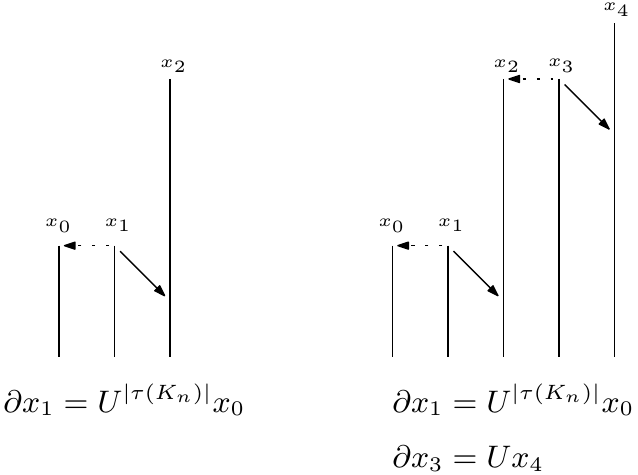}
\caption{Left: an $\iota$-complex with three generators. The dashed arrow represents the action of $\omega = 1 + \iota$; here $\omega x_1 = x_0$ and otherwise vanishes. The solid arrow represents $\partial$; here $\partial x_1 = U^{|\tau(K_n)|} x_2$ and otherwise vanishes. Right: an $\iota$-complex with five generators. Here $\omega x_1 = x_0$ and $\omega x_3 = x_2$. The differential is given by $\partial x_1 = U^{|\tau(K_n)|} x_2$ and $\partial x_3 = U x_4$.}\label{fig:4.3}
\end{figure}

We now turn to understanding the case of a staircase with box. In this case, it turns out that the $(A_0^-, \iota_K)$ is not locally equivalent to a copy of $\smash{X_i^\vee}$. However, it is still possible to understand its local equivalence class. To see, this we modify our subcomplex $S$ from before by additionally including the generators $c$, $c'$, $d$, and $e$, as displayed on the right in Figure~\ref{fig:4.3}. Once again, the reader can check that $S$ is a subcomplex of $A_0^-$ which is homotopy equivalent to the original. We further claim that $S$ is homotopy equivalent to the complex on the right in Figure~\ref{fig:4.3}. To see this, map
\[
f(x_0) = \Sigma + U^{|\tau(K_n)|} b + \Sigma', \quad f(x_1) = \Sigma, \quad f(x_2) = a, \quad f(x_3) = c', \quad \text{and} \quad f(x_4) = e.
\]
Setting
\[
H(x_0) = U^{|\tau(K_n)|-1} c, \quad H(x_1) = 0, \quad H(x_2) = b, \quad H(x_3) = d, \quad \text{and} \quad H(x_4) = 0,
\]
exhaustive evaluation on each generator gives $f\iota + \iota_Kf = \partial H + H \partial$. We leave it to the reader to produce the map in the other direction.

After a change-of-basis, the complex on the right of Figure~\ref{fig:4.3} is locally equivalent (up to grading shift) to
\[
X_{|\tau(K_n)|}^\vee \otimes X_1^\vee.
\]
The relevant computation follows from \cite[Lemma 5.2]{DaiStoffregen}; see also \cite[Theorem 8.1]{DHSTcobord}. Thus, for each $n$, we have that the $\iota$-complex of $\smash{S^{3}_{p/q}(K_{n} \# K_{n})}$ is locally equivalent to a grading-shifted copy of either $\smash{X_{|\tau(K_n)|}^\vee}$ or $\smash{X_{|\tau(K_n)|}^\vee \otimes X_1^\vee}$. Since the $\tau(K_n)$ are distinct, this gives the claim. 

Finally, we may upgrade the statement of linear independence to the spanning of a $\Z^\infty$-summand using the work of \cite{DHSTcobord}. In \cite{DHSTcobord}, the authors construct an infinite family of linearly independent homomorphisms from $\Theta^3_{\Z_2}$ to $\Z$, factoring through the homomorphism $h \colon \Theta^3_{\Z_2} \rightarrow \Inv$. More precisely, they construct an algebraically-defined auxiliary group $\smash{\widehat{\Inv}}$ with a homomorphism $\smash{\widehat{h} \colon \Inv \rightarrow \widehat{\Inv}}$ and define a family of linearly independent homomorphisms 
\[
\smash{\phi_n \colon \widehat{\Inv} \rightarrow \Z}. 
\]
Composing everything with the double branched cover homomorphism, we obtain a linearly independent family of homomorphisms
\[
\cC \xrightarrow{\Sigma_2} \Theta^3_{\Z_2} \xrightarrow{h} \Inv \xrightarrow{\widehat{h}} \widehat{\Inv} \xrightarrow{\phi_n} \Z.
\]
Moreover, in \cite{DHSTcobord} it is shown that $\phi_i(\smash{\widehat{h}}(X_j)) = \delta_{ij}$, where $\delta_{ij}$ is the Kronecker delta. This suffices to establish the claim.
\end{proof}

\section{Thin knots and L-space knots}\label{sec:5}

We now turn to some applications and examples of Theorems~\ref{thm:1.7} and \ref{thm:1.9}. The first order of business is to understand the general condition $V_0(K_n) - V_0(-K_n) \rightarrow \infty$ of Theorem~\ref{thm:1.9}. In practice, $V_0(-K_n)$ may often be known to be bounded, or even zero; for example, if $K$ is a positive L-space knot, or $K$ is a thin knot with $\tau(K) > 0$. Moreover, if $K_1$ and $K_2$ have $V_0(-K_1) = V_0(-K_2) = 0$, then by the sub-additivity of $V_0$ their connected sum has this property also. Thus, if we assume that the family $\{K_n\}_{n \in \N}$ is drawn from the monoid generated by positive L-space knots, or the monoid of thin knots with $\tau(K) > 0$, then the hypothesis of Theorem~\ref{thm:1.9} simplifies to $V_0(K_n) \rightarrow \infty$.

It is also natural to search for related conditions which do not explicitly reference any Floer-theoretic invariants. For the class of thin knots, this is straightforward: if $K$ is thin, then $V_0(K) = \max\{0, \lceil \tau(K)/2 \rceil\}$. Moreover, for all known examples of thin knots (including all alternating and quasi-alternating knots), we have $\tau(K) = - \sigma(K)/2$. In this case we may thus re-write the hypothesis of Theorem~\ref{thm:1.9} as $\sigma(K_n) \rightarrow - \infty$.

For L-space knots, finding a topological condition is slightly more involved. We introduce the following two (rather trivial) lemmas: 

\begin{lemma}\label{lem:5.1}
Let $K$ be a positive L-space knot and $n$ be the number of non-zero terms in the Alexander polynomial $\Delta_K$. Then 
\[
V_0(K) \geq \lfloor (n-1)/4 \rfloor.
\]
\end{lemma}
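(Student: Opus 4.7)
The plan is to combine the explicit formula for $V_0$ of a positive L-space knot with an elementary grouping argument on the exponents of the Alexander polynomial. The only nontrivial input is the Ozsv\'ath--Szab\'o torsion-coefficient formula; once that is in hand, the rest is a few lines of arithmetic.

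First I would recall the structure of the Alexander polynomial of a positive L-space knot. By Ozsv\'ath--Szab\'o, the symmetrized polynomial takes the form
\[
\Delta_K(t) = \sum_{i=0}^{n-1} (-1)^i t^{b_i}
\]
with strictly decreasing integer exponents $b_0 > b_1 > \cdots > b_{n-1}$ satisfying $b_i + b_{n-1-i} = 0$. This forces $n$ to be odd; writing $n = 2k+1$, we have $b_k = 0$, the positive exponents are $b_0 > b_1 > \cdots > b_{k-1} > 0$, and the sign pattern on these positive exponents is $+, -, +, -, \ldots$ Note that $\lfloor (n-1)/4 \rfloor = \lfloor k/2 \rfloor$, so the goal is to show $V_0(K) \geq \lfloor k/2 \rfloor$.

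Next, I would invoke the standard identification of $V_0$ with the torsion coefficient of the Alexander polynomial for L-space knots (due to Ozsv\'ath--Szab\'o, also stated by Ni--Wu):
\[
V_0(K) = \sum_{j \geq 1} j \cdot a_j,
\]
where $a_j$ is the coefficient of $t^j$ in the symmetrized Alexander polynomial. Substituting the staircase form above, only $j \in \{b_0, \ldots, b_{k-1}\}$ contributes to the sum, yielding
\[
V_0(K) = \sum_{i=0}^{k-1} (-1)^i b_i.
\]

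Finally, I would bound this alternating sum by grouping consecutive pairs as $(b_0 - b_1) + (b_2 - b_3) + \cdots$. Each parenthesized difference is a positive integer, hence at least $1$. If $k = 2m$ is even, this produces $m$ pairs and gives $V_0(K) \geq m = k/2$; if $k = 2m+1$ is odd, there are $m$ pairs together with a trailing term $b_{2m} \geq 1$, giving $V_0(K) \geq m+1$. In either case $V_0(K) \geq \lfloor k/2 \rfloor = \lfloor (n-1)/4 \rfloor$, as claimed. I do not anticipate any real obstacle in executing this plan: it relies only on a standard formula and on the observation that consecutive drops of a strictly decreasing integer sequence are at least $1$.
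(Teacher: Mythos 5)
Your proposal is correct and follows essentially the same route as the paper: both reduce $V_0(K)$ to the alternating sum of the positive Alexander exponents (you derive this from the torsion-coefficient identity $V_0=\sum_{j\ge 1} j\,a_j$, while the paper cites the alternating-sum formula directly from \cite[Section 7]{HM}), and both then bound that sum by grouping consecutive differences, each at least $1$.
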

\begin{proof}
As is well-known to experts in Floer theory, it is straightforward to determine the knot Floer complex of $K$ (and thus the value of $V_0$) from $\Delta_K$ in the case that $K$ is an L-space knot \cite{OSlens}. Let
\[
\Delta_K = (-1)^m + \sum_{i = 1}^m (-1)^{m-i} (t^{n_i} + t^{-n_i})
\]
for $0 < n_1 < n_2 < \cdots < n_m$. Then (as is recorded in \cite[Section 7]{HM}), 
\[
V_0(K) = n_m - n_{m-1} + \cdots + (-1)^{m-2}n_2 + (-1)^{m-1}n_1.
\]
Since each pair $n_k - n_{k-1}$ is at least one, we of course have $V_0(K) \geq \lfloor m/2 \rfloor = \lfloor (n-1)/4 \rfloor$.
\end{proof}

\begin{lemma}\label{lem:5.2}
Let $K_1, \ldots, K_n$ be any collection of positive L-space knots. Then 
\[
V_0(K_1 \# \cdots \# K_n) \geq \max\{ \left[ n/2 \right], V_0(K_1), \ldots, V_0(K_n) \}.
\]
\end{lemma}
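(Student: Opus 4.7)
For the first inequality, $V_0(K_1 \# \cdots \# K_n) \geq V_0(K_i)$, the plan is to combine subadditivity of $V_0$ under connected sum with the vanishing $V_0(-K_j) = 0$ for any positive L-space knot $K_j$. This vanishing holds because $-K_j$ is a negative L-space knot, so $d(S^3_{+1}(-K_j)) = 0$, forcing $V_0(-K_j) = 0$. Writing $K_i$ as a concordance sum of $K_1 \# \cdots \# K_n$ with each $-K_j$ for $j \neq i$, subadditivity then yields
\[
V_0(K_i) \leq V_0(K_1 \# \cdots \# K_n) + \sum_{j \neq i} V_0(-K_j) = V_0(K_1 \# \cdots \# K_n).
\]

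For the second inequality, $V_0(K_1 \# \cdots \# K_n) \geq [n/2]$, I would analyze the tensor product of staircase complexes directly. Each positive L-space knot $K_i$ has $\CFK^\infty(K_i)$ described by a staircase with ``peak'' cycle generators at lattice points $(a, b)$ forming a path from $(0, g_i)$ to $(g_i, 0)$. All peaks represent the generator of the top $\F[U]$-tower in $H_*(\CFK^\infty(K_i))$, so by K\"unneth any tensor product of one peak per factor represents the top class in the connected sum. A representative $x_1 \otimes \cdots \otimes x_n$ with $x_i$ at $(a_i, b_i)$ requires a $U$-shift of precisely $\max(\sum_i a_i, \sum_i b_i)$ to land in $A_0^-$. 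One then argues, by inspecting the tensor-product differentials, that no alternative representative in $A_0^-$ achieves a higher Maslov grading, establishing
\[
V_0(K_1 \# \cdots \# K_n) \;=\; \min_{\text{peaks}} \max\!\left( \sum_i a_i,\, \sum_i b_i \right).
\]

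Applying $\max(x, y) \geq (x + y)/2$ to this formula, together with the observation that every peak $(a, b)$ of a non-trivial positive L-space knot satisfies $a + b \geq 1$---the extremal peaks $(0, g_i)$ and $(g_i, 0)$ have $a + b = g_i \geq 1$, and any interior peak has $a, b \geq 1$---yields $V_0 \geq n/2$. Integrality then gives $V_0 \geq \lceil n/2 \rceil \geq [n/2]$. The principal obstacle is justifying the identity $V_0 = \min_{\text{peaks}} \max(\sum a_i, \sum b_i)$; this amounts to verifying that boundary modifications in the tensor product staircase cannot yield cycles in $A_0^-$ of higher Maslov grading than the peak-tensor representatives. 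I expect this to require a careful bookkeeping of the staircase tensor-product structure, in the spirit of standard computations for tensor products of L-space staircases.
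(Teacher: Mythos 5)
Your proposal follows essentially the same route as the paper's proof: the first inequality via subadditivity of $V_0$ combined with $V_0(-K_j)=0$ for positive L-space knots, and the second inequality via the formula
\[
V_0(K_1 \# \cdots \# K_n) = \min \max\!\left(\sum a_i, \sum b_i\right)
\]
over tensor products of Maslov-grading-zero staircase generators, followed by the observation that each such generator contributes at least $1$ to $a_i + b_i$. Your route to the final bound, using $\max(x,y) \geq (x+y)/2$ and $a_i+b_i\geq1$, is marginally cleaner than the paper's partitioning argument (which splits the generators according to whether the $i$-coordinate vanishes) but is otherwise the same idea, and the first inequality is word-for-word the paper's argument.

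The one real gap is exactly the step you flag yourself: the identity $V_0 = \min_{\text{peaks}}\max(\sum a_i, \sum b_i)$ is not something you should be reproving from scratch. The paper handles it by citing \cite[Proposition 5.1]{Borodzik} (Borodzik--Livingston), where the computation of $V_0$ for connected sums of positive L-space knots is carried out. If you want a self-contained argument, be aware that the bookkeeping is nontrivial --- one must show that the minimum is achieved on the ``vertex set'' generators and that no off-diagonal cycle in the tensor product does better --- and that this is precisely what Borodzik--Livingston prove; reinventing it here would be considerable and unnecessary work. A smaller point: your justification that $V_0(-K_j)=0$ via $d(S^3_{+1}(-K_j))=0$ has the logic slightly inverted (you'd want to compute $d$ from $V_0$, not vice versa); the cleaner statement is simply that for a positive L-space knot the dual staircase $\CFK^\infty(-K_j)$ has a cycle generator at filtration level $(0,0)$, forcing $V_0(-K_j)=0$.
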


\begin{proof}
It is easy to check that 
\[
V_0(K_1 \# \cdots \# K_n) \geq V_0(K_i)
\]
for each $i$. Indeed, by sub-additivity, 
\[
V_0(K_i) - V_0(-K_1 \# \cdots \# -K_{i-1} \# -K_{i+1} \# \cdots \# -K_n) \leq V_0(K_1 \# \cdots \# K_n),
\]
but the second term is zero. The claim that $V_0(K_1 \# \cdots \# K_n)$ is at least $\lfloor n/2 \rfloor$ follows from work of Borodzik and Livingston \cite{Borodzik}. Explicitly, in \cite{Borodzik} it is shown how to calculate the $V_0$-invariant of a connected sum of positive L-space knots. We briefly recall their formulation. For each $l$, place the staircase complex $C_l$ corresponding to $K_l$ in the first quadrant. Let $S$ represent the set of all generators of the form $a_1 \otimes a_2 \otimes \cdots \otimes a_n \in C_1 \otimes C_2 \otimes \cdots \otimes C_n$, where $a_l$ vary among the generators of $C_l$ with Maslov grading zero. Let $\underline{S}$ represent the set pairs of integers obtained from the $(i,j)$-grading of the generators in $S$. From \cite[Proposition 5.1]{Borodzik} we have:
\[
V_0(K_1 \# K_2 \# \cdots \# K_n)= {\underset{(\alpha,\beta) \in \underline{S}}{\mathrm{min}}} \{ \mathrm{max} (\alpha, \beta) \}.
\]
Here, $\mathrm{max} (\alpha, \beta)$ represents the maximum of the two coordinates. Let us now take an arbitrary element $a_1 \otimes a_2 \otimes \cdots \otimes a_{n} \in S$ and assume that the $(i,j)$-coordinates of $a_l$ is $(x_l,y_l)$. Lastly, let $k$ be the number of times the $i$-coordinate of $(x_l,y_l)$ is $0$. Now observe that
\[
\sum_{l=1}^{n} x_{l} \geq n-k \quad \mathrm{and} \quad \sum_{l=1}^{n} y_{l} \geq k.
\]
Hence, we obtain
\[
\mathrm{max}\left\{ \sum_{l=1}^{n} x_{l}, \sum_{l=1}^{n} y_{l} \right\} \geq \left[ \frac{n}{2} \right].
\]
Since the choice of the element $a_1 \otimes a_2 \otimes \cdots \otimes a_{n} \in S$ was arbitrary, we obtain,
\[ 
V_0(K_1 \# K_2 \# \cdots \# K_n) \geq \left[ n/2 \right].
\]
completing the proof. 
\end{proof}

Now suppose the family $\{K_n\}_{n \in \N}$ in Theorem~\ref{thm:1.9} is drawn from the monoid generated by positive L-space knots. By Lemmas~\ref{lem:5.1} and \ref{lem:5.2}, it follows that $\{P(K_n)\}_{n \in \N}$ has infinite rank so long as either:
\begin{enumerate}
\item\label{alt:a} The number of summands in $K_n$ is unbounded as $n \rightarrow \infty$; or,
\item\label{alt:b} The set of Alexander polynomial lengths (that is, number of non-zero terms in each Alexander polynomial) occurring amongst summands of the $K_n$ is unbounded as $n \rightarrow \infty$.
\end{enumerate}
This leads to the following:

\begin{corollary}\label{cor:5.3}
Let $P$ be a proper rational unknotting number one pattern with $\ell \neq 0$. Let $\mathcal{M}$ be the monoid of positive linear combinations of (right-handed) torus knots. If $\{K_n\}_{n \in \N}$ is any infinite subset of $\mathcal{M}$, then $\{P(K_{n})\}_{n \in \N}$ has infinite rank.
\end{corollary}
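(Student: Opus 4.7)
The plan is to apply Theorem~\ref{thm:1.9} to (a subsequence of) the family $\{K_n\}_{n \in \N}$. After handling the sign of $p/q$---as in the proof of Theorem~\ref{thm:1.7}, by replacing $P$ with $-P$ if necessary and using that $x \mapsto -x$ preserves the rank of a subset of $\cC$---the task will reduce to verifying $V_0(K_n) - V_0(-K_n) \to \infty$ along an infinite subsequence, under the assumption $p/q > 0$.

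First I would dispose of the $V_0(-K_n)$ term. Since each $K_n$ is a positive integer combination of right-handed torus knots, $-K_n$ is a positive sum of left-handed torus knots, each of which is a negative L-space knot with $V_0 = 0$. Subadditivity of $V_0$ combined with its nonnegativity then gives $V_0(-K_n) = 0$ for every $n$.

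The heart of the argument is a pigeonhole reduction to show that $V_0(K_n)$ is unbounded along a subsequence. Write $K_n = a_{n,1} T_{p_{n,1}, q_{n,1}} \# \cdots \# a_{n,m_n} T_{p_{n,m_n}, q_{n,m_n}}$ with each $a_{n,i} \geq 1$, and let $N_n = \sum_i a_{n,i}$ be the total number of torus-knot summands counted with multiplicity. By Lemma~\ref{lem:5.2}, $V_0(K_n) \geq \lfloor N_n / 2 \rfloor$ and $V_0(K_n) \geq V_0(T_{p_{n,i}, q_{n,i}})$ for each $i$. Suppose toward contradiction that $V_0(K_n)$ is uniformly bounded. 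Then $N_n$ is uniformly bounded, and by Lemma~\ref{lem:5.1} each Alexander polynomial $\Delta_{T_{p_{n,i}, q_{n,i}}}$ has boundedly many nonzero terms. Since only finitely many torus knots have Alexander polynomials with a bounded number of nonzero terms, the summands appearing in all the $K_n$ lie in a finite set, and the $K_n$ themselves therefore lie in a finite subset of $\mathcal{M}$---contradicting the infinitude of $\{K_n\}_{n \in \N}$.

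Once a subsequence on which $V_0(K_n) \to \infty$ is extracted, Theorem~\ref{thm:1.9} immediately produces infinite rank for the image of $P$ on that subsequence, and hence for $\{P(K_n)\}_{n \in \N}$. I expect the main obstacle to lie not in the Heegaard Floer input but in the sign reduction for negative $p/q$; the pigeonhole estimate itself is routine once one observes that only finitely many torus knots have Alexander polynomial of bounded length.
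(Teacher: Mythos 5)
Your argument is essentially the same as the paper's: reduce to Theorem~\ref{thm:1.9}, note that $V_0(-K_n)=0$ by subadditivity and nonnegativity, and then use Lemmas~\ref{lem:5.1} and~\ref{lem:5.2} to force $V_0(K_n)\to\infty$ along a subsequence. The only structural difference is presentation: you argue by contradiction/pigeonhole, whereas the paper argues directly that any infinite family in $\mathcal{M}$ must have either unboundedly many summands or summands of unboundedly long Alexander polynomial, and then invokes the two bulleted criteria stated just before the corollary. Two remarks are worth making.

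First, the sign reduction you propose in the opening paragraph does not actually work here, and you are right to be suspicious of it. Replacing $P$ by $-P$ converts the problem about $\{P(K_n)\}_{n\in\N}$ into one about $\{(-P)(-K_n)\}_{n\in\N}$ (up to overall mirror), so to apply Theorem~\ref{thm:1.9} to $-P$ you would need the hypothesis $V_0(-K_n)-V_0(K_n)\to\infty$, which fails identically since $V_0(-K_n)=0$ and $V_0\geq 0$. Unlike the situation in Theorem~\ref{thm:1.7}, the family $\mathcal{M}$ is not invariant under $K\mapsto -K$, so the trick of passing to the mirrored pattern forces mirrored companions that lie outside $\mathcal{M}$. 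The paper's own proof of Corollary~\ref{cor:5.3} makes no such reduction; the corollary should be read as carrying the standing hypothesis $p/q>0$ from the Section~5 discussion (everything in that section is framed as supplying input for Theorem~\ref{thm:1.9}). You should simply drop the sign step and note this implicit positivity assumption.

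Second, the assertion that ``only finitely many torus knots have Alexander polynomials with a bounded number of nonzero terms'' is true, but it is not automatic and is the one piece of genuinely new content in the paper's proof. The paper justifies it by citing a result of Song computing the number of nonzero coefficients of $\Delta_{T_{p,q}}$ as $vx+uy$ with $vx-uy=1$, $p=x+y$, $q=u+v$, from which the count is at least $\max(p,q)$. Without some such input your pigeonhole step is not closed. Everything else in your argument is correct and matches the paper.
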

\begin{proof}
Since $\mathcal{M}$ is a submonoid of the monoid generated by positive L-space knots, it suffices to show that any infinite family of elements of $\mathcal{M}$ must satisfy either (\ref{alt:a}) or (\ref{alt:b}) above. Suppose $\{K_n\}_{n \in \N}$ does not satisfy (\ref{alt:a}). Then the set of distinct individual torus knots $T_{p, q}$ that appear as summands in the $K_n$ must be infinite, and in particular have unbounded indices. (That is, either $p$ is unbounded or $q$ is unbounded.)

In \cite{Song}, it is shown that the number of non-zero terms in $\smash{\Delta_{T_{p,q}}}$ is given by $vx + uy$, where $x$, $y$, $u$, and $v$ are positive integers such that $vx - uy = 1$, $p = x + y$, and $q = u + v$. Since $u$ and $v$ are at least one, we certainly have $vx + uy \geq x + y = p$. Similarly, $vx + uy \geq v + u = q$. Hence any infinite family of torus knots has unbounded Alexander polynomial length (as measured by number of non-zero terms).
\end{proof}

For the patterns considered in \cite{HPC}, it follows from the proof \cite[Theorem 3]{HPC} that any infinite family of distinct torus knots has infinite-rank image. In Corollary~\ref{cor:5.3}, we extend this by allowing the family of companion knots to be drawn from sums of torus knots, rather than individual torus knots. 

We now prove the applicability of Theorem~\ref{thm:1.7} to the following three families of knots discussed in the introduction:

\begin{enumerate}
\item $K$ is any L-space knot, such as a torus knot or algebraic knot, or any linear combination of such knots of the same sign/handedness;
\item $K$ is any thin knot with $\tau(K) \neq 0$, such as a (quasi-)alternating knot of non-zero signature;
\item $K$ is any linear combination of genus one knots such that the overall connected sum satisfies $\tau(K) \neq 0$.
\end{enumerate}
The first and second families are immediate from the discussion of this section; note that if $K$ is a positive L-space knot, then $V_0(nK) \geq \left[ n/2 \right]$ by Lemma~\ref{lem:5.2}. Thus the only nontrivial case is the third claim.

Using the concordance invariant $\nu^+$ introduced by Hom and Wu in \cite{HW}, Hom \cite{Homsurvey} and Kim and Park \cite{KP} defined an equivalence relation on the set of knot Floer complexes called \textit{$\nu^+$-equivalence}. The $\nu^+$-equivalence class of a knot is a concordance invariant which is well-defined with respect to connected sums/tensor products. Moreover, the numerical invariants $V_0(K)$ and $\tau(K)$ may both be computed from the $\nu^+$-equivalence class of $\CFK(K)$. (Note that $\nu^+$-equivalence is the same as the stable equivalence of \cite{Homsurvey}.) In \cite{Sato}, Sato determined the $\nu^+$-equivalence class of all genus one knots. According to \cite[Theorem 1.2]{Sato}, if $K$ is a genus one knot, then
\[
\CFK^\infty(K) \sim_{\nu^+} 
\begin{cases}
                        \CFK^\infty(T_{2,3}) \quad &\text{if $\tau(K) = 1$} \\
                        \CFK^\infty(U) \quad &\text{if $\tau(K) = 0$} \\
                        \CFK^\infty(-T_{2,3}) \quad &\text{if $\tau(K) = -1$}.
\end{cases}
\]
This implies that if $K$ is a linear combination of genus one knots, then $\CFK^\infty(K) \sim_{\nu^+} cT_{2,3}$, where $c = \tau(K)$. Hence for the purposes of calculation $V_0(nK)$, we may assume that $K$ is $cT_{2,3}$. But $cT_{2,3}$ is a thin knot, for which we have already established the desired claim.

\section{Whitehead doubles}\label{sec:6}
We close by proving the applications to Whitehead doubles mentioned in the introduction. Firstly, note that Corollary~\ref{cor:A} is an immediate consequence of Theorem~\ref{thm:1.10}:

\begin{proof}[Proof of Corollary~\ref{cor:A}]
We have $\tau(nT_{2,2k+1}) = nk$; applying Theorem~\ref{thm:1.10} gives the claim.
\end{proof}

We thus turn to Corollary~\ref{cor:B}. This is a particularly simple case of the setup of Section~\ref{sec:3}, in the sense that  
\[
\Sigma_2(D(K)) = S^3_{1/2}(K \# K^r)
\]
for any $K$. The same application of Theorem~\ref{thm:2.13} as in the proof of Theorem~\ref{thm:3.1} shows that up to an overall grading shift, the $\iota$-complex $C$ of $\Sigma_2(D(K))$ thus satisfies 
\[
C^\vee \leq X_{V_0(K \# K^r)}.
\]
Note that here, $[p/(2q)] = [1/4] = 0$. In contrast to the proof of Theorem~\ref{thm:1.9}, we forego the negative-definite cobordism of Section~\ref{sec:3.2} and instead utilize $V_0(K \# K^r)$ directly.

\begin{proof}[Proof of Corollary~\ref{cor:B}]
This is a special case of Theorem~\ref{thm:3.1}. Let $\{K_n\}_{n \in \N}$ be any family of companion knots and let $C_n^\vee$ be the $\iota$-complex of $\Sigma_2(D(K_n))$. Up to overall grading shift, we have
\[
C_n^\vee \leq X_{V_0(K_n \# K_n^r)}
\]
for each $n$. If $V_0(K_n \# K_n^r) \rightarrow \infty$, then by Theorem~\ref{thm:2.12} there exists an infinite linearly independent subset of $\{D(K_n)\}_{n \in \N}$. It thus suffices to find a family of companion knots with $V_0(K_n \# K_n^r) \rightarrow \infty$ but each $\tau(K_n) \leq 0$. This is provided in Example~\ref{ex:6.3} below.
\end{proof}

The above discussion can also be used to answer a conjecture of the second author and Pinz\'on-Caicedo \cite{HPC}, who asked whether there is a knot $K$ such that the Whitehead doubles $\Wh(K)$ and $\Wh(-K)$ are non-zero in concordance. We prove the following general condition:

\begin{corollary}\label{cor:6.1}
Let $K$ be any knot with $V_0(K \# K^r) > 0$ and $\tau(K) < 0$. Then $\Wh(K)$ and $\Wh(-K)$ are linearly independent.
\end{corollary}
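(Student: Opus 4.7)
The plan is to show that any relation $a\Wh(K)+b\Wh(-K)=0$ in $\mathcal{C}$ forces $a=b=0$, by detecting the two coefficients with two different homomorphisms: the Ozsv\'ath--Szab\'o $\tau$-invariant will kill $b$, and the involutive local equivalence class of the branched double cover will kill $a$. Neither invariant alone distinguishes $\Wh(K)$ from zero under the given hypotheses, so both are genuinely needed.

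First I would compute $\tau(\Wh(K))$ and $\tau(\Wh(-K))$. The hypothesis $\tau(K)<0$ gives $\tau(-K)>0$, so by Hedden's theorem combined with the Hom--Sato stable equivalence result recalled in the introduction, $\tau(\Wh(K))=0$ (since $\tau(K)\leq 0$) while $\tau(\Wh(-K))=1$ (since $\tau(-K)>0$). Applying the homomorphism $\tau\colon\mathcal{C}\to\mathbb{Z}$ to the relation $a\Wh(K)+b\Wh(-K)=0$ yields $a\cdot 0+b\cdot 1=0$, so $b=0$.

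It then remains to show $\Wh(K)$ has infinite order in $\mathcal{C}$. For this I would pass through the composition $\mathcal{C}\xrightarrow{\Sigma_2}\Theta^3_{\mathbb{Z}_2}\xrightarrow{h}\mathcal{I}$ and show that every nonzero multiple of the resulting $\iota$-complex class is nonzero. This is exactly the $p=1$, $q=2$ instance of the argument in the proof of Corollary~\ref{cor:B}: letting $C^\vee$ be the $\iota$-complex of $\smash{\Sigma_2(\Wh(K))=S^3_{1/2}(K\# K^r)}$, Theorem~\ref{thm:2.13} and the local-map construction from the proof of Theorem~\ref{thm:3.1} give, up to an overall grading shift, $C^\vee \leq X_{V_0(K\# K^r)}$. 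The hypothesis $V_0(K\# K^r)>0$ ensures the right-hand side has length at least one, so the connected-homology argument recalled in the proof of Theorem~\ref{thm:2.12} (via \cite[Lemma 7.11]{DHM}) shows that $\Hconn(m C^\vee)$ contains a nontrivial $U$-torsion tower for every $m\neq 0$. In particular $mC^\vee\neq 0$ in $\mathcal{I}$ for all $m\neq 0$, and so $a=0$.

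The main conceptual obstacle is that neither invariant alone detects $\Wh(K)$: because $\tau(K)\leq 0$, the entire stable equivalence class of the knot Floer complex of $\Wh(K)$ is trivial, hence every one of the standard concordance invariants derived from $\CFK(\Wh(K))$ itself (including $\tau$, $V_0$, $\Upsilon$, $\varepsilon$) vanishes. It is precisely the involutive surgery formula applied to the branched double cover, which exports information from $\CFK(K\# K^r)$ rather than $\CFK(\Wh(K))$, that allows the comparison with $X_{V_0(K\# K^r)}$ and breaks this obstruction; the remainder of the argument is then a routine combination of homomorphism-chasing with the local-equivalence machinery already assembled in Section~\ref{sec:2}.
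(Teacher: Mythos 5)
Your proposal is correct and follows essentially the same approach as the paper: use Hedden's theorem to compute $\tau(\Wh(K))=0$ and $\tau(\Wh(-K))=1$, and use the involutive surgery formula applied to $\Sigma_2(\Wh(K))=S^3_{1/2}(K\#K^r)$ together with $V_0(K\#K^r)>0$ to show $\Wh(K)$ is nontorsion; the linear-independence conclusion is then exactly the homomorphism-chasing you describe. Your write-up merely makes explicit the "trivial application of the proof of Theorem~\ref{thm:2.12}" that the paper invokes in one line.
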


\begin{proof}[Proof of Corollary~\ref{cor:6.1}]
If $V_0(K \# K^r) > 0$, then a trivial application of the proof of Theorem~\ref{thm:2.12} shows that the $\iota$-complex $C$ of $\Sigma_2(D(K))$ is nontorsion. Hence $D(K)$ is nontorsion in $\cC$. Since $\tau(K) < 0$, \cite[Theorem 1.4]{Hedden} implies that $\tau(D(K)) = 0$ and $\tau(D(-K)) = 1$. As $\tau$ is a homomorphism, this shows that $D(-K)$ is also nontorsion in $\cC$ and that it is linearly independent with $D(K)$.
\end{proof}

We now give several infinite families of knots for which $V_0(K \# K^r) > 0$ and $\tau(K) \leq 0$. This condition turns out to be fairly common; we give a flexible recipe for constructing a wide class of examples below. Let $A$ and $B$ be a pair of knots such that
\[
V_0(2A) > V_0(2B) \quad \text{and} \quad \tau(A) \leq \tau(B).
\]
Then we claim that $K = A \# -B$ is a knot with the desired properties. To see this, first note that since (non-involutive) knot Floer homology is insensitive to orientation reversal, we may replace $K \# K^r$ with $2K$. Subadditivity of $V_0$ then gives the lower bound
\[
0 < V_0(2A) - V_0(2B) \leq V_0(2A \#- 2B) = V_0(2K),
\]
while $\tau(K) = \tau(A) - \tau(B) \leq 0$. The advantage of the ansatz $K = A \# -B$ is that if $A$ and $B$ are sums of positive L-space knots (or are locally equivalent to such sums), then the quantities $V_0(2A)$ and $V_0(2B)$ are easily computed via the algorithm of \cite[Proposition 5.1]{Borodzik} (described in the proof of Lemma~\ref{lem:5.2}).  

\begin{figure}[h!]
\center
\includegraphics[scale=1.2]{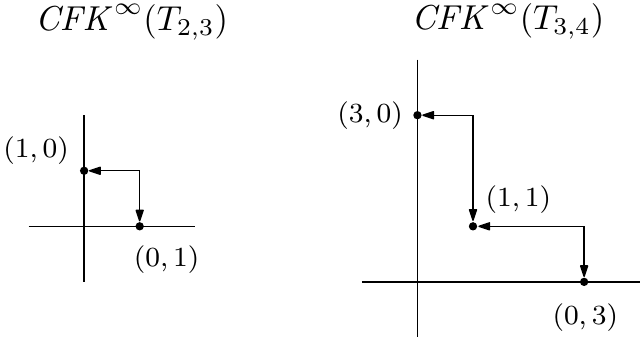}
\caption{The knot Floer complexes of $T_{2,3}$ and $T_{3,4}$.}\label{fig:6.1}
\end{figure}

\begin{example}\label{ex:6.2}
As a first example, we illustrate the above procedure for the example $K = 5T_{2,3} \# - 2T_{3,4}$ of Lewark and Zibrowius \cite[Corollary 1.13]{LZ}. In this case, $A = 5T_{2,3}$ and $B = 2T_{3,4}$. The knot Floer complexes of $T_{2,3}$ and $T_{3,4}$ are displayed in Figure~\ref{fig:6.1}. Applying the algorithm of \cite[Proposition 5.1]{Borodzik} easily shows that $V_0(2A) = V_0(10T_{2,3}) = 5$. (Alternatively, one can use the fact that $10T_{2,3}$ is thin.) Similarly, the algorithm of \cite{Borodzik} shows that $V_0(2B) = V_0(4T_{3,4}) = 4$. On the other hand, $\tau(A) = \tau(5T_{2,3}) = 5$ while $\tau(B) = \tau(2T_{3,4}) = 6$.
\end{example}

Many similar examples can be constructed by forming the difference of sums of torus knots in the style of Example~\ref{ex:6.2}; for instance, $\{n(n-1)T_{2,3} \# -2 T_{n, n +1}\}_{n \geq 2}$ or $\{T_{2, 2(n^2 - n -1) + 1} \# - T_{n, n+1}\}_{n \geq 2}$. We also provide an example of an infinite family where the companion knots are topologically slice:

\begin{figure}[h!]
\center
\includegraphics[scale=1.2]{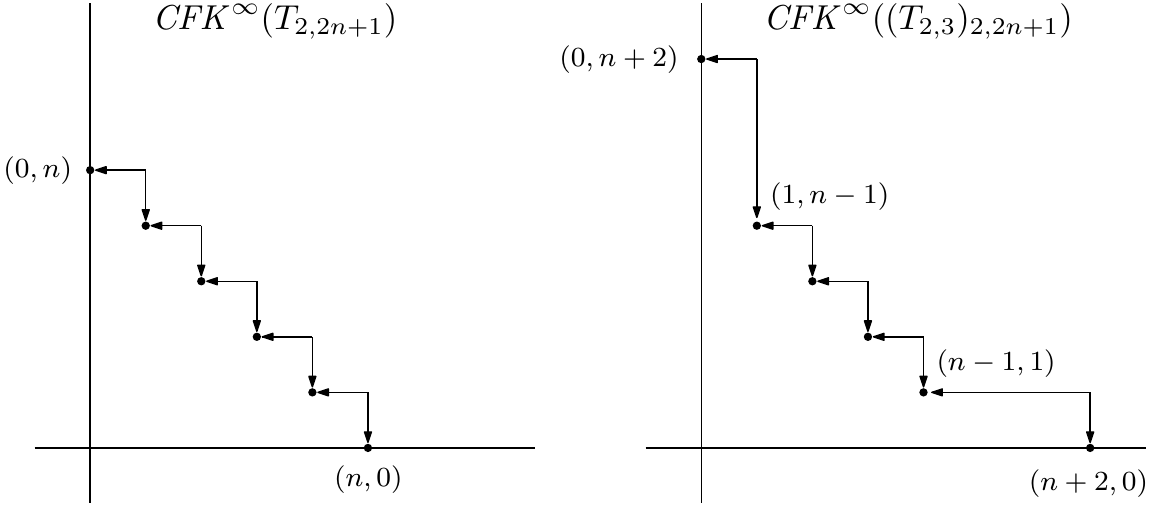}
\caption{The knot Floer complexes of $T_{2, 2n+1}$ and $(T_{2,3})_{2, 2n+1}$. The longer vertical arrows on the right have length three, while all other arrows have length one; note that $n \geq 2$. Here we have drawn the case of $n = 5$. }\label{fig:6.2}
\end{figure}

\begin{example}\label{ex:6.3}
Let $D = D(T_{2,3})$ be the Whitehead double of $T_{2,3}$. For any $n \geq 2$ and non-negative integers $p$ and $q$, let
\[
K_{n, p, q} = p(T_{2, 2n+1} - D_{2, 2n+1}) + q D.
\]
We claim that $K_{n, p, q}$ is topologically slice. To see this, note that $D$ is topologically concordant to the unknot; hence its $(2, 2n+1)$-cable $D_{2, 2n+1}$ is topologically concordant to $T_{2, 2n+1}$. Thus both of the summands $p(T_{2, 2n+1} - D_{2, 2n+1})$ and $qD$ above are topologically slice.

We fit the family $K_{n, p, q}$ into our ansatz by setting
\[
A = pT_{2, 2n+1} + qD \quad \text{and} \quad B = pD_{2,2n+1}.
\]
Although the knots $D$ and $D_{2,2n+1}$ are somewhat complicated, their knot Floer complexes are locally equivalent to L-space complexes. Indeed, \cite[Theorem 1.2]{Sato} implies that for the purposes of computing $V_0$, we may replace $D$ with $T_{2,3}$, both in $A$ and in the cable $D_{2,2n+1}$. (For an explanation of the latter, see \cite[Proposition 4]{Homconcordance}.) The complexes of $T_{2, 2n+1}$ and $(T_{2,3})_{2, 2n+1}$ are displayed in Figure~\ref{fig:6.2}. To compute the complex of $(T_{2,3})_{2, 2n+1}$, we use \cite[Theorem 1.10]{hedden2009knot}, which implies that $(T_{2,3})_{2, 2n+1}$ is an L-space knot. The computation then follows from the behavior of the Alexander polynomial under cabling.

Applying the algorithm of \cite[Proposition 5.1]{Borodzik} shows that
\[
V_0(2A) = V_0(2pT_{2, 2n+1} + 2qT_{2,3}) = pn + q \quad \text{and} \quad V_0(2B) = V_0(2p(T_{2,3})_{2, 2n+1}) = pn.
\]
Hence $V_0(2A) - V_0(2B) = q$. On the other hand,
\[
\tau(A) = \tau(pT_{2, 2n+1} + qT_{2,3}) = pn + q \quad \text{and} \quad \tau(B) = \tau(p(T_{2,3})_{2, 2n+1}) = p(n + 2),
\]
showing that $\tau(K_1) - \tau(K_2) = q - 2p$. We may thus choose any infinite family of $(p, q)$ with $0 < q \leq 2p$ and $q$ unbounded to guarantee an infinite linearly independent subset and complete the proof of Corollary~\ref{cor:B}. If strict inequality $q < 2p$ holds, then $D(K_{n, p, q})$ and $D(-K_{n, p, q})$ are moreover linearly independent in each case. Note that if $K_{n, p, q}$ satisfies the above properties, then any positive multiple of $K_{n, p, q}$ does as well; hence we obtain rank-expansion along such $K_{n, p, q}$.
\end{example}

\bibliographystyle{amsalpha}
\bibliography{bib}

\end{document}